\numberwithin{equation}{section}
\newtheorem{thm}{Theorem}[section]
\newtheorem{lem}[thm]{Lemma}
\newtheorem{proposition}[thm]{Proposition}
\newtheorem{corollary}[thm]{Corollary}
\theoremstyle{definition}
\newtheorem{example}[thm]{Example}
\newtheorem{definition}[thm]{Definition}
\newtheorem{conj}[thm]{Conjecture}
\newtheorem{rmk}[thm]{Remark}
\newtheorem{question}[thm]{Question}
\newcommand{\bmlambda}{{\bm{\lambda}}}
\newcommand{\bmmu}{{\bm{\mu}}}
\newcommand{\bmnu}{{\bm{\nu}}}
\newcommand{\bmT}{{\bm{T}}}
\newcommand{\bmR}{{\bm{R}}}
\newcommand{\QQ}{{\mathbb {Q}}}
\DeclareMathOperator{\Set}{Set}
\DeclareMathOperator{\dd}{dd}
\DeclareMathOperator{\QSym}{QSym}
\DeclareMathOperator{\Frob}{Frob}
\DeclareMathOperator{\Inv}{Inv}
\DeclareMathOperator{\SSS}{SS}
\DeclareMathOperator{\iDes}{iDes}
\DeclareMathOperator{\Des}{Des}
\DeclareMathOperator{\dst}{dst}
\DeclareMathOperator{\maj}{maj}
\DeclareMathOperator{\arm}{arm}
\DeclareMathOperator{\leg}{leg}
\DeclareMathOperator{\inv}{inv}
\DeclareMathOperator{\dinv}{dinv}
\DeclareMathOperator{\I}{I}
\DeclareMathOperator{\des}{Des}
\DeclareMathOperator{\ssyt}{SSYT}
\DeclareMathOperator{\syt}{SYT}
\DeclareMathOperator{\llt}{LLT}
\DeclareMathOperator{\st}{st}
\DeclareMathOperator{\std}{std}
\DeclareMathOperator{\rw}{rw}
\DeclareMathOperator{\cycling}{cyc}
\newlength\cellsize \setlength\cellsize{12\unitlength}
\newcommand\cellify[1]{\def\thearg{#1}\def\nothing{}%
\ifx\thearg\nothing\vrule width0pt height\cellsize depth0pt%
  \else\hbox to 0pt{\usebox2\hss}\fi%
  \vbox to 12\unitlength{\vss\hbox to 12\unitlength{\hss$#1$\hss}\vss}}
\newcommand\tableau[1]{\vtop{\let\\=\cr
\setlength\baselineskip{-12000pt}
\setlength\lineskiplimit{12000pt}
\setlength\lineskip{0pt}
\halign{&\cellify{##}\cr#1\crcr}}}
\def\diag{ \begin{tikzpicture} \draw[dashed] (-.12,-.12) -- (.42, .42); \end{tikzpicture} }
\newcommand{\stat}{\operatorname{stat}}
\tikzset{
  >={Stealth[length=2mm, width=1.5mm]}
}
\newcommand\Matching[2]{%
  \draw [->, line width=0.3mm] 
    (#1,0) to [out=90, in=90, looseness=1.5] (#2,0); 
}
\title{Toward Butler's conjecture}
\author{Donghyun Kim}
\address{Department of Mathematical Sciences \\ Seoul National University \\Seoul 08826 \\ Republic of Korea}
\email{hyun920310@snu.ac.kr}
\author{Seung Jin Lee}
\address{Department of Mathematical Sciences \\ Research institute of Mathematics \\ Seoul National University \\ Seoul 08826 \\ Republic of Korea}
\email{lsjin@snu.ac.kr}
\author{Jaeseong Oh}
\address{Department of Mathematics \\ Sungkyunkwan University \\ Suwon \\ Korea}
\email{jaeseongoh@skku.edu}
\begin{document}
\maketitle
\begin{abstract}
The celebrated Haglund–Haiman–Loehr (HHL) formula provides an explicit monomial expansion of the Macdonald polynomials. In 1994, Butler introduced a refinement $\operatorname{I}_{\lambda,\mu}[X;q,t] = (T_\lambda\,\widetilde{H}_\mu[X;q,t] - T_\mu\,\widetilde{H}_\lambda[X;q,t]) / (T_\lambda - T_\mu)$ of the Macdonald polynomial and conjectured its Schur positivity. According to the Science Fiction conjecture by Bergeron and Garsia, this refinement represents the `intersection' of Macdonald polynomials. 

In this work, we introduce a novel combinatorial tool, the \emph{column exchange rule}, which enables us to derive a positive monomial expansion for Butler's symmetric function \(\text{I}_{\lambda,\mu}\), thereby refining the HHL formula. Additionally, we prove Butler's conjecture on the Schur positivity of \(\text{I}_{\lambda,\mu}\) in specific cases.

\end{abstract}

\textit{2020 Mathematics Subject Classification [MSC] codes:} 05E05, 05E10.

\section{Introduction}
\subsection{Overview}
In his seminal paper \cite{Mac88}, Macdonald introduced the \emph{Macdonald} \emph{$P$ polynomials} $P_\mu[X;q,t]$ which are $(q,t)$-extensions of Schur functions indexed by partitions $\mu$.  Macdonald polynomials specialize to important families of symmetric functions such as Jack symmetric functions and Hall--Littlewood polynomials. They have been widely researched and have numerous applications in representation theory, algebraic geometry, and mathematical physics, among others. Later, Garsia and Haiman introduced a variant of Macdonald polynomials called the \emph{modified Macdonald polynomial}, which is defined by the plethystic substitution of the \emph{Macdonald $J$ polynomial} of \cite{Mac88}:
\[
    \widetilde{H}_\mu[X;q,t] = t^{n(\mu)}J_\mu\left[\dfrac{X}{1-t^{-1}};q,t^{-1}\right].
\]
Since their appearance, Schur positivity of the modified Macdonald polynomials has been an active area of research. One of the possible attempts to prove Schur positivity was to construct an $\mathfrak{S}_n$-module whose Frobenius characteristic is the modified Macdonald polynomial.

In 1993, Garsia and Haiman introduced the $\mathfrak{S}_n$-module $V_\mu$, now called the \emph{Garsia--Haiman module}, for a partition $\mu\vdash n$. It is defined as the subspace of
\[
\mathbb{C}[x_1,\dots,x_n,y_1,\dots,y_n]
\]
spanned by the partial derivatives of the polynomial $\Delta_\mu$, which is analogous to the Vandermonde determinant:
\[
    \Delta_\mu:=\det\left[x_i^{p_j}y_i^{q_j}\right]_{i,j=1,\dots,n},
\]
where $c_j=(p_j,q_j)$ runs over the cells in $\mu$, and the symmetric group $\mathfrak{S}_n$ acts diagonally by permuting the $x$ and $y$ variables \cite{GH93}. They conjectured that the dimension of $V_{\mu}$ is $n!$, a statement known as the \emph{$n!$ conjecture}. This conjecture implies that the Frobenius characteristic of $V_{\mu}$ equals $\widetilde{H}_\mu[X;q,t]$.

Since the conjecture was proposed, there have been many efforts to understand the structure of the Garsia--Haiman module \cite{Ste94, GH96, All97, BG99, BBGHT99}. Later, Haiman \cite{Hai01} ultimately proved the $n!$ conjecture by utilizing the geometry of the isospectral Hilbert scheme of $n$ points in the plane, thereby confirming the Macdonald positivity conjecture. However, his proof does not provide a combinatorial formula for the Schur coefficients, the \emph{$(q,t)$--Kostka polynomials} $\widetilde{K}_{\lambda,\mu}(q,t)$, which remains an outstanding open problem.

Before Haiman's proof of the Macdonald positivity, Bergeron and Garsia \cite{BG99} studied relationships among Garsia--Haiman modules called the \emph{Science Fiction conjecture}, stating the existence of bases of Garsia--Haiman modules and their intersections with elegant properties. This conjecture has diverse implications, not only on Garsia--Haiman modules but also on modified Macdonald polynomials and the Frobenius characteristic of intersections of the Garsia--Haiman modules. One of the implications of the Science Fiction conjecture generalizing the $n!$ theorem, is the following:
\begin{conj}
Let $\nu$ be a partition of $n$ and $\mu^{(1)},\dots,\mu^{(k)}\subseteq \nu$ be $k$ distinct partitions such that $|\nu/\mu^{(1)}|=\cdots=|\nu/\mu^{(k)}|=1.$
Then the bigraded $\mathfrak{S}_n$-module $\bigcap_{i=1}^{k} V_{\mu^{(i)}}$ is of dimension $\frac{n!}{k}$, and its Frobenius characteristic is 
\begin{equation}\label{eq: Frob = SF}
    \Frob\left(\bigcap_{i=1}^{k} V_{\mu^{(i)}};q,t\right) = \sum_{i=1}^{k} \prod_{j\neq i}\dfrac{T_{\mu^{(j)}}}{(T_{\mu^{(j)}}-T_{\mu^{(i)}})}\widetilde{H}_{\mu^{(i)}}[X;q,t],
\end{equation}
where $T_\mu:=\prod_{(i,j)\in\mu}t^{i-1}q^{j-1}$.
\end{conj}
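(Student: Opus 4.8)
The plan is to split the statement into the dimension count $\dim\bigcap_{i=1}^{k}V_{\mu^{(i)}}=n!/k$ and the identification of the bigraded Frobenius characteristic, and to first understand the right-hand side of \eqref{eq: Frob = SF} as a purely algebraic object. Writing $T_i:=T_{\mu^{(i)}}$ and $H_i:=\widetilde H_{\mu^{(i)}}[X;q,t]$, note that the removable cells of $\nu$ are distinct, so the $T_i$ are pairwise distinct monomials in $q,t$, and
\[
\sum_{i=1}^{k}\Bigl(\prod_{j\neq i}\frac{T_j}{T_j-T_i}\Bigr)H_i = P(0),
\]
where $P(z)$ is the unique polynomial of degree $<k$ in $z$, with coefficients symmetric functions over $\QQ(q,t)$, satisfying $P(T_i)=H_i$ for all $i$. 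A partial-fraction computation shows the apparent poles at $T_i=T_j$ cancel, so the right-hand side is a genuine symmetric function with coefficients in $\QQ(q,t)$ (and one expects, with coefficients in $\QQ[q,t]$). For $k=2$ this $P(0)$ is precisely $\I_{\lambda,\mu}[X;q,t]$, which is where the monomial-positive formula obtained from the column exchange rule later in the paper enters.

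A first concrete subgoal is the dimension. Specializing $q=t=1$ is delicate because all nodes $T_i$ collide at $1$; the apparent singularities of $P(0)$ cancel by Vandermonde-type identities $\sum_i\tau_i^{m}/\prod_{j\neq i}(\tau_i-\tau_j)=0$ for $0\le m\le k-2$, and checking that the surviving finite part equals $h_1[X]^{n}/k$ — equivalently $\dim=n!/k$ — reduces to understanding the first-order behavior of $\mu\mapsto\widetilde H_\mu$ near $\nu$, which is exactly the kind of input the Science Fiction framework supplies.

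For the full bigraded statement I would use Haiman's geometric realization of the $V_\mu$ via the Procesi bundle $\mathcal P$ on the isospectral Hilbert scheme of $n$ points in the plane, under which $V_\mu$ (or its dual) is realized as the fiber at the torus-fixed point indexed by $I_\mu$ and $\widetilde H_\mu$ is its bigraded $\mathfrak S_n$-character. Since each $\mu^{(i)}$ is $\nu$ with one corner removed, the fixed points $[I_{\mu^{(i)}}]$ all lie on a common coordinate family of ideals degenerating simultaneously to every $I_{\mu^{(i)}}$; I would identify $\bigcap_i V_{\mu^{(i)}}$ with the fiber of $\mathcal P$ over the generic member of (the $\mathbb P^{k-1}$ parametrizing) that family. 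Torus-equivariant flatness of $\mathcal P$ then makes the bigraded character constant along the family and forces it to be the common limit $P(0)$, while the dimension $n!/k$ comes either from the flat family or from exhibiting $n!/k$ explicit elements of $\bigcap V_{\mu^{(i)}}$ — the partial derivatives of $\Delta_\nu$ that are suitably balanced across the $k$ removable cells. Alternatively, one can work inside the modules, assembling the Science Fiction short exact sequences among the $V_{\mu^{(i)}}$, the higher intersections $\bigcap_{i\in S}V_{\mu^{(i)}}$ and $V_\nu$, taking bigraded Frobenius characteristics, and solving the resulting inclusion–exclusion system using the interpolation identity above together with the dimensions.

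\textbf{The main obstacle} is the control of the intersections themselves: unlike a single module or a sum of modules, $\bigcap_i V_{\mu^{(i)}}$ is not governed by any formal or character-theoretic device, so the dimension $n!/k$ — equivalently, the transversality of the $k$ degenerations in the Hilbert scheme — requires genuine geometry, essentially the hard content of the Science Fiction conjecture. Moreover, for general $k$ even the Schur positivity of $P(0)$ is open, so a realistic attack proceeds case by case (small $k$, or special families of $\nu$); it is precisely the case $k=2$, namely Butler's conjecture, for which the combinatorial formula for $\I_{\lambda,\mu}$ developed here provides a usable handle.
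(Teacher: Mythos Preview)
The statement you are addressing is a \emph{conjecture}, not a theorem: the paper does not prove it and offers no proof to compare against. It is presented as an implication of Bergeron--Garsia's Science Fiction conjecture, and the paper immediately notes that even the dimension assertion (the $\tfrac{n!}{k}$-conjecture) is open in general, with only the $k=2$ hook-shape case settled by Armon. So there is no ``paper's own proof'' here; the paper uses this conjecture purely as motivation for studying $\I_{\lambda,\mu}$ in the $k=2$ case.

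Your proposal is accordingly not a proof but a roadmap, and you correctly identify where it breaks down. The interpolation description of the right-hand side as $P(0)$ is fine and well known. But the two substantive steps you need are both open: (i) that $\bigcap_i V_{\mu^{(i)}}$ has dimension $n!/k$, and (ii) that its bigraded character is the interpolated expression. Your geometric sketch via the Procesi bundle is suggestive, but the claim that the $k$ fixed points $[I_{\mu^{(i)}}]$ lie on a common $\mathbb P^{k-1}$ over which one can read off $\bigcap_i V_{\mu^{(i)}}$ as a generic fiber, and that equivariant flatness forces the character to be $P(0)$, is precisely the content of the Science Fiction conjecture rather than an independent argument for it. Likewise, the inclusion--exclusion approach presupposes exactness of the relevant sequences among the intersections, which is again what one is trying to prove. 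In short, you have restated the conjecture in geometric language and honestly flagged the obstacle; that is appropriate, but it should not be labeled a proof.
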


The first assertion is called the \emph{$\frac{n!}{k}$ conjecture}, and Armon recently proved the $\frac{n!}{2}$ conjecture for hook shapes \cite{Arm22}. The second implication of the Science Fiction conjecture gives a formula for the Frobenius characteristic of the intersection of the Garsia--Haiman modules as a linear combination of the modified Macdonald polynomials. We define the \emph{Macdonald Intersection polynomial} to be the symmetric function on the right-hand side of \eqref{eq: Frob = SF}, and it is denoted by $\I_{\mu^{(1)},\dots,\mu^{(k)}}[X;q,t]$. Note that $\I_{\mu^{(1)},\dots,\mu^{(k)}}[X;q,t]$ is independent of the order of $\mu^{(1)},\dots,\,\mu^{(k)}$, so we usually assume $\mu^{(1)}\trianglerighteq\dots\trianglerighteq\mu^{(k)}$ in the dominance order. In the companion paper of the authors \cite{KLO22+}, we study a remarkable connection between $\I_{\mu^{(1)},\dots,\mu^{(k)}}[X;q,t]$ and $\nabla e_{k-1}$, which is the Frobenius characteristic of the diagonal harmonics \cite{Hai02}.

In this paper, we focus more on the case when $k=2$, which is related to Butler's conjecture. In 1994, Butler observed a surprising behavior of the modified Macdonald polynomials.
\begin{conj}(Butler's conjecture \cite{But94})
Let $\nu$ be a partition and $\lambda,\mu\subseteq \nu$ be two distinct partitions such that $|\nu/\lambda|=|\nu/\mu|=1$. Then the Macdonald intersection polynomial $\I_{\lambda,\mu}[X;q,t]=\frac{T_\lambda\widetilde{H}_\mu[X;q,t]-T_\mu\widetilde{H}_\lambda[X;q,t]}{T_\lambda-T_\mu}$ is Schur positive.
\end{conj}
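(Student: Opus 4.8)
The plan is to pass to the combinatorial side via the Haglund--Haiman--Loehr formula for $\widetilde H_\mu[X;q,t]$ and then repackage everything in the language of LLT polynomials. Write $\nu = \lambda \cup \{c_1\} = \mu \cup \{c_2\}$ for two distinct removable corners $c_1, c_2$ of $\nu$. The first step is to set up a common fillings model for $\widetilde H_\lambda$ and $\widetilde H_\mu$: every filling of $\lambda$ extends to a filling of $\nu$ by inserting an entry in $c_1$, and likewise for $\mu$ via $c_2$, so both $\widetilde H_\lambda$ and $\widetilde H_\mu$ can be read off from weighted fillings of the single diagram $\nu$. The prefactors $T_\lambda, T_\mu$ are, up to normalization, exactly the extra $q,t$--weight needed to account for the $\inv$/$\maj$ contribution of the cell that was deleted, so after clearing denominators one expects
\[
    \I_{\lambda,\mu}[X;q,t] \;=\; \sum_{\sigma} q^{\inv(\sigma)}\, t^{\maj(\sigma)}\, x^\sigma,
\]
where $\sigma$ ranges over fillings of $\nu$ of a suitably restricted type --- the restriction singling out fillings in which the entries of $c_1$ and $c_2$ interact in a prescribed way. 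The mechanism that makes the quotient by $T_\lambda - T_\mu$ manifestly a positive sum of monomials should be a finite geometric series identity $\tfrac{A - B}{x - 1} = A + xA + \cdots$ applied to chains of fillings of $\nu$ obtained by varying the entry in the distinguished corner cell; this collapse yields the monomial--positive combinatorial formula.

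To upgrade monomial positivity to Schur positivity I would group the fillings appearing above by their descent configuration. Fixing the descent set makes the $t$--power constant, and the residual sum over fillings with that descent structure is, via the standard HHL--to--LLT dictionary, an LLT polynomial $G_{\bmeta}(X;q)$ attached to a tuple $\bmeta$ of ribbon/column skew shapes read off from $\nu$ and the descent data. Since LLT polynomials are Schur positive (Grojnowski--Haiman), it suffices to show that the signed combination of such $G_{\bmeta}$'s produced by the division can be rewritten as an honest nonnegative combination. This is where the new \emph{column exchange rule} enters: I would prove it is an LLT equivalence --- that transferring a suitable run of columns between two adjacent components of the tuple $\bmeta$ leaves $G_{\bmeta}$ unchanged --- either by an explicit weight--preserving bijection on the tuples of semistandard fillings, or via the spin/cospin model and the known LLT relations. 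One then uses it to transport the tuples coming from the $c_1$--extensions onto those coming from the $c_2$--extensions, so that cancellation occurs at the level of tuples and what survives is a genuine positive sum of LLT polynomials.

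The main obstacle is this last step in full generality: even granting both the monomial formula and the column exchange equivalence, reorganizing the quotient as a \emph{single} manifestly Schur--positive expression requires controlling how the tuples $\bmeta$ deform as the descent set varies, and the column exchange rule alone may not suffice to align every pair of tuples. I therefore expect the argument to go through cleanly only in favorable configurations --- for instance when $\nu$ is a hook (where the LLT tuples degenerate), when the corners $c_1, c_2$ are close together, or when $\nu$ has at most two rows or two columns --- with the general case of Schur positivity, i.e.\ the full Butler conjecture, remaining out of reach by this route and the monomial--positive formula being the strongest statement available unconditionally.
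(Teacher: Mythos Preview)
First, note that the statement you are addressing is a \emph{conjecture}: the paper does not prove it in full, and your own proposal correctly concedes this at the end. What the paper actually establishes is (i) a positive $F$-expansion of $\I_{\lambda,\mu}$ over a distinguished set of ``Butler permutations'' (its Theorem~1.3), and (ii) Schur positivity only when the cell $\nu/\lambda$ lies in the first or second row (its Theorem~1.4). So the right comparison is between your sketch and the paper's route to these partial results.

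Your high-level architecture --- HHL formula, then monomial/$F$-positivity, then LLT plus column exchange for the Schur step --- matches the paper's. The mechanisms you propose at each stage, however, are not the ones that work.

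For the monomial step, your plan of extending fillings of $\lambda$ and $\mu$ to fillings of $\nu$ by inserting an entry in the missing corner, and then collapsing via a geometric-series identity, is not how the argument goes and is unlikely to succeed as stated: the HHL statistics are not local to the corner cell, so $T_\mu/T_\lambda$ does not factor out as a geometric progression over a chain of fillings. What the paper does instead is use the column exchange rule and cycling \emph{as operations on filled diagrams} to deform $(\mu,f^{\st}_\mu)$ and $(\lambda,f^{\st}_\lambda)$ into two new filled diagrams $\mathfrak{D}_{\lambda,\mu}(\mu,f^{\st}_\mu)$ and $\mathfrak{D}_{\lambda,\mu}(\lambda,f^{\st}_\lambda)$ that are \emph{identical except on their last two columns}. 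The division by $T_\lambda-T_\mu$ is then handled by an explicit bijection $\zeta$ on permutations (Proposition~4.5): one has $\stat(w)=\stat(\zeta(w))$ when $w$ restricts to a Butler permutation and $\stat(w)=\alpha\cdot\stat(\zeta(w))$ with $\alpha=T_\mu/T_\lambda$ otherwise, so the quotient selects precisely the Butler permutations. No geometric series enters.

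For the Schur step, you are right that the column exchange rule is an LLT equivalence and that grouping by descent set produces LLT polynomials. But your diagnosis of the obstruction is slightly off. The paper shows (Remark after Lemma~6.4) that once the moved cell sits in row $\ge 3$, the resulting combination of LLT polynomials is \emph{genuinely not} a nonnegative combination --- an explicit counterexample is $\mu=(2,2,2)$, $\lambda=(2,2,1,1)$. So the difficulty is not merely that one cannot align the tuples; the target expression is not LLT-positive in general, and a different idea is required for the full conjecture.
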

Note that the Science Fiction conjecture implies Butler's conjecture, as mentioned in \cite{BG99}. More precisely, if the Science Fiction conjecture is true, the Macdonald intersection polynomial \(\I_{\lambda,\mu}[X;q,t]\) is given as the Frobenius characteristic of an \(\mathfrak{S}_n\)-module \(V_\lambda \cap V_\mu\), and is therefore Schur positive. This paper makes significant progress toward Butler's conjecture. We divide our main results into three parts.

\subsection{Fundamental expansion} The first main result is to give a combinatorial formula for the Macdonald intersection polynomial $\I_{\lambda,\mu}[X;q,t]$ as a sum of fundamental quasisymmetric functions over the \emph{Butler permutations} $\mathfrak{B}_{\lambda,\mu}$ (see Definition~\ref{def: butler permutation for two partitions}). Note that Theorem \ref{thm: first main, F-expansion} naturally gives a positive monomial expansion formula for $\I_{\lambda,\mu}[X;q,t]$ (Corollary \ref{Cor: m-expansion}).

\begin{thm}\label{thm: first main, F-expansion} 
Let $\nu$ be a partition, and $\lambda,\mu \subseteq \nu$ be two distinct partitions such that $|\nu/\lambda|=|\nu/\mu|=1$. Then there is a statistic $\stat_{\lambda,\mu}$, which is a monomial in $q,t$ (see Definition~\ref{def: butler permutation for two partitions}), defined over a set of certain permutations $\mathfrak{B}_{\lambda,\mu}$ such that the fundamental quasisymmetric expansion of $\I_{\lambda,\mu}[X;q,t]$ is given by
\begin{equation*}
    \I_{\lambda,\mu}[X;q,t] = \sum_{w \in \mathfrak{B}_{\lambda,\mu}} \stat_{\lambda,\mu}(w)F_{\iDes(w)}[X].
\end{equation*}
Here, $F_S$ denotes a fundamental quasisymmetric function. 
\end{thm}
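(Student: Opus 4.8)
The plan is to derive the formula from the Haglund--Haiman--Loehr (HHL) monomial formula for modified Macdonald polynomials, by carrying out the divided difference $(T_\lambda\widetilde H_\mu-T_\mu\widetilde H_\lambda)/(T_\lambda-T_\mu)$ directly on the combinatorial side and isolating the cancellations with the column exchange rule. Write $\nu=\lambda\cup\{a\}=\mu\cup\{b\}$, where $a$ and $b$ are the two distinct removable corners of $\nu$, and put $M_c=t^{c_1-1}q^{c_2-1}$ for a cell $c=(c_1,c_2)$, so that $T_\lambda=T_\nu/M_a$, $T_\mu=T_\nu/M_b$, and after clearing the common factor $T_\nu$ the divided difference becomes $(M_b\widetilde H_\mu-M_a\widetilde H_\lambda)/(M_b-M_a)$. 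The shapes $\lambda$ and $\mu$ have identical columns except that the column through $a$ is one box shorter in $\lambda$ and the column through $b$ is one box shorter in $\mu$, so comparing their HHL expansions reduces to understanding the interaction of these two distinguished columns --- which is exactly what the column exchange rule governs.

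First I would expand $\widetilde H_\lambda$ and $\widetilde H_\mu$ via HHL and group the fillings by their set $D$ of $\maj$-descents: for fixed $D$ the $\maj$-weight is a constant power of $t$, and the residual $q^{\inv}$-weighted quasisymmetric generating function is an LLT polynomial $\llt_{\bm\beta(D)}[X;q]$ of a tuple of ribbon (broken-column) shapes read off from $D$ and the column heights, so that $\widetilde H_\mu=\sum_D t^{\maj_\mu(D)}\,\llt_{\bm\beta_\mu(D)}[X;q]$ and likewise for $\lambda$. The core of the argument is then the column exchange rule: a weight-preserving bijection between the HHL fillings (equivalently, the semistandard tuple fillings) attached to a descent set of $\mu$ and those attached to the partner descent set of $\lambda$ obtained by swapping the states of the two distinguished columns. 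This yields, on one hand, the LLT equivalence $\llt_{\bm\beta_\mu(D)}=\llt_{\bm\beta_\lambda(E)}$ for partnered $(D,E)$, and on the other hand an exact compatibility between the associated $\maj$-levels and the monomial ratio $M_b/M_a=T_\lambda/T_\mu$; consequently, in $M_b\widetilde H_\mu-M_a\widetilde H_\lambda$ the partnered terms collapse, leaving $(M_b-M_a)$ times a sum of LLT polynomials with nonnegative-monomial coefficients, and dividing by $M_b-M_a$ exhibits $\I_{\lambda,\mu}$ as such a sum --- the positive monomial expansion.

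To reach the fundamental quasisymmetric form, I would expand each surviving LLT polynomial by its standard combinatorial rule: standardizing the tuple fillings and reading off the attacking reading word gives $\llt_{\bm\beta}[X;q]=\sum_\tau q^{\inv(\tau)}F_{\iDes(\tau)}[X]$, where $\iDes(\tau)$ is the inverse descent set of the reading word of $\tau$. Recording each standardized filling $\tau$ by its reading word and bundling together the $q,t$-monomial it carries --- from the $\maj$-level, the surviving monomial coefficient, and the LLT $\inv$ --- produces a set of permutations with a monomial weight; matching this against Definition~\ref{def: butler permutation for two partitions}, i.e. verifying that the permutations obtained are precisely the elements of $\mathfrak{B}_{\lambda,\mu}$ and that the accumulated weight equals $\stat_{\lambda,\mu}$, gives the asserted identity.

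The decisive obstacle is the column exchange rule itself. It is a genuinely new LLT equivalence, not an instance of the known symmetries (cyclic rotation, reversal, content shift), so the equality $\llt_{\bm\beta}=\llt_{\bm\gamma}$ of the column-exchanged tuples must be proved by building the bijection on fillings by hand and checking that it preserves the attacking-inversion count in every local configuration of the two distinguished columns. Interlocking with this is the statistic bookkeeping: one has to verify that the change in $\maj$ induced by the exchange is compensated exactly by the monomial $M_b/M_a$ for every relative position of $a$ and $b$, so that the cancellation in $M_b\widetilde H_\mu-M_a\widetilde H_\lambda$ is on the nose; once the equivalence and this accounting are in place, identifying the surviving objects with the Butler permutations and pinning down $\stat_{\lambda,\mu}$ is careful but essentially routine.
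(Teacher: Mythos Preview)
Your high-level strategy---expand both Macdonald polynomials via HHL, relate them by some column exchange, and read off the divided difference---matches the paper's, but the mechanism you describe has a real gap in two places.

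First, the column exchange rule (Proposition~\ref{lem: column exchange}) is not a bijection between fillings of $\mu$ and fillings of $\lambda$. It is an identity $\widetilde H_{(D,f)}=\widetilde H_{S_j(D,f)}$ swapping two \emph{adjacent} columns of a \emph{single} filled diagram, valid only under a specific compatibility condition~\eqref{eq: column exchange condition} on the filling. The two distinguished columns of $\mu$ and $\lambda$ are in general neither adjacent nor of the same heights, so there is no direct ``swap of descent states'' matching a descent set $D$ of $\mu$ with a partner $E$ of $\lambda$ with $\llt_{\bm\beta_\mu(D)}=\llt_{\bm\beta_\lambda(E)}$; indeed the bijection $\phi_{n,m}$ underlying the rule does not respect the filled-diagram descent sets, so it does not give identities between individual LLT summands. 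What the paper does instead is apply a sequence of column exchanges and cyclings \emph{separately} to $(\mu,f^{\st}_\mu)$ and to $(\lambda,f^{\st}_\lambda)$, producing deformed filled diagrams $\mathfrak{D}_{\lambda,\mu}(\mu,f^{\st}_\mu)$ and $\mathfrak{D}_{\lambda,\mu}(\lambda,f^{\st}_\lambda)$ that agree on all but their last two columns (Lemma~\ref{lem: deformed shape is nice}); only after this reduction is there anything to compare.

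Second, the cancellation in the divided difference is not the uniform ``partnered terms collapse to $(M_b-M_a)$ times a sum'' that you describe. The actual mechanism (Proposition~\ref{lem: main LLT lemma}) is a \emph{dichotomy}: on the reduced two-column diagram there is an $\iDes$-preserving bijection $\zeta$ on permutations such that $\stat(w)=\stat(\zeta(w))$ when the relevant restriction of $w$ is a Butler permutation, and $\stat(w)=\alpha\,\stat(\zeta(w))$ with $\alpha=T_\mu/T_\lambda$ when it is not. Hence non-Butler terms cancel completely in $\widetilde H_\mu-\alpha\widetilde H_\lambda$, while Butler terms survive with coefficient $1-\alpha$. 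Establishing this dichotomy is the technical core of the argument: $\zeta$ is built recursively from $\phi_n,\psi_n$ and two further auxiliary bijections $\eta^{(1)}_n,\eta^{(2)}_n$ (Lemma~\ref{lem: auxiliary maps}), with a dozen cases on $\std(w|_{[2n-2,2n+1]})$ checked one by one. So the step you call ``careful but essentially routine''---identifying the survivors as the Butler permutations with the correct $\stat$---is in fact where essentially all of the work lies.
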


Our result on $\I_{\lambda,\mu}[X;q,t]$ refines the following foundational result: the combinatorial formula for the modified Macdonald polynomials given by Haglund, Haiman, and Loehr. The undefined terms are defined in Section~\ref{Sec: preliminaries} and Section~\ref{Sec: generalization of Macdonald}. 
\begin{thm}\label{thm: HHL formula}\cite{HHL05}
For a partition $\mu$, there is a statistic $\stat_{(\mu,f^{\st}_\mu)}$ such that the fundamental quasisymmetric expansion of $\widetilde{H}_\mu[X;q,t]$ is given by
\begin{equation*}
    \widetilde{H}_\mu[X;q,t] = \sum_{w\in \mathfrak{S}_n} \stat_{(\mu,f^{\st}_{\mu})}(w) F_{\iDes(w)}[X].
\end{equation*}
\end{thm}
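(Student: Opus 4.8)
The plan is to prove the fundamental quasisymmetric expansion in two stages: first establish the monomial form of the Haglund--Haiman--Loehr formula, and then pass to the $F$-expansion by a standardization argument. The monomial form asserts that
\[
\widetilde{H}_\mu[X;q,t] = \sum_{\sigma\colon \mu \to \mathbb{Z}_{>0}} q^{\inv(\sigma)} t^{\maj(\sigma)} x^{\sigma},
\]
where $\sigma$ runs over all fillings of the Young diagram of $\mu$ by positive integers, $x^{\sigma}=\prod_{u\in\mu} x_{\sigma(u)}$, $\maj(\sigma)=\sum_{u}(\leg(u)+1)$ summed over the cells $u$ that form a descent (entry strictly larger than the one directly below), and $\inv(\sigma)$ counts the inversion triples determined by the arm. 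All of the representation-theoretic content of the theorem is contained in this identity; the remaining passage to quasisymmetric functions is purely combinatorial.

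For the monomial identity I would follow the strategy of verifying the standard triangularity characterization of $\widetilde{H}_\mu$: the family $\widetilde{H}_\mu$ is the unique basis satisfying the two conditions $\widetilde{H}_\mu[X(1-q)] \in \operatorname{span}\{s_\lambda : \lambda \trianglerighteq \mu\}$ and $\widetilde{H}_\mu[X(1-t)] \in \operatorname{span}\{s_\lambda : \lambda \trianglerighteq \mu'\}$ together with the normalization $\langle \widetilde{H}_\mu, s_{(n)}\rangle = 1$. One checks that the combinatorial sum above satisfies these three conditions: the normalization is immediate from the unique filling with all entries equal, while each triangularity condition is proven by constructing a sign-reversing, weight-preserving involution on the signed set of fillings produced by the plethystic substitution, the involution toggling a chosen cell and cancelling all terms that violate the required Schur support. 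This involution-based verification is the crux of the argument and is where essentially all of the difficulty lies, since the involution must simultaneously respect the descent structure governing $\maj$ and the triple orientations governing $\inv$.

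With the monomial formula in hand, I would repackage it by standardization. Fix the reading order on the cells of $\mu$ encoded by the datum $f^{\st}_\mu$ and define $\std(\sigma)=w\in\mathfrak{S}_n$ by relabelling cells with $1,\dots,n$ so that smaller values of $\sigma$ receive smaller labels, ties broken by the reading order. The point is that both statistics are constant on fibers: strict vertical inequalities are preserved and vertical equalities are resolved by the reading order so as never to create a new descent, whence $\maj(\sigma)=\maj(w)$; and the reading order is chosen precisely so that resolving equalities within each inversion triple leaves its inversion status unchanged, whence $\inv(\sigma)=\inv(w)$. Finally, for fixed $w$ the fillings with $\std(\sigma)=w$ are exactly those whose values, read in the $f^{\st}_\mu$-order, weakly increase with strict increases precisely at the positions of $\iDes(w)$, so that $\sum_{\std(\sigma)=w} x^{\sigma}=F_{\iDes(w)}[X]$ by definition of the fundamental quasisymmetric function. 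Summing the monomial formula fiber by fiber and setting $\stat_{(\mu,f^{\st}_\mu)}(w)=q^{\inv(w)}t^{\maj(w)}$ yields the claimed expansion.
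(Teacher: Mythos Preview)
The paper does not give its own proof of this statement: Theorem~\ref{thm: HHL formula} is quoted verbatim from \cite{HHL05} and used as a black box throughout. Your proposal is a faithful high-level outline of the original Haglund--Haiman--Loehr argument (monomial formula via the triangularity characterization, then standardization to pass to the $F$-expansion), so there is nothing in the present paper to compare it against.

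One small remark on your last line: in the notation of this paper, $\stat_{(\mu,f^{\st}_\mu)}(w)$ is not literally $q^{\inv(w)}t^{\maj(w)}$ for the naive inv/maj on permutations; the filling $f^{\st}_\mu(u)=q^{-\arm_\mu(u)}t^{\leg_\mu(u)+1}$ is what encodes the arm-corrected inversion count and the leg-weighted major index of \cite{HHL05}. Your sketch implicitly uses the HHL definitions of $\inv$ and $\maj$, which is fine, but be aware that the translation into the paper's $\stat_{(D,f)}$ framework goes through Definition~\ref{def: stat definition} and the choice of standard filling, not through the usual permutation statistics.
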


To prove Theorem~\ref{thm: first main, F-expansion}, we introduce intricate relations between two (generalized) modified Macdonald polynomials: the column exchange rule (Proposition~\ref{lem: column exchange}) and Proposition~\ref{lem: main LLT lemma}. As a byproduct, we obtain a (positive) monomial symmetric function expansion for $\I_{\lambda,\mu}[X;q,t]$ defined over Butler words (Corollary~\ref{Cor: m-expansion}). It also follows that $\I_{\lambda,\mu}[X;1,1]=h_{(2,1^{n-2})}[X]$, which depends only on the size $|\lambda|=|\mu|=n$ (Corollary~\ref{Cor: h21111 at q=t=1}). Here, $h_\lambda$ denotes the complete homogeneous symmetric function. This is reminiscent of the fact that for any partition $\mu\vdash n$, $\widetilde{H}_\mu[X;1,1]=h_{(1^n)}[X]$. This corollary is also consistent with the $\frac{n!}{2}$ conjecture.

\subsection{LLT polynomials and Schur positivity}
While studying quantum affine algebras and unipotent varieties, Lascoux, Leclerc, and Thibon introduced a $q$-analogue of a product of skew Schur functions, now called the \emph{LLT polynomials} \cite{LLT97}. Lascoux, Leclerc, and Thibon conjectured, and Grojnowski and Haiman proved, the Schur positivity of LLT polynomials \cite{GH07}. In addition, Theorem~\ref{thm: HHL formula} can be translated into a (positive) LLT-expansion (indexed by tuples of ribbons) of the modified Macdonald polynomials. Therefore, one can study the linear combinations of LLT polynomials to tackle the Schur positivity of $\I_{\lambda,\mu}[X;q,t]$.

Recently, relations between linear combinations of LLT polynomials (called the LLT equivalence, see Section~\ref{Sec: Proof of Schur positivities}) have been extensively studied in \cite{Lee21, HNY20, Mil19, AS22, Tom21}. After applying the column exchange rule and a series of LLT equivalences, we obtain the following theorem:

\begin{thm}\label{thm: second main, s-positivities} 
Let $\nu$ be a partition and $\lambda,\mu \subseteq \nu$ be two distinct partitions such that $|\nu/\lambda| = |\nu/\mu| = 1$. Suppose that the cell $\nu/\lambda$ is in the first or second row of the partition $\lambda$. Then $\I_{\lambda,\mu}[X;q,t]$ has a positive expansion in LLT polynomials. In particular, $\I_{\lambda,\mu}[X;q,t]$ is Schur positive.
\end{thm}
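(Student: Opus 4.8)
\textbf{Proof strategy for Theorem~\ref{thm: second main, s-positivities}.}
The plan is to reduce the statement, via the combinatorial machinery already set up, to a manipulation of LLT polynomials and then to invoke known LLT equivalences. First I would use the HHL formula (Theorem~\ref{thm: HHL formula}) together with the $F$-expansion of Theorem~\ref{thm: first main, F-expansion} to rewrite $\I_{\lambda,\mu}[X;q,t]$ as an explicit $q,t$-linear combination of LLT polynomials indexed by tuples of ribbons; the key input here is that the divided difference $(T_\lambda\widetilde H_\mu - T_\mu\widetilde H_\lambda)/(T_\lambda-T_\mu)$ collapses, after applying the column exchange rule (Proposition~\ref{lem: column exchange}) and Proposition~\ref{lem: main LLT lemma}, to a sum over the Butler permutations $\mathfrak B_{\lambda,\mu}$ with nonnegative monomial coefficients $\stat_{\lambda,\mu}$. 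So the task becomes: show that the resulting signed combination of LLT polynomials is in fact a \emph{positive} combination of LLT polynomials.

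Next I would specialize to the hypothesis that the added cell $\nu/\lambda$ sits in the first or second row of $\lambda$. Geometrically this constrains the shapes of the ribbon tuples that appear: the tuples differ from one another only by a local modification near the bottom one or two rows, so the LLT polynomials in the expansion are all ``close'' to each other. The idea is to pair up the terms and cancel the negative contributions against positive ones using the LLT equivalences studied in \cite{Lee21, HNY20, Mil19, AS22, Tom21} — in particular the ones that identify two LLT polynomials (or express one as a positive sum of others) when the underlying tuples of ribbons are related by a single column- or row-move in a restricted region. When $\nu/\lambda$ is in the first row the combination should be especially simple (essentially a known two-term identity), and when it is in the second row a slightly longer chain of equivalences is needed, but the restricted geometry keeps the number of cases finite and checkable. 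Once $\I_{\lambda,\mu}[X;q,t]$ is written as a nonnegative combination of LLT polynomials, Schur positivity follows immediately from the Grojnowski--Haiman theorem \cite{GH07}.

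The main obstacle, as I see it, is the bookkeeping in the second-row case: verifying that the specific sequence of LLT equivalences actually absorbs \emph{all} of the negative terms, with no leftover. This requires (i) an exact description of which ribbon tuples occur and with which signs — controlled by the structure of $\mathfrak B_{\lambda,\mu}$ and the two possible positions of the removed/added cells of $\lambda$ and $\mu$ relative to $\nu$ — and (ii) matching each negative LLT polynomial with a companion via an equivalence whose hypotheses are met by these tuples. I expect the argument to split according to whether the cells $\nu/\lambda$ and $\nu/\mu$ lie in the same row, in adjacent rows, or in rows far apart, with the ``same or adjacent row'' subcases being where the column exchange rule does the real work and the ``far apart'' subcase reducing to a product situation where positivity is more transparent. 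Everything else — the reduction to LLT polynomials, the finiteness of the case analysis, and the final appeal to \cite{GH07} — is routine given the results already established in the earlier sections.
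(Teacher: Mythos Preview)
Your high-level plan---column exchange reduction, then LLT equivalences, then Grojnowski--Haiman---matches the paper's. But two concrete points in the execution are off.

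First, the detour through Theorem~\ref{thm: first main, F-expansion} and Butler permutations is unnecessary and does not give you what you need. The $F$-expansion over $\mathfrak{B}_{\lambda,\mu}$ is a sum of fundamental quasisymmetric functions, not an LLT expansion, and regrouping those $F$'s back into LLT polynomials with visible signs is not automatic. The paper bypasses this entirely: after applying the deformation $\mathfrak{D}_{\lambda,\mu}$ (column exchange plus cycling) to both $(\mu,f^{\st}_\mu)$ and $(\lambda,f^{\st}_\lambda)$, Lemma~\ref{lem: deformed shape is nice} says the two filled diagrams agree on the first $\ell-2$ columns and differ only on the last two columns via $\bar S_{\ell-1}$. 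One then uses the LLT expansion \eqref{eq: modified macdonald to llt} directly on these deformed filled diagrams. Because LLT equivalence is local, the divided difference of the two $\widetilde H$'s is governed entirely by those last two columns, and the problem becomes exactly Lemma~\ref{lem: s positivity reduction}: show that for $(D,f)\in\bar{\mathcal V}(n,m;\alpha)$ with $m=1$ or $2$, the quantity $(\widetilde H_{(D,f)}-\alpha\widetilde H_{\bar S(D,f)})/(1-\alpha)$ is LLT-equivalent to a positive combination.

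Second, your proposed case split---same row / adjacent rows / rows far apart for the cells $\nu/\lambda$ and $\nu/\mu$---is not the right organizing principle. The column exchange reduction absorbs the position of $\nu/\mu$ completely; after deformation only the row index of $\nu/\lambda$ survives, as the parameter $m$ (the height of the shorter of the last two columns). So the entire argument is the single dichotomy $m=1$ versus $m=2$. For $m=1$ the identity $(C_1,C_2)\equiv q(V)+(H)$ of Proposition~\ref{prop: llt equivalence bandwidth 2} suffices; for $m=2$ one uses the four bandwidth-$3$ equivalences of Proposition~\ref{Prop: LLT equivalences bandwidth 3} (due to Miller) and a short explicit computation yields
\[
\dfrac{\widetilde H_{(D,f)}-\alpha\widetilde H_{(D',f')}}{1-\alpha}\ \equiv\ \llt_{(R_{[3]}(\emptyset),C_2)}+\alpha\,\llt_{(H_1,V_2)}+q\alpha\beta\,\llt_{(S)}.
\]
There is no ``far apart'' subcase and no product situation to handle separately.
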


As a corollary, we obtain Schur positivity results for $\I_{\lambda,\mu}[X;q,t]$ at $t = 1$ or $q = 1$ (Corollary~\ref{Cor: Schur positivity at t=1}).

\subsection{Combinatorial formula for Schur coefficients}

Macdonald \cite{Mac88} conjectured, and Haiman proved \cite{Hai01}, that the Schur coefficients of the modified Macdonald polynomials are, in fact, polynomials in $q$ and $t$ with nonnegative integer coefficients. These Schur coefficients are called the modified $(q,t)$-Kostka polynomials. Haiman's proof relies heavily on the geometry of the isospectral Hilbert scheme of $n$ points in the plane and does not provide a combinatorial interpretation for the modified $(q,t)$-Kostka polynomials.

We hope that Theorem~\ref{thm: second main, s-positivities} sheds light on providing a combinatorial formula for the modified $(q,t)$-Kostka polynomials. In particular, we describe the Schur expansion of $\I_{\mu,\nu}[X;q,t]$ for two neighboring hook shapes $\mu$ and $\nu$ (Corollary~\ref{cor: hook}), building upon Assaf's formula \cite{Ass18}..

\subsection{Sketch of the proof of Theorem~\ref{thm: first main, F-expansion} and Theorem~\ref{thm: second main, s-positivities}}
We first define the (generalized) modified Macdonald polynomials for filled diagrams in Section~\ref{Sec: generalization of Macdonald}. Then, we introduce two identities between modified Macdonald polynomials. The first one is the \emph{cycling identity} (Lemma~\ref{lem: cycling}, Figure~\ref{fig: cycling}), which follows directly from the definition. The second one is the \emph{column exchange rule} (Proposition~\ref{lem: column exchange}, Figure~\ref{fig: generic filled diagrams column exchange lemma}), which is essential in the proofs of Theorem~\ref{thm: first main, F-expansion} and Theorem~\ref{thm: second main, s-positivities}. By applying these identities to the modified Macdonald polynomials $\widetilde{H}_\mu[X;q,t]$ and $\widetilde{H}_\lambda[X;q,t]$, we reduce Theorem~\ref{thm: first main, F-expansion} to Proposition~\ref{lem: main LLT lemma} and Theorem~\ref{thm: second main, s-positivities} to Lemma~\ref{lem: s positivity reduction}.

\subsection{Organization}
This paper is organized as follows. We begin with some preliminaries in Section~\ref{Sec: preliminaries}. In Section~\ref{Sec: generalization of Macdonald}, we define a generalization of the modified Macdonald polynomials indexed by general shapes and statistics, called a filled diagram. In Section~\ref{Sec: Butler permutations, main Lemmas}, we introduce and prove the column exchange rule, which plays a central role in our work. We also define Butler permutations and prove an $F$-expansion formula for the two-column diagram case (Proposition~\ref{lem: main LLT lemma}). Section~\ref{Sec: proof of main theorem} contains a proof of Theorem~\ref{thm: first main, F-expansion}. In Section~\ref{Sec: Proof of Schur positivities}, we provide background on LLT polynomials and their connection to the modified Macdonald polynomials. Then, using several LLT equivalences, we prove Theorem~\ref{thm: second main, s-positivities}. In Section~\ref{Sec: Schur expansions of Macdonald polynomials}, we discuss two combinatorial formulas, which are consistent with Butler's conjecture. In the final section, we pose some open questions for future research.

\section{Preliminaries}\label{Sec: preliminaries}
\subsection{Partitions and tableaux}
A \emph{partition} is a nonincreasing sequence 
\[
    \mu = (\mu_1, \mu_2, \dots, \mu_\ell)
\]
of positive integers (parts), and its \emph{size} is defined as $\mu_1 + \mu_2 + \cdots + \mu_\ell$. We write $\mu \vdash n$ to denote that $\mu$ is a partition of size $n$. For a partition $\mu = (\mu_1, \mu_2, \dots, \mu_\ell)$, we abuse notation and write
\[
    \mu = \{(i,j) \in \mathbb{Z}_+ \times \mathbb{Z}_+ : 1 \leq i \leq \ell, 1 \leq j \leq \mu_i\}
\]
to denote its \emph{Young diagram}, whose elements are called \emph{cells}. We draw the Young diagram in the first quadrant, using French notation; see the left side of Figure~\ref{Fig: Young diagram arm and leg} for an example. The \emph{conjugate} partition $\mu' = (\mu'_1, \mu'_2, \dots)$ is the partition obtained by reflecting the Young diagram of $\mu$ along the diagonal $y = x$.

For a cell $u$ in a partition $\mu$, we write $u = (i,j)$, where $i$ is the row index and $j$ is the column index. The \emph{arm} of a cell, denoted by $\arm_\mu(u)$, is the number of cells strictly to the right of $u$ in the same row. Its \emph{leg}, denoted by $\leg_\mu(u)$, is the number of cells strictly above $u$ in the same column. For example, the red cell $u$ on the right side of Figure~\ref{Fig: Young diagram arm and leg} is denoted by $(2,1)$. It has three cells (denoted by $a$) strictly to the right of it in the same row, which gives $\arm_{\mu}(u) = 3$. By similar reasoning, we have $\leg_{\mu}(u) = 2$.

 \begin{figure}[h]
    \centering
    \[
        \begin{ytableau}
      \\
    &  &  \\
     &  &  & \\
    &   &  &  &
    \end{ytableau}\qquad\qquad\qquad
    \begin{ytableau}
     \ell \\
     \ell&  &  \\
    *(red) u  & a& a & a\\
     &  &  &  & 
    \end{ytableau}
    \]
    \caption{The left figure shows the Young diagram in the French notation for a partition $(5,4,3,1)$. The right figure shows the computation of the arm and the leg for the red cell $u$.}
    \label{Fig: Young diagram arm and leg}
\end{figure}

There is a partial order $\trianglerighteq$ called the \emph{dominance order} of partitions of $n$ which is defined by 
\begin{equation*}
    \lambda\trianglerighteq\mu \text{ if } \lambda_1+\cdots+\lambda_k\geq\mu_1+\cdots+\mu_k \text{ for all } k.
\end{equation*}
For partitions $\lambda$ and $\mu$ with $\mu\subseteq\lambda$, a \emph{skew partition} is a subset of $\mathbb{Z}_+\times\mathbb{Z}_+$ of the form $\lambda/\mu$.

For a skew partition $\nu$, a \emph{semistandard tableau} of shape $\nu$ is a filling of $\nu$ with positive integers where each row is weakly increasing from left to right and each column is strictly increasing from bottom to top. For a tuple $\bmnu=(\nu^{(1)},\nu^{(2)},\dots)$ of skew partitions, a semistandard tableau $\bmT=(T^{(1)},T^{(2)},\dots)$ of shape $\bmnu$ is a tuple of semistandard tableaux where each $T^{(i)}$ is a semistandard tableau of shape $\nu^{(i)}$. The set of semistandard tableaux of shape $\bmnu$ is denoted by $\ssyt(\bmnu)$. A semistandard tableau of size $n$ is called \emph{standard} if its filling consists of $1,2,\dots,n$. We denote the set of standard tableaux of shape $\bmnu$ by $\syt(\bmnu)$. 

\subsection{Symmetric functions}
We denote by $\Lambda = \bigoplus_{d \geq 0} \Lambda_d$ the graded ring of symmetric functions in an infinite set of variables $X = x_1, x_2, \dots$ over the ground field $\mathbb{Q}(q,t)$. Here, $\Lambda_d$ denotes the subspace of $\Lambda$ consisting of homogeneous symmetric functions of degree $d$. For a partition $\lambda \vdash n$, the \emph{monomial symmetric function} $m_\lambda[X] \in \Lambda_n$ is defined by
\begin{equation*}
    m_\lambda[X] := \sum_{\alpha} x^\alpha,
\end{equation*}
where $x^\alpha = x_1^{\alpha_1} x_2^{\alpha_2} \cdots$, and the sum is over all $\alpha$ such that the parts of $\alpha$ are a rearrangement of the parts of $\lambda$, by appending zeros if necessary.

The {\em homogeneous} and {\em elementary} symmetric functions associated to a partition $\lambda$ are defined by
\begin{equation*}
h_\lambda[X] := h_{\lambda_1} h_{\lambda_2} \cdots, \quad \quad \text{and} \quad \quad e_\lambda[X] := e_{\lambda_1} e_{\lambda_2} \cdots,
\end{equation*}
where 
\begin{equation*}
h_d[X] := \sum_{i_1 \leq \cdots \leq i_d} x_{i_1} \cdots x_{i_d}, \quad \quad \text{and} \quad \quad
e_d[X] := \sum_{i_1 < \cdots < i_d} x_{i_1} \cdots x_{i_d}.
\end{equation*}

The most prevalent symmetric function is the \emph{Schur function}. For a partition $\lambda \vdash n$, we define the Schur function $s_\lambda[X]$ by
\begin{equation*}
    s_\lambda[X] := \sum_{T \in \ssyt(\lambda)} x^T
\end{equation*}
where $x^T = x_1^{T_1} x_2^{T_2} \cdots$. Here $T_i$ is the number of $i$'s in the semistandard tableau $T$. Let $\langle -,-\rangle$ be an inner product on symmetric functions such that
\begin{equation*}
    \langle s_\lambda, s_\mu \rangle = \delta_{\lambda,\mu},
\end{equation*}
thus implying that Schur functions form an orthonormal basis. We call this inner product the \emph{Hall inner product}.

The \emph{modified Macdonald polynomials} are defined by the unique family of symmetric functions satisfying the following \emph{triangulation} and \emph{normalization} axioms \cite[Proposition 2.6]{Hai99}:
\begin{enumerate}
    \item $\widetilde{H}_\mu[X(1-q);q,t] = \sum_{\lambda \trianglerighteq \mu} a_{\lambda,\mu}(q,t) s_\lambda[X]$,
    \item $\widetilde{H}_\mu[X(1-t);q,t] = \sum_{\lambda \trianglerighteq \mu'} b_{\lambda,\mu}(q,t) s_\lambda[X]$, and
    \item $\langle \widetilde{H}_\mu[X;q,t], s_{(n)}[X] \rangle = 1$,
\end{enumerate}
for suitable coefficients $a_{\lambda,\mu}, b_{\lambda,\mu} \in \QQ(q,t)$. Here, $[-]$ denotes the plethystic substitution, which can be simply understood in terms of the power sum symmetric function $p_k[X]$. We have $p_k[X(1-q)] = (1-q^k) p_k[X]$ and $p_k[X(1-t)] = (1-t^k) p_k[X]$. As $p_k[X]$'s are algebra generators of $\Lambda$, we can compute $f[X(1-q)]$ or $f[X(1-t)]$ for any symmetric function $f \in \Lambda$.

Each of these families of symmetric functions — monomial symmetric functions, complete homogeneous symmetric functions, elementary symmetric functions, Schur functions, and modified Macdonald polynomials — forms a basis for $\Lambda$.

\subsection{Quasisymmetric functions}
A function $f$ is \emph{quasisymmetric} if for every composition $\alpha$ and any $0<i_1<i_2<\cdots$, the coefficient of $x_1^{\alpha_1}x_2^{\alpha_2}\cdots$ is the same as the coefficient of $x_{i_1}^{\alpha_1}x_{i_2}^{\alpha_2}\cdots$. We denote the graded ring of quasisymmetric functions in an infinite variable set $X=x_1,x_2,\dots$ over the ground field $\mathbb{Q}(q,t)$ by $\QSym=\bigoplus_{d\ge0}\QSym_d$. Here, $\QSym_d$ is the subspace of $\QSym$ consisting of homogeneous quasisymmetric functions of degree $d$. 

Quasisymmetric functions of degree $n$ are usually indexed by subsets of $[n-1] := \{1, 2, \dots, n-1\}$. For a subset $S = \{s_1 < s_2 < \dots < s_\ell\} \subseteq [n-1]$, the \emph{monomial quasisymmetric function} $M_{n,S}[X]$ is defined by 
\begin{equation*}
    M_{n,S}[X] := \sum_{i_1 < i_2 < \dots < i_{\ell+1}} x_{i_1}^{s_1} x_{i_2}^{s_2 - s_1} \cdots x_{i_\ell}^{s_\ell - s_{\ell-1}} x_{i_{\ell+1}}^{n - s_{\ell}}.
\end{equation*}

The most important family of quasisymmetric function is Gessel's \emph{fundamental quasisymmetric functions} \cite{Ges84}. For $S \subseteq [n-1]$, the fundamental quasisymmetric function $F_{n,S}[X]$ is defined by
\begin{equation*}
    F_{n,S}[X] := \sum_{\substack{1 \leq b_1 \leq b_2 \leq \cdots \leq b_n \\ i \in S \Rightarrow b_i < b_{i+1}}} x_{b_1} x_{b_2} \cdots x_{b_n}.
\end{equation*}
In other words, in terms of monomial quasisymmetric functions, we have
\begin{equation*}
    F_{n,S}[X] = \sum_{S \subseteq T \subseteq [n-1]} M_{n,T}[X].
\end{equation*}
If $n$ is clear from the context, we denote the fundamental quasisymmetric function for $S \subseteq [n-1]$ simply by $F_S[X]$, and similarly for the monomial quasisymmetric function.

Given a permutation $w\in \mathfrak{S}_n$, we associate a fundamental quasisymmetric function in the following way. We first define 
\begin{equation*}
\des(w):=\{i:w_i>w_{i+1}\}, 
\end{equation*}
then, clearly $\des(w)\subseteq [n-1]$. Now we let $F_{\des(w)}=F_{n,\des(w)}$. We also let $\iDes(w):=\des(w^{-1})$ and similarly define $F_{\iDes(w)}$.

It is worth noting that subsets of $[n-1]$ are in one-to-one correspondence with compositions of $n$ in a natural way. We denote by $\Set$ the bijection from compositions to $2^{[n-1]}$ given by
\begin{align*}
    \Set:(\beta_1, \beta_2 - \beta_1, \dots, n - \beta_{\ell-1}) \mapsto \{\beta_1 < \dots < \beta_{\ell-1}\}.
\end{align*}
For a composition $\alpha$, we abuse our notation to refer $F_\alpha$ and $M_\alpha$ as
\[
F_\alpha[X]=F_{n,\Set(\alpha)}[X] \quad \text{ and } \quad M_\alpha[X] = M_{n,\Set(\alpha)}[X].
\]
\section{Generalization of modified Macdonald polynomials}\label{Sec: generalization of Macdonald}

A (general) \emph{diagram} $D$ is a collection of points (cells) in $\mathbb{Z}_+ \times \mathbb{Z}_+$, and a \emph{bottom cell} of $D$ is a cell located in the lowest position in each column of $D$. We denote by $D^+$ the collection of cells in $D$ that are not bottom cells. A \emph{filled diagram} $(D,f)$ consists of a diagram $D$ together with a filling 
$$ f: D^{+} \to \mathbb{F}, $$
which assigns a scalar in $\mathbb{F}$ to each cell of $D$ that is not a bottom cell. For the remainder of this paper, we set $\mathbb{F} = \mathbb{Q}(q,t)$. We visualize $(D,f)$ by writing the corresponding value of $f$ for each cell. For example, see the left side of Figure~\ref{Fig: filled diagram}.

\begin{figure}[ht] \centering
\begin{tikzpicture}[scale=0.9]

\draw[-] (0,0)--(1,0);
\draw[-] (0,1)--(3,1);
\draw[-] (0,2)--(3,2);
\draw[-] (0,3)--(2,3);

\draw[-] (0,0)--(0,3);
\draw[-] (1,0)--(1,3);
\draw[-] (2,1)--(2,3);
\draw[-] (3,1)--(3,2);

\filldraw[black] (0.5,2.3) circle (0.000001pt) node[anchor=south] {$q^2$};
\filldraw[black] (0.5,1.3) circle (0.000001pt) node[anchor=south] {$q^3t$};

\filldraw[black] (1.5,2.3) circle (0.000001pt) node[anchor=south] {$t$};

\begin{scope}[shift={(8,0)}]

\draw[-] (0,0)--(1,0);
\draw[-] (0,1)--(3,1);
\draw[-] (0,2)--(3,2);
\draw[-] (0,3)--(2,3);

\draw[-] (0,0)--(0,3);
\draw[-] (1,0)--(1,3);
\draw[-] (2,1)--(2,3);
\draw[-] (3,1)--(3,2);

\filldraw[black] (0.5,2.3) circle (0.000001pt) node[anchor=south] {$1$};
\filldraw[black] (1.5,2.3) circle (0.000001pt) node[anchor=south] {$2$};
\filldraw[black] (0.5,1.3) circle (0.000001pt) node[anchor=south] {$3$};
\filldraw[black] (1.5,1.3) circle (0.000001pt) node[anchor=south] {$4$};
\filldraw[black] (2.5,1.3) circle (0.000001pt) node[anchor=south] {$5$};
\filldraw[black] (0.5,0.3) circle (0.000001pt) node[anchor=south] {$6$};

\end{scope}

\end{tikzpicture}
\caption{The left figure shows an example of a filled diagram $(D,f)$, and the right figure shows the total order $N_{D}$ on each cell of $D$.}\label{Fig: filled diagram}
\end{figure}

We denote a diagram by \( D = [D^{(1)}, D^{(2)}, \dots] \), where \( D^{(j)} \) represents the set of row indices of the cells in the \( j \)-th column. Throughout this paper, we assume that for every diagram \( D \), each column \( D^{(j)} \) is an interval \( [a,b] := \{a, a+1, \dots, b\} \) for some \( a \leq b \). For example, the diagram in Figure~\ref{Fig: filled diagram} is represented as 
\[
    D = [[1,3], [2,3], [2,2]],
\]
and the Young diagram for a partition \( \mu = (\mu_1, \dots, \mu_\ell) \) can be written as \( \mu = [[\mu'_1], \dots, [\mu'_{\mu_1}]] \), where \( \mu' \) is the conjugate of \( \mu \) and \( [n] \) denotes the shorthand notation for \( [1, n] = \{1, 2, \dots, n\} \).

\begin{definition} \label{def: stat definition}
We define a total order on the cells in \( D \) row by row, from top to bottom and left to right within each row. We denote this total order by \( N_D: D \rightarrow [|D|] \); see the right of Figure~\ref{Fig: filled diagram} for an example.

For a filled diagram \((D,f)\), we define functions
\[
    \inv_{D}: \mathfrak{S}_{|D|} \rightarrow \mathbb{F}, \qquad \maj_{(D,f)}: \mathfrak{S}_{|D|} \rightarrow \mathbb{F}
\]
as follows. For a permutation \( w \in \mathfrak{S}_{|D|} \), we say that a pair \((u,v)\) of cells in \( D \) is an \emph{inversion} with respect to \( w \) if \( w_{N_D(u)} > w_{N_D(v)} \) and either
\begin{itemize}
    \item \( u = (i,j) \), \( v = (i,j') \) such that \( j < j' \), or
    \item \( u = (i,j) \), \( v = (i-1,j') \) such that \( j > j' \).
\end{itemize}
Then we define
\[
\inv_{D}(w) := \prod_{(u,v)} q,
\]
where the product is over all pairs \((u,v)\) of cells in \( D \) that are inversions with respect to \( w \).

Similarly, for \( w \in \mathfrak{S}_{|D|} \), we say that a cell \( u = (i,j) \) in \( D \) is a \emph{descent} with respect to \( w \) if \( w_{N_D(u)} > w_{N_D(v)} \), where \( v = (i-1,j) \) is the cell just below \( u \). Then we define
\[
\maj_{(D,f)}(w) := \prod_{u} f(u),
\]
where the product is over all cells \( u \) that are descents with respect to \( w \). Since a bottom cell cannot be a descent, we do not require the filling \( f \) to assign a value to a bottom cell in the definition of a filled diagram.

Finally, we define a function \(\stat_{(D,f)}: \mathfrak{S}_{|D|} \rightarrow \mathbb{F}\) by
\[
\stat_{(D,f)}(w) := \inv_{D}(w) \maj_{(D,f)}(w).
\]

Although we have defined $\stat_{(D,f)}(w)$ for a permutation $w \in \mathfrak{S}_{|D|}$, $\stat_{(D,f)}(w)$ can also be defined for any word $w$ of length $|D|$ consisting of positive integers, by setting $\stat_{(D,f)}(w) := \stat_{(D,f)}(\std(w))$.

\end{definition}

\begin{example}
For the filled diagram $(D,f)$ on the left of Figure~\ref{Fig: filled diagram} and the permutation $w=254316$, the inversions with respect to $w$ are
\[
    ((3,2),(2,1)),\quad ((2,1),(2,2)),\quad ((2,1),(2,3)),\quad \text{and} \quad ((2,2),(2,3)).
\]
The only descent with respect to $w$ is the cell $(3,2)$. Therefore, we have $\inv_D(w) = q^4$ and $\maj_{(D,f)}(w) = t$, which gives $\stat_{(D,f)}(w) = q^4 t$.
\end{example}

Following the spirit of Haglund--Haiman--Loehr formula (Theorem~\ref{thm: HHL formula}), we define a generalization of the modified Macdonald polynomial for a filled diagram.
\begin{definition}\label{Def: generalized modified Macdonald}The \emph{(generalized) modified Macdonald polynomial} $\widetilde{H}_{(D,f)}[X;q,t]$ for a filled diagram $(D,f)$ is 
\[
    \widetilde{H}_{(D,f)}[X;q,t] = \sum_{w\in \mathfrak{S}_n} \stat_{(D,f)}(w) F_{\iDes(w)}.
\]
\end{definition}

Given a partition $\mu$, we define the \emph{standard filling} of $\mu$, \[f^{\st}_{\mu}: \mu^{+} \rightarrow \mathbb{F}\] by
\(f^{\st}_{\mu}(u)=q^{-\arm_\mu(u)}t^{\leg_\mu(u)+1}.
\) Then the modified Macdonald polynomial for the filled diagram $(\mu,f^{\st}_\mu)$ is the usual modified Macdonald polynomial
\[
\widetilde{H}_{(\mu,f^{\st}_\mu)}[X;q,t] = \widetilde{H}_\mu[X;q,t],
\]
by Theorem~\ref{thm: HHL formula}.
\begin{rmk}
Several generalizations of modified Macdonald polynomials for general diagrams have been studied in \cite{BBGHT99, Ban07, CM18}. Our definition (Definition~\ref{Def: generalized modified Macdonald}) differs from the lattice diagram polynomial in \cite{BBGHT99} and generalizes $C_L[X;q,t]$ in \cite{Ban07}. With a suitable modification, the modified Macdonald polynomials $\widetilde{H}_{(D,f)}[X;q,t]$ for a filled diagram $(D,f)$ actually coincide with the \emph{weighted characteristic polynomials} for (corner) weighted Dyck paths studied in \cite{CM18}.
\end{rmk}

\begin{example}\label{ex: filled diagram and hdf}
Let $(D,f)$ be a filled diagram depicted as below:
\[
    \begin{ytableau}
    \textcolor{white}{0} & \alpha & \none\\
    \none &  & \none
    \end{ytableau}
\]
where $\alpha\in\mathbb{F}$. The table below shows the statistics needed to compute $\widetilde{H}_{(D,f)}[X;q,t]$.
\begin{center}
\begin{tabular}{|c|c|c|c|c|c|c|}
    \hline
    $w \in \mathfrak{S}_3$ & 123 & 132 & 213 & 231 & 312 & 321 \\\hline
    $\iDes(w)$ & $\emptyset$ & $\{2\}$ & $\{1\}$ & $\{1\}$ & $\{2\}$ & $\{1,2\}$ \\
    \hline
    
    $\inv_D(w)$ & 1 & 1 & $q$ & 1 & $q$ & $q$\\
    \hline
    $\maj_{(D,f)}(w)$ & 1 & $\alpha$ & 1 & $\alpha$ & 1 & $\alpha$\\
    \hline
\end{tabular}
\end{center}

Thus, the modified Macdonald polynomial for $(D,f)$ is given by
\[
    \widetilde{H}_{(D,f)}[X;q,t] = F_{\emptyset} + (q+\alpha)F_{\{1\}} + (q+\alpha)F_{\{2\}} + q\alpha F_{\{1,2\}}.
\]
\end{example}

Two distinct filled diagrams $(D,f)$ and $(D',f')$ can give the same modified Macdonald polynomials. For example, given a filled diagram $(D,f)$ where 
\[
    D=[D^{(1)},D^{(2)},\dots,D^{(\ell)}],
\]
consider the diagram
\[
    D'=[D^{(2)},\dots,D^{(\ell)},D^{(1)}+1],
\]
where $I+1=\{a+1:a\in I\}$ for an interval $I$. In other words, $D'$ is the diagram obtained by moving the leftmost column of $D$ to the rightmost end and shifting it up by one row. The cells in $D$ and $D'$ are naturally in bijection, and we define the filling $f'$ on $D'$ by inheriting the filling $f$ on $D$. We define $\cycling (D,f):=(D',f')$; see Figure~\ref{fig: cycling} for an example. Then the following is a routine exercise.

\begin{lem}\label{lem: cycling}(Cycling rule)
Let $(D,f)$ be a filled diagram and $(D',f')=\cycling(D,f)$. Then we have
\[
    \widetilde{H}_{(D,f)}[X;q,t] = \widetilde{H}_{(D',f')}[X;q,t].
\]
\end{lem}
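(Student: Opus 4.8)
The plan is to reduce the statement to the Haglund--Haiman--Loehr-type expansion that defines $\widetilde{H}_{(D,f)}$ and to exhibit a bijection on permutations that matches the statistics term by term. Write $D=[D^{(1)},\dots,D^{(\ell)}]$ and $D'=[D^{(2)},\dots,D^{(\ell)},D^{(1)}+1]$. The reading order $N_D$ on cells of $D$ (row by row, top to bottom, left to right) is compared with the reading order $N_{D'}$ on $D'$. Since moving the leftmost column one cell up realigns rows, the two readings differ only by a single global cyclic shift: there is a permutation $\sigma\in\mathfrak{S}_{|D|}$, independent of the filling, with $N_{D'} = \sigma \circ N_D$ under the natural bijection of cells. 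First I would describe $\sigma$ explicitly and check that the induced map $w\mapsto w\circ\sigma^{-1}$ on $\mathfrak{S}_{|D|}$ is the desired bijection.

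Next I would verify that this bijection preserves all three ingredients appearing in $\widetilde{H}_{(D,f)}[X;q,t] = \sum_{w}\stat_{(D,f)}(w)F_{\iDes(w)}$. For $\inv$: a pair of cells $(u,v)$ in $D$ is an inversion with respect to $w$ exactly when the corresponding pair in $D'$ is an inversion with respect to $w\circ\sigma^{-1}$, because the two ``attacking'' relations (same row with $j<j'$, or consecutive rows with $j>j'$) are preserved once one checks what happens to the single relocated column: a cell in column $1$ of $D$ sitting in row $i$ becomes a cell in the last column of $D'$ sitting in row $i+1$, and one checks the relevant same-row and adjacent-row pairs still match up. For $\maj$: a cell $u=(i,j)$ is a descent iff its value exceeds the value directly below it; since $f'$ is inherited cell-by-cell from $f$ and the ``cell directly below'' relation is unchanged under the cyclic relabeling (the moved column still has its own bottom cell below its non-bottom cells), $\maj_{(D,f)}(w)=\maj_{(D',f')}(w\circ\sigma^{-1})$. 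Finally, $\iDes$ depends only on the descent set of $w^{-1}$, i.e.\ on which values $a, a+1$ satisfy $w^{-1}(a)>w^{-1}(a+1)$; I would choose the bijection (or equivalently re-index $N_D$) so that $\iDes$ is literally preserved, which is possible because a global cyclic shift of the reading order corresponds to composing $w$ on the \emph{right} by $\sigma$, leaving $w^{-1}$'s descent structure intact up to the same relabeling.

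The main obstacle is the bookkeeping around the one relocated column: one must check carefully that the adjacency relations defining $\inv$ (especially the ``$u=(i,j), v=(i-1,j')$ with $j>j'$'' case, which mixes two consecutive rows) behave correctly when column $1$ is removed from the far left and reinserted, shifted up by one, at the far right. Because $+1$ on the row indices exactly compensates the loss of the leftmost column, the rows of $D'$ realign so that each row of $D'$ is a cyclic rotation of the corresponding ``staircase-shifted'' row of $D$; the key point to nail down is that this rotation does not create or destroy any same-row or adjacent-row attacking pair. Once this is confirmed, the equality of the two polynomials follows immediately by summing $\stat \cdot F_{\iDes}$ over the bijection. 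I would present this verification as a short lemma about how $N_D$, the inversion relation, and the ``below'' relation transform under $\cycling$, and then the proof of Lemma~\ref{lem: cycling} is one line.
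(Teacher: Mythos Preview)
Your approach is on the right track but you have overcomplicated it, and in doing so you have introduced a spurious difficulty with $\iDes$ that does not actually arise.

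The permutation $\sigma$ you posit is in fact the \emph{identity}. That is the entire point of the ``$+1$'' shift in the definition of $\cycling$: it is chosen precisely so that the reading order $N_{D'}$ on $D'$ coincides with $N_D$ under the natural cell bijection. To see this, note that a cell $(i,j)$ of $D$ with $j\ge 2$ maps to $(i,j-1)$ in $D'$, while $(i,1)$ maps to $(i+1,\ell)$. Now compare any two cells of $D$ in the $N_D$ order versus their images in the $N_{D'}$ order; in every case (both in columns $\ge 2$; one in column $1$; both in column $1$) the relative order is preserved. Once $\sigma=\id$, the bijection on $\mathfrak{S}_{|D|}$ is the identity map, so $\iDes$ is trivially preserved, and it remains only to check that the attacking-pair relation and the ``cell directly below'' relation are carried to themselves under the cell bijection. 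Both checks are routine: a same-row pair $(i,1),(i,j')$ in $D$ becomes the adjacent-row pair $(i+1,\ell),(i,j'-1)$ in $D'$, and an adjacent-row pair $(i,j),(i-1,1)$ becomes the same-row pair $(i,j-1),(i,\ell)$; all other pairs translate directly. This is exactly the paper's two-line proof: for every $w$ one has $\inv_D(w)=\inv_{D'}(w)$ and $\maj_{(D,f)}(w)=\maj_{(D',f')}(w)$.

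A word of caution about your $\iDes$ discussion: if $\sigma$ \emph{were} nontrivial, then $w\mapsto w\sigma$ sends $w^{-1}$ to $\sigma^{-1}w^{-1}$, and left-multiplication by a fixed permutation does not preserve descent sets in general. Your claim that it leaves ``$w^{-1}$'s descent structure intact up to the same relabeling'' is not correct. Fortunately, since $\sigma=\id$, this issue never materializes.
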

\begin{proof}
For any $w\in \mathfrak{S}_{|D|}$, we simply have
\begin{align*}
    \inv_D(w)=\inv_{D'}(w), \qquad  \maj_{(D,f)}(w)=\maj_{(D',f')}(w),
\end{align*}
which gives $\stat_{(D,f)}(w)=\stat_{(D',f')}(w)$.
\end{proof}

\begin{figure}[ht] \centering
\begin{tikzpicture}[scale=0.9]

\draw[-] (0,0)--(3,0);
\draw[-] (0,1)--(3,1);
\draw[-] (0,2)--(3,2);
\draw[-] (0,3)--(1,3);

\draw[-] (0,0)--(0,3);
\draw[-] (1,0)--(1,3);
\draw[-] (2,0)--(2,2);
\draw[-] (3,0)--(3,2);

\filldraw[black] (0.5,2.3) circle (0.000001pt) node[anchor=south] {$a$};
\filldraw[black] (0.5,1.3) circle (0.000001pt) node[anchor=south] {$b$};

\filldraw[black] (1.5,1.3) circle (0.000001pt) node[anchor=south] {$c$};
\filldraw[black] (2.5,1.3) circle (0.000001pt) node[anchor=south] {$d$};

\draw[->] (5,2)--(7,2);
\filldraw[black] (6,2) circle (0.000001pt) node[anchor=south] {$\cycling$};

\begin{scope}[shift={(8,0)}]
\draw[-] (0,0)--(2,0);
\draw[-] (0,1)--(3,1);
\draw[-] (0,2)--(3,2);
\draw[-] (2,3)--(3,3);
\draw[-] (2,4)--(3,4);

\draw[-] (0,0)--(0,2);
\draw[-] (1,0)--(1,2);
\draw[-] (2,0)--(2,4);
\draw[-] (3,1)--(3,4);

\filldraw[black] (0.5,1.3) circle (0.000001pt) node[anchor=south] {$c$};
\filldraw[black] (1.5,1.3) circle (0.000001pt) node[anchor=south] {$d$};

\filldraw[black] (2.5,3.3) circle (0.000001pt) node[anchor=south] {$a$};
\filldraw[black] (2.5,2.3) circle (0.000001pt) node[anchor=south] {$b$};

\end{scope}

\end{tikzpicture}
\caption{Applying the operator $\cycling$.}\label{fig: cycling}
\end{figure}

In Section~\ref{Sec: Butler permutations, main Lemmas} and Section~\ref{Sec: Proof of Schur positivities}, we provide various relations between modified Macdonald polynomials (or LLT polynomials). We conclude this section with the definitions that will be used throughout the rest of the paper.

\begin{definition}
    Let ${w}=a_1\cdots a_n$ be a word consisting of positive integers. A \emph{standardization} $\std({w})$ of ${w}$ is defined to be the unique permutation $\sigma\in\mathfrak{S}_n$ such that 
    \[
    w_i< w_j \text{ or } (w_i=w_j \text{ and } i<j) \text{ if and only if } \std(w)_i<\std(w)_j.
    \] 
    For example, we have $\std (3155)=2134$. Given a map $\phi:\mathfrak{S}_n\rightarrow\mathfrak{S}_n$, we abuse the notation and refer to $\phi({w})$ as the word $w_{\sigma(1)}\cdots w_{\sigma(n)}$, where $\sigma = \std(w)^{-1}\circ \phi(\std(w))$. For example, if $\phi$ sends $2134$ to $4312$, then $\phi(3155)=5513$.

   For a permutation $w\in\mathfrak{S}_n$ and a subset $C=\{c_1<\dots<c_r\}$ of $[n]$, we define 
   \begin{equation*}
       w\vert_{C}:=w_{c_1}\cdots w_{c_r},
   \end{equation*}
   which we frequently abbreviate as $w_C$.
   Let $D$ be a diagram and $E\subseteq D$ be its subdiagram. Given a permutation $w\in \mathfrak{S}_{|D|}$, we define
    \begin{equation*}
        w{\downarrow}^{D}_{E}:=w\vert_{\{N_D(u):u \in E\}}.
    \end{equation*}
     Now given a map $\phi: \mathfrak{S}_{|E|}\rightarrow\mathfrak{S}_{|E|}$, we define $ \phi{\uparrow}^{D}_{E}:\mathfrak{S}_{|D|}\rightarrow\mathfrak{S}_{|D|}$ to be the (unique) map satisfying
    \begin{align*}
        \left(\phi{\uparrow}^{D}_{E}(w)\right){\downarrow}_{E}^{D}=\phi(w{\downarrow}^{D}_{E}),\qquad \left(\phi{\uparrow}^{D}_{E}(w)\right){\downarrow}_{D\setminus E}^{D}=w{\downarrow}^{D}_{D\setminus E}.
    \end{align*}
In other words, the map $\phi{\uparrow}^{D}_{E}$ preserves the restriction $w{\downarrow}^{D}_{D\setminus E}$ of the word $w$ on $D\setminus E$ and changes the restriction $w{\downarrow}^{D}_{E}$ of $w$ on $E$ via the map $\phi$. 
\end{definition}

\begin{example}
Figure~\ref{fig: diagram and sub-diagram} shows a diagram $D$ with a total order $N_D$ written on each cell. A subdiagram $E$ is outlined with a blue boundary. We have $\{N_D(u) : u \in E\} = \{3,5,7,8\}$, which means that for a permutation $w = w_1 \cdots w_9 =629417538 \in \mathfrak{S}_9$,
\[
w\downarrow_{E}^{D} = w_3 w_5 w_7 w_8 = 9153.
\]
Therefore, denoting $w' = \phi\uparrow_{E}^{D}(w)$, we have $w'_i = w_i$ if $i \notin \{3,5,7,8\}$, and the word $w'_3w'_5w'_7 w'_8$ is given by $\phi(w_3w_5w_7w_8)=\phi(9153)$.  
For example, if a map $\phi : \mathfrak{S}_4 \to \mathfrak{S}_4$ sends $4132$ to $1423$, then $\phi(9153)=1935$, and 
\[
\phi\uparrow_{E}^{D}(62\underline{9}4\underline{1}7\underline{5}\underline{3}8) = 62\underline{1}4\underline{9}7\underline{3}\underline{5}8.
\]

\begin{figure}[ht] \centering
\begin{tikzpicture}[scale=0.9]

\draw[-] (0,0)--(4,0);
\draw[-] (0,1)--(4,1);
\draw[-] (0,2)--(2,2);
\draw[-] (0,3)--(2,3);
\draw[-] (0,4)--(1,4);

\draw[-] (0,0)--(0,4);
\draw[-] (1,0)--(1,4);
\draw[-] (2,0)--(2,3);
\draw[-] (3,0)--(3,1);
\draw[-] (4,0)--(4,1);

\draw[blue,very thick] (1,0)--(1,3);
\draw[blue,very thick] (1,3)--(2,3);
\draw[blue,very thick] (2,3)--(2,1);
\draw[blue,very thick] (2,1)--(3,1);
\draw[blue,very thick] (3,1)--(3,0);
\draw[blue,very thick] (3,0)--(1,0);

\filldraw[black] (0.5,3.3) circle (0.000001pt) node[anchor=south] {$1$};
\filldraw[black] (0.5,2.3) circle (0.000001pt) node[anchor=south] {$2$};
\filldraw[black] (0.5,1.3) circle (0.000001pt) node[anchor=south] {$4$};
\filldraw[black] (0.5,0.3) circle (0.000001pt) node[anchor=south] {$6$};

\filldraw[black] (1.5,2.3) circle (0.000001pt) node[anchor=south] {$3$};
\filldraw[black] (1.5,1.3) circle (0.000001pt) node[anchor=south] {$5$};
\filldraw[black] (1.5,0.3) circle (0.000001pt) node[anchor=south] {$7$};

\filldraw[black] (2.5,0.3) circle (0.000001pt) node[anchor=south] {$8$};
\filldraw[black] (3.5,0.3) circle (0.000001pt) node[anchor=south] {$9$};

\end{tikzpicture}
\caption{A diagram and its subdiagram.}
\label{fig: diagram and sub-diagram}
\end{figure}
\end{example}

\section{Column exchange rule and Butler permutations}\label{Sec: Butler permutations, main Lemmas} In this section, we prove Proposition~\ref{lem: column exchange} and Proposition~\ref{lem: main LLT lemma}, which will be central tools for the proof of Theorem~\ref{thm: first main, F-expansion} and Theorem~\ref{thm: second main, s-positivities}. We also describe the permutations, which we call Butler permutations, that appear in the $F$-expansion for $\I_{\lambda,\mu}[X;q,t]$. 
 
\subsection{Column exchange rule}
In this subsection, we introduce the \emph{column exchange rule} (Proposition~\ref{lem: column exchange}). Given a filled diagram $(D,f)$ satisfying a certain (local) condition \eqref{eq: column exchange condition}, Proposition~\ref{lem: column exchange} enables us to find another filled diagram $(D',f')=S_j(D,f)$ (see Definition~\ref{def: the operator S}) such that
\begin{equation*}
    \widetilde{H}_{(D,f)}[X;q,t]=\widetilde{H}_{(D',f')}[X;q,t].
\end{equation*}

\begin{definition}\label{def: the operator S}
For positive integers $n>m$, we define $\mathcal{V}(n,m)$ to be the set of all filled diagrams $(\mu,f_{\mu})$ such that $\mu=[[n],[m]]$ and a filling $f_\mu$ on it satisfying the following condition:
\begin{equation}\label{eq: column exchange condition}
   f_\mu(i,1) = q^{-1}f_\mu(m+1,1) f_\mu(i,2), \text{ for } 1<i\leq m.  
\end{equation}
For $(\mu,f_{\mu})\in \mathcal{V}(n,m)$, we let $S(\mu,f_{\mu})$ be the filled diagram $(\lambda,f_{\lambda})$ such that $\lambda=[[m],[n]]$ and:
\begin{align}
        &f_\lambda(i,2)=f_\mu(i,1), \text{ for } i>m+1\label{eq: column exchange the first condition},\\
        &f_\lambda(m+1,2)=q^{-1}f_\mu(m+1,1)\label{eq: column exchange the second condition},\\
        &f_\lambda(i,1) = f_{\mu}(i,2) \text{ and } f_\lambda(i,2)=f_\mu(i,1), \text{ for } 1<i\leq m.  \nonumber
\end{align}
See Figure~\ref{fig: generic filled diagrams column exchange lemma} for the generic $(\mu,f_{\mu})\in\mathcal{V}(n,m)$ and the corresponding $S(\mu,f_{\mu})$.

\begin{figure}[h]
\[
    \ytableausetup{boxsize=3.3em}
    (\mu,f_{\mu}) = \begin{ytableau}
     b_{1} \\
     \vdots  \\
     b_{n-m-1} \\
    q\alpha  \\
    \alpha a_1 & a_1 \\
    \vdots & \vdots \\
    \alpha  a_{m-1} & a_{m-1} \\
     & 
    \end{ytableau}
    \qquad \qquad S(\mu,f_{\mu} ) = \begin{ytableau}
     \none &b_{1} \\
     \none &\vdots  \\
     \none &b_{n-m-1} \\
    \none & \alpha  \\
    a_1 & \alpha a_1 \\
    \vdots & \vdots \\
     a_{m-1} & \alpha a_{m-1} \\
     & 
    \end{ytableau}
\]
\caption{Any $(\mu,f_{\mu})\in\mathcal{V}(n,m)$ is of the form 
 in the left figure for suitable $a_i$'s, $b_i$'s and $\alpha$ in $\mathbb{F}$. The right figure shows the corresponding $S(\mu,f_{\mu})$.}\label{fig: generic filled diagrams column exchange lemma}
\end{figure}

Let $(D,f)$ be a filled diagram such that the restriction to the $j$ and $j+1$-th columns is in $\mathcal{V}(n,m)$ for some $n$ and $m$. We define $S_j(D,f)$ to be the filled diagram obtained from $(D,f)$ by applying the map $S$ to the two columns ($j$ and $j+1$-th columns).

\end{definition}

\begin{example}
Consider the partition $\lambda=(3,3,2,1)$. The left of Figure~\ref{fig: application of column exchange picture} shows the filled diagram $(\lambda,f^{\st}_{\lambda})$. The restriction to the first two columns of $(\lambda,f_{\lambda}^{\st})$ is in $\mathcal{V}(4,3)$ so we can apply the operator $S_1$ and get the filled diagram in the middle. Now the restriction to the second and third columns of $S_1(\lambda,f_{\lambda}^{\st})$ is in $\mathcal{V}(4,2)$, applying $S_2$ gives the filled diagram on the right.
    \begin{figure}[h] \centering
\begin{tikzpicture}[scale=0.9]

\draw[-] (0,0)--(3,0);
\draw[-] (0,1)--(3,1);
\draw[-] (0,2)--(3,2);
\draw[-] (0,3)--(2,3);
\draw[-] (0,4)--(1,4);

\draw[-] (0,0)--(0,4);
\draw[-] (1,0)--(1,4);
\draw[-] (2,0)--(2,3);
\draw[-] (3,0)--(3,2);

\filldraw[black] (0.5,3.3) circle (0.000001pt) node[anchor=south] {$t$};
\filldraw[black] (0.5,2.3) circle (0.000001pt) node[anchor=south] {$q^{-1}t^2$};
\filldraw[black] (0.5,1.3) circle (0.000001pt) node[anchor=south] {$q^{-2}t^3$};

\filldraw[black] (1.5,2.3) circle (0.000001pt) node[anchor=south] {$t$};
\filldraw[black] (1.5,1.3) circle (0.000001pt) node[anchor=south] {$q^{-1}t^2$};

\filldraw[black] (2.5,1.3) circle (0.000001pt) node[anchor=south] {$t$};

\filldraw[black] (1.5,-1) circle (0.000001pt) node[anchor=south] {$(\lambda,f_{\lambda}^{\st})$};

\begin{scope}[shift={(5,0)}]
\draw[-] (0,0)--(3,0);
\draw[-] (0,1)--(3,1);
\draw[-] (0,2)--(3,2);
\draw[-] (0,3)--(2,3);
\draw[-] (1,4)--(2,4);

\draw[-] (0,0)--(0,3);
\draw[-] (1,0)--(1,4);
\draw[-] (2,0)--(2,4);
\draw[-] (3,0)--(3,2);

\filldraw[black] (1.5,3.3) circle (0.000001pt) node[anchor=south] {$q^{-1}t$};
\filldraw[black] (1.5,2.3) circle (0.000001pt) node[anchor=south] {$q^{-1}t^2$};
\filldraw[black] (1.5,1.3) circle (0.000001pt) node[anchor=south] {$q^{-2}t^3$};

\filldraw[black] (0.5,2.3) circle (0.000001pt) node[anchor=south] {$t$};
\filldraw[black] (0.5,1.3) circle (0.000001pt) node[anchor=south] {$q^{-1}t^2$};

\filldraw[black] (2.5,1.3) circle (0.000001pt) node[anchor=south] {$t$};

\filldraw[black] (1.5,-1) circle (0.000001pt) node[anchor=south] {$S_1(\lambda,f_{\lambda}^{\st})$};
\end{scope}

\begin{scope}[shift={(10,0)}]
\draw[-] (0,0)--(3,0);
\draw[-] (0,1)--(3,1);
\draw[-] (0,2)--(3,2);
\draw[-] (0,3)--(1,3);\draw[-] (2,3)--(3,3);
\draw[-] (2,4)--(3,4);

\draw[-] (0,0)--(0,3);
\draw[-] (1,0)--(1,3);
\draw[-] (2,0)--(2,4);
\draw[-] (3,0)--(3,4);

\filldraw[black] (2.5,3.3) circle (0.000001pt) node[anchor=south] {$q^{-1}t$};
\filldraw[black] (2.5,2.3) circle (0.000001pt) node[anchor=south] {$q^{-2}t^2$};
\filldraw[black] (2.5,1.3) circle (0.000001pt) node[anchor=south] {$q^{-2}t^3$};

\filldraw[black] (0.5,2.3) circle (0.000001pt) node[anchor=south] {$t$};
\filldraw[black] (0.5,1.3) circle (0.000001pt) node[anchor=south] {$q^{-1}t^2$};

\filldraw[black] (1.5,1.3) circle (0.000001pt) node[anchor=south] {$t$};

\filldraw[black] (1.5,-1) circle (0.000001pt) node[anchor=south] {$S_2 S_1(\lambda,f_{\lambda}^{\st})$};
\end{scope}

\end{tikzpicture}
\caption{Applying the operators $S_j$.}\label{fig: application of column exchange picture}
\end{figure}
\end{example}

\begin{proposition}\label{lem: column exchange}
For positive integers $n>m$, let $(\mu,f_{\mu})\in\mathcal{V}(n,m)$ and $(\lambda,f_{\lambda})=S(\mu,f_{\mu})$.

Then there is a bijection $\phi_{n,m}:\mathfrak{S}_{n+m}\rightarrow\mathfrak{S}_{n+m}$ satisfying the following three conditions:
\begin{align*}
    (\phi1)\quad& \stat_{(\mu,f_{\mu})}(w) = \stat_{(\lambda,f_{\lambda})}(\phi_{n,m}(w)), \\
    (\phi2)\quad& \iDes(w) = \iDes(\phi_{n,m}(w)), \text{ and}\\
    (\phi3)\quad& \text{denoting $w'=\phi_{n,m}(w)$, we have } \\ &\begin{cases}
     \{w_{n-m+2i-1}, w_{n-m+2i}\} = \{w'_{n-m+2i-1}, w'_{n-m+2i}\} & \text{  for } 1\le i \le m, \text{ and }
    \\
    w_i=w'_i  & \text{  for } 1\le i \le n-m. 
    \end{cases}
\end{align*}
\end{proposition}

The proof of Proposition~\ref{lem: column exchange} will be deferred to the latter part of this section; for now, we first present an immediate consequence of it.

\begin{corollary}\label{cor: column exchange}
Let $(D,f)$ be a filled diagram such that the restriction to the $j$ and $j+1$-th columns is in $\mathcal{V}(n,m)$. We denote $C_i$ to be the set of $N_D(u)$ where $u$ ranges over cells in the $i$-th row of $D$. Then there is a bijection $\phi: \mathfrak{S}_{|D|}\rightarrow \mathfrak{S}_{|D|}$ satisfying the following three conditions:
\begin{align*}
    (\phi1)\quad& \stat_{(D,f)}(w) = \stat_{S_j(D,f)}(\phi(w)), \\
    (\phi2)\quad& \iDes(w) = \iDes(\phi(w)), \text{ and}\\
    (\phi3)\quad& \text{denoting $w'=\phi(w)$, we have } \{w_{a}: a \in C_i\} = \{w'_{a}: a \in C_i\}  \text{  for all } i. 
\end{align*}
In particular, we have
\begin{equation}\label{eq: column exchange final} \widetilde{H}_{(D,f)}[X;q,t]= \widetilde{H}_{S_j(D,f)}[X;q,t].\end{equation} 
\end{corollary}
\begin{proof}
Let $E$ be the subdiagram of $D$ obtained by restricting to the $j$ and $(j+1)$-th columns. Assume that we have constructed a map $\phi_{n,m}$ in Proposition~\ref{lem: column exchange}. Then we claim that $\phi = (\phi_{n,m})\uparrow_{E}^{D}$ is the desired bijection.

Let $w \in \mathfrak{S}_{|D|}$ and denote $w' = \phi(w)$. For $1 \leq i \leq m$, let $b_i = N_D(u)$ for a cell $u = (i,j)$ in $D$. Then, obviously, we have $b_i+1 = N_D(u')$ for $u' = (i,j+1)$. Since $\phi_{n,m}$ satisfies its property $(\phi3)$ in Proposition~\ref{lem: column exchange}, we conclude that
\begin{equation}\label{123}
w_a = w'_a \quad \text{if } a \neq b_i, b_i+1, \hspace{1mm}\text{and} \hspace{2mm} \{w_{b_i}, w_{b_i+1}\} = \{w'_{b_i}, w'_{b_i+1}\} \quad \text{for } 1 \leq i \leq m.
\end{equation}
Therefore, $\phi$ satisfies $(\phi3)$.

Pick some integer $1 \le r \le |D|-1$ and take $u,v \in D$ such that $w_{N_D(u)} = r$ and $w_{N_D(v)} = r+1$. If $u,v \notin E$, then it is obvious that $r \in \iDes(w)$ if and only if $r \in \iDes(w')$. Now assume $u \in E$ and $v \notin E$, and further assume that $r$ is of the form $(i,j)$. Then we have $w'_{N(v)} = r+1$ and either $w'_{N(u)} = r$ or $w'_{N(u)+1} = r$. Therefore, we deduce that $r \in \iDes(w)$ if and only if $r \in \iDes(w')$. Applying similar reasoning, we obtain that if only one of $r$ and $r+1$ lies in $E$, then $r \in \iDes(w)$ if and only if $r \in \iDes(w')$.
We conclue $\iDes(w) = \iDes(w')$ unless there exists $r$ such that $\{w_{b_i}, w_{b_i+1}\} = \{w'_{b_i}, w'_{b_i+1}\} = \{r, r+1\}$ for some $i$, with $w_{b_i} = w'_{b_i+1}$ and $w_{b_i+1} = w'_{b_i}$. Assume that such an $r$ exists, and let $y = \std(w\downarrow^{D}_{E})$ and $y' = \std(w'\downarrow^{D}_{E})$. We further denote by $t$ the position where $r$ is the $t$-th smallest letter among the letters in $w\downarrow^{D}_{E}$; that is, $r$ is mapped to $t$ under standardization. Then $t$ belongs to exactly one of $\iDes(y)$ and $\iDes(y')$. Since $\phi_{n,m}$ satisfies its property $(\phi2)$, we have
\[
\iDes(y) = \iDes(\phi_{n,m}(y)) = \iDes(y'),
\]
which is a contradiction. Thus, $\phi$ satisfies $(\phi2)$.

It remains to show that $\phi$ satisfies $(\phi1)$. First, denote $(D', f') = S_j(D, f)$. Let $L_1$ be the set of inversions $(u,v)$ with respect to $w$ such that $u,v \notin E$; $L_2$ the set of inversions $(u,v)$ with respect to $w$ such that exactly one of $u,v$ belongs to $E$; and $L_3$ the set of inversions $(u,v)$ with respect to $w$ such that $u,v \in E$.  
We further define $P_1$ to be the set of descents $u$ with respect to $w$ such that $u \notin E$, and $P_2$ the set of descents $u$ with respect to $w$ such that $u \in E$.  
Similarly, we define $L'_1, L'_2, L'_3, P'_1$, and $P'_2$ for $w'$.  

Then, obviously, we have $L_1 = L'_1$ and $P_1 = P'_1$, once we naturally identify the cells in $D$ and $D'$ other than those in the $j$ or $(j+1)$-th columns. Moreover, we have $|L_2| = |L'_2|$ due to \eqref{123}. Therefore, we conclude that
\begin{align*}
    \stat_{(D,f)}(w) &= \inv_{D}(w) \maj_{(D,f)}(w) \\
    &= \left(q^{|L_1|+|L_2|} \prod_{u \in P_1} f(u)\right) \stat_{(E,g)}(w\downarrow^{D}_{E}) \\
    &= \left(q^{|L'_1|+|L'_2|} \prod_{u \in P'_1} f'(u)\right) \stat_{(E',g')}(w'\downarrow^{D'}_{E'}) \\
    &= \stat_{(D',f')}(w'),
\end{align*}
where $(E, g)$ (respectively, $(E', g')$) is the filled diagram obtained from $(D, f)$ (respectively, $(D', f')$) by restricting to the $j$ and $(j+1)$-th columns. Note that $$\stat_{(E,g)}(w\downarrow^{D}_{E}) = \stat_{(E',g')}(w'\downarrow^{D'}_{E'})$$ follows from the fact that $\phi_{n,m}$ satisfies its property $(\phi1)$.

\end{proof}

\begin{rmk} Let $(D,f)=(\mu,f^{\st}_\mu)$ for a partition $\mu$ and standard filling $f^{\st}_\mu$. It is easy to check that $S_j(D,f)$ is well-defined as long as $j<\mu_1$ and $\mu'_j>\mu'_{j+1}$. Then \eqref{eq: column exchange final} can be also deduced from \cite[Theorem 5.1.1]{HHL08}. However, their proof was not a bijective proof, while ours gives a bijective proof. In addition, we would like to highlight advantages of Corollary \ref{cor: column exchange}.

First of all, \eqref{eq: column exchange final} implies that there exists a bijection $\phi: \mathfrak{S}_{|D|}\rightarrow \mathfrak{S}_{|D|}$ satisfying conditions $(\phi1)$ and $(\phi2)$. Our bijection also satisfies $(\phi3)$ answering a question of Haglund \cite[Conjecture 22]{AS17}. Moreover, we rely on a map constructed in Proposition \ref{lem: column exchange} in the proof of Proposition~\ref{lem: main LLT lemma} which is the main ingredient for the proof of our main theorem (Theorem \ref{thm: first main, F-expansion}). Finally, we may apply Corollary \ref{cor: column exchange}  for $(D,f)$ other than $(\mu,f^{\st}_\mu)$.
\end{rmk}

The following definition and lemma will be used in the proof of Proposition~\ref{lem: column exchange}. 
\begin{definition}
    For a permutation $w\in \mathfrak{S}_n$, we define
    \begin{align*}
        \overline{\iDes}(w)=\begin{cases*}
            \iDes(w)\setminus\{w_n\} \qquad \text{ if } w_{n}=w_{n-1}-1\\
            \iDes(w) \qquad \text{ otherwise}
        \end{cases*}.
    \end{align*}
    For example, we have $\iDes(14532)=\{2,3\}$ while $\overline{\iDes}(14532)=\{3\}$.
\end{definition}

\begin{lem}\label{lem: ides preserving}
    For permutations $w\in \mathfrak{S}_n$ and $\sigma\in \mathfrak{S}_{n-2}$ such that $\{n-3,n-2\}=\{\sigma_{n-3},\sigma_{n-2}\}$, consider 
    \begin{equation*}
        w^{(1)}=w_{\sigma_1}\cdots w_{\sigma_{n-2}} w_{n-1} w_n, \qquad  w^{(2)}=w_{\sigma_1}\cdots w_{\sigma_{n-2}}w_{n}w_{n-1}. 
    \end{equation*} 
    Then:
        \begin{enumerate}
            \item If \hspace{1.5mm}$\iDes(\std(w_1 \cdots w_{n-2}))=\iDes(\std(w_{\sigma_1} \cdots w_{\sigma_{n-2}}))$, we have
            \begin{enumerate}
                \item $\iDes(w)=\iDes(w^{(1)})$,
                \item $\overline{\iDes}(w)=\overline{\iDes}(w^{(1)})=\overline{\iDes}(w^{(2)})$, and
                \item $\iDes(w)=\iDes(w^{(2)})$ if $|w_{n}-w_{n-1}|>1$.
            \end{enumerate}
            \item If \hspace{1.5mm}$\overline{\iDes}(\std(w_1 \cdots w_{n-2}))=\overline{\iDes}(\std(w_{\sigma_1}\cdots w_{\sigma_{n-2}}))$ and $|w_{n-2}-w_{n-3}|>1$, we have
            \begin{enumerate}
                \item $\iDes(w)=\iDes(w^{(1)})$,
                \item $\overline{\iDes}(w)=\overline{\iDes}(w^{(1)})=\overline{\iDes}(w^{(2)})$, and
                \item $\iDes(w)=\iDes(w^{(2)})$ if $|w_{n}-w_{n-1}|>1$.
            \end{enumerate}
        \end{enumerate}
\end{lem}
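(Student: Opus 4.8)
The plan is to reduce all six conclusions to the single statement $\iDes(w)=\iDes(w^{(1)})$ (part (a) of each case), and then to pass from $w^{(1)}$ to $w^{(2)}$ using a general fact about transposing the last two letters of a word. Throughout I rely on the characterization that $i\in\iDes(v)$ if and only if the value $i+1$ occurs to the left of the value $i$ in $v$; in particular, membership of the index $i$ depends only on the relative order of the two values $i$ and $i+1$.

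First I would isolate a \emph{swap lemma}: if $z'$ is obtained from a word $z$ of length $k$ by interchanging its last two entries $z_{k-1},z_k$, then $\overline{\iDes}(z)=\overline{\iDes}(z')$ always holds, while $\iDes(z)=\iDes(z')$ holds provided $|z_{k-1}-z_k|>1$. This is a short three-case check, according to whether $z_{k-1}-z_k$ equals $+1$, equals $-1$, or has absolute value greater than $1$. The swap alters only the relative order of the pair $\{z_{k-1},z_k\}$, so the sole index whose $\iDes$-membership can change is $\min(z_{k-1},z_k)$, and only when the two values are consecutive; in each of the two consecutive cases one checks that the correction built into $\overline{\iDes}$ (removing the terminal descent) exactly absorbs this change, while in the non-consecutive case neither set moves.

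Next, since $w$ and $w^{(1)}$ share their last two letters $w_{n-1},w_n$, and $\overline{\iDes}$ is determined by $\iDes$ together with those last two entries, the conclusion $\iDes(w)=\iDes(w^{(1)})$ automatically upgrades to $\overline{\iDes}(w)=\overline{\iDes}(w^{(1)})$. Granting $\iDes(w)=\iDes(w^{(1)})$, conclusion (b) follows by applying the first clause of the swap lemma to the transposition $w^{(1)}\mapsto w^{(2)}$, and conclusion (c) follows from the second clause under the hypothesis $|w_n-w_{n-1}|>1$. Thus everything reduces to proving (a). For this I compare $w$ and $w^{(1)}$ index by index: because $w^{(1)}$ fixes the last two positions and only rearranges the first $n-2$ letters, whose value set I call $V$, the membership of an index $i$ can differ between $w$ and $w^{(1)}$ only when both $i$ and $i+1$ lie in $V$. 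So it suffices that, for every pair of consecutive integers $i,i+1\in V$, the relative order of $i$ and $i+1$ agrees in $u:=w_1\cdots w_{n-2}$ and in $u':=w_{\sigma_1}\cdots w_{\sigma_{n-2}}$; and since consecutive integers in $V$ are adjacent within $V$, this relative order is recorded by the index $\std(i)$ in $\iDes(\std u)$ versus $\iDes(\std u')$. In Part (1) the hypothesis gives $\iDes(\std u)=\iDes(\std u')$ outright, settling (a).

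In Part (2) the hypothesis is only $\overline{\iDes}(\std u)=\overline{\iDes}(\std u')$, so $\iDes(\std u)$ and $\iDes(\std u')$ may genuinely differ; pinning down exactly where is the main obstacle. Since $\overline{\iDes}$ deletes at most one index (the terminal descent) from each of $\iDes(\std u)$ and $\iDes(\std u')$, the two sets differ only within the union of the two deleted indices. The constraint $\{\sigma_{n-3},\sigma_{n-2}\}=\{n-3,n-2\}$ forces both $u$ and $u'$ to end in the two values $\{w_{n-3},w_{n-2}\}$, so each deleted index, when present, corresponds precisely to the pair $\{w_{n-3},w_{n-2}\}$; hence $\iDes(\std u)$ and $\iDes(\std u')$ can disagree only at the single index attached to that pair. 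The assumption $|w_{n-2}-w_{n-3}|>1$ makes $\{w_{n-3},w_{n-2}\}$ non-consecutive as integers, so this index is never of the form $\std(i)$ for consecutive integers $i,i+1\in V$. Consequently the relative orders of all consecutive-integer pairs within $V$ do agree between $u$ and $u'$, yielding $\iDes(w)=\iDes(w^{(1)})$ and completing (a), and with it Part (2).
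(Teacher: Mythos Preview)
Your argument is correct. The paper itself omits the proof of this lemma, stating only that it is ``straightforward to check,'' so there is no approach to compare against; you have supplied a clean and complete verification where the authors left none.

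Your organization is efficient: the swap lemma isolating the effect of transposing the last two letters on $\iDes$ and $\overline{\iDes}$ reduces (b) and (c) of both parts to (a), and your treatment of (a) correctly identifies that the only indices $i\in\iDes$ that can move under the rearrangement $w\mapsto w^{(1)}$ are those with $i,i+1$ both in the value set $V=\{w_1,\dots,w_{n-2}\}$. The delicate point is Part~(2), and your handling is right: the hypothesis $\{\sigma_{n-3},\sigma_{n-2}\}=\{n-3,n-2\}$ pins the last two positions of both $u$ and $u'$ to the values $\{w_{n-3},w_{n-2}\}$, so the sole index at which $\iDes(\std u)$ and $\iDes(\std u')$ can differ (given equal $\overline{\iDes}$) is the rank $j$ with $\{V_{j},V_{j+1}\}=\{w_{n-3},w_{n-2}\}$; since $|w_{n-3}-w_{n-2}|>1$ these are not consecutive integers, hence this $j$ is never one of the indices that matter for comparing $\iDes(w)$ with $\iDes(w^{(1)})$.
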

\begin{proof}
    We provide a proof for (2a), as the remaining cases are easier or follow similarly.

Let $i \in \iDes(w)$. Then there exist indices $r < s$ such that $w_r = i+1$ and $w_s = i$.  
If $s > n-2$, then $i \in \iDes(w^{(1)})$ since $w_{n-1} = w^{(1)}_{n-1}$ and $w_n = w^{(1)}_n$.  
On the other hand, if $s \leq n-2$ we cannot have $s=n-2$ and $r=n-3$. Let $t$ be an integer such that $i$ is the $t$-th smallest letter among letters in $w_{1}\cdots w_{n-2}$. Then $t\in\overline{\iDes}(\std(w_1 \cdots w_{n-2}))$. Note that $i+1$ appears before $i$ in $w_{\sigma_1} \cdots w_{\sigma_{n-2}}$ otherwise $t$ does not belong to $\overline{\iDes}(\std(w_{\sigma_1}\cdots w_{\sigma_{n-2}}))$. We conclude that $i \in \iDes(w^{(1)})$.  

Similarly, if $i \notin \iDes(w)$, then $i \notin \iDes(w^{(1)})$.

\end{proof}

\begin{proof}[Proof of Proposition~\ref{lem: column exchange}]
 Once we have constructed $\phi_{m+1,m}$ with the desired properties ($\phi1$), ($\phi2$) and ($\phi3$), we simply let \begin{equation*}
\phi_{n,m}=\phi_{m+1,m}\uparrow_{[[m+1],[m]]}^{[[n],[m]]},
\end{equation*}
then $\phi_{n,m}$ satisfies ($\phi1$), ($\phi2$) and ($\phi3$) due to \eqref{eq: column exchange the first condition}. From now on we fix $m=n-1$. It is enough to show the claim for $(\mu^{(n)},f_{\mu^{(n)}})$ and $(\lambda^{(n)}, f_{\lambda^{(n)}})$ as in Figure~\ref{fig: filled diagrams column exchange lemma}, where $\alpha\in\mathbb{F}$ and $a_i\in\mathbb{F}$. In other words, we argue that there is a bijection \(\phi_n:=\phi_{n,n-1}: \mathfrak{S}_{2n-1}\rightarrow \mathfrak{S}_{2n-1}\) satisfying the following three conditions:
\begin{align*}
    (\phi1)\quad& \stat_{(\mu^{(n)},f_{\mu^{(n)}})}(w) = \stat_{(\lambda^{(n)},f_{\lambda^{(n)}})}(\phi_{n}(w)), \\
    (\phi2)\quad& \iDes(w) = \iDes(\phi_{n}(w)), \text{ and}\\
    (\phi3)\quad& \text{denoting $w'=\phi_{n}(w)$, we have } \\ &\begin{cases}
    \{w_{2i}, w_{2i+1}\} = \{w'_{2i}, w'_{2i+1}\} & \text{ for } 1\le i \le n-1, \text{ and }\\
    w_1=w'_1. &
    \end{cases}  
\end{align*}
\begin{figure}[h]
\[
    \ytableausetup{boxsize=3em}
    (\mu^{(n)},f_{\mu^{(n)}}) = \begin{ytableau}
    q\alpha  \\
    \alpha a_1 & a_1 \\
    \alpha a_2 & a_2 \\
    \vdots & \vdots \\
    \alpha  a_{n-2} & a_{n-2} \\
     & 
    \end{ytableau}
    \qquad \qquad (\lambda^{(n)}, f_{\lambda^{(n)}}) = \begin{ytableau}
    \none & \alpha  \\
    a_1 & \alpha a_1 \\
    a_2 & \alpha a_2 \\
    \vdots & \vdots \\
     a_{n-2} & \alpha a_{n-2} \\
     & 
    \end{ytableau}
\]
\caption{Filled diagrams $(\mu^{(n)},f_{\mu^{(n)}})$ and $(\lambda^{(n)},f_{\lambda^{(n)}})$. }\label{fig: filled diagrams column exchange lemma}
\end{figure}

Given a filled diagram $(D,f)$, we define maps $\stat^{(1)}$ and $\stat^{(2)}$ from $\mathfrak{S}_{|D|}$ to $\mathbb{F}$ as follows:
\begin{align}\label{eq: stat1 stat2 definition}
    \stat^{(1)}_{(D,f)}(w) := \begin{cases}
    q\stat_{(D,f)}(w) \quad&\text{ if } w_{|D|-1} < w_{|D|}\\
    \alpha\stat_{(D,f)}(w) \quad&\text{ if } w_{|D|-1} > w_{|D|}
    \end{cases},\\
   \stat^{(2)}_{(D,f)}(w) := \begin{cases}
    q\alpha\stat_{(D,f)}(w) \quad&\text{ if } w_{|D|-1} < w_{|D|}\\
    \stat_{(D,f)}(w) \quad&\text{ if } w_{|D|-1} > w_{|D|}\nonumber
    \end{cases}.
\end{align}

To recursively construct $\phi_n$, we need an auxiliary bijection $\psi_n:\mathfrak{S}_{2n-1}\rightarrow\mathfrak{S}_{2n-1}$ satisfying the following three conditions:
\begin{align*}
    (\psi1)\quad& \stat^{(1)}_{(\mu^{(n)},f_{\mu^{(n)}})}(w) = \stat^{(2)}_{(\lambda^{(n)},f_{\lambda^{(n)}})}(\psi_{n}(w)), \\
    (\psi2)\quad& \overline{\iDes}(w) = \overline{\iDes}(\psi_{n}(w)), \text{ and}\\
    (\psi3)\quad& \text{denoting $w'=\psi_{n}(w)$, we have } 
    \\ &\begin{cases}
    \{w_{2i}, w_{2i+1}\} = \{w'_{2i}, w'_{2i+1}\} &\text{ for } 1\le i \le n-1, \text{ and }
    \\ w_1=w'_1.  &
    \end{cases}
\end{align*}
We proceed by induction on $n$. We will construct $\phi_{n+1}$ and $\psi_{n+1}$ from $\phi_{n}$ and $\psi_{n}$. For the initial case $n=2$, we define $\phi_2$ and $\psi_2$ by
\begin{align*}
    &\phi_2(123) = 123, \quad \phi_2(132) = 132, \quad\phi_2(213) = 231, \\ & \phi_2(231) = 213, \quad\phi_2(312)=312,  \quad\phi_2(321) = 321,\\
    &\psi_2(123) = 132,\quad \psi_2(132) = 123, \quad\psi_2(213) = 213, \\ & \psi_2(231) = 231, \quad\psi_2(312) = 321, \quad \psi_2(321) = 312.
\end{align*}
The table below shows that $\phi_2$ and $\psi_2$ indeed satisfy all desired properties $(\phi1),$ $ (\phi2)$ and $(\phi3)$ and $(\psi1), (\psi2)$, and $(\psi3)$.
\begin{center}
\begin{tabular}{|c|c|c|c|c|c|c|}
    \hline
    $w$ &  $\stat_{(\mu^{(2)},f_{\mu^{(2)}})}$  & $\stat_{(\lambda^{(2)},f_{\lambda^{(2)}})}$ & $\stat^{(1)}_{(\mu^{(2)},f_{\mu^{(2)}})}$  & $\stat^{(2)}_{(\lambda^{(2)},f_{\lambda^{(2)}})}$ & $\iDes$ & $\overline{\iDes}$ \\\hline
    123 & 1 & 1 & $q$ & $q\alpha$ &$\emptyset$ & $\emptyset$ \\\hline
    132 & $q$ & $q$ & $q\alpha$ & $q$ & $\{2\}$ & $\emptyset$\\ \hline
    213 & $q\alpha$ & $q$ & $q^2\alpha$ & $q^2\alpha$ & $\{1\}$& $\{1\}$ \\ \hline
    231 & $q$ & $q\alpha$ & $q\alpha$ & $q\alpha$ & $\{1\}$ & $\{1\}$\\ \hline
    312 & $q\alpha$ & $q\alpha$ & $q^2\alpha$ & $q^2\alpha^2$ & $\{2\}$ &$\{2\}$\\ \hline
    321 & $q^2\alpha$ & $q^2\alpha$ &$q^2\alpha^2$ & $q^2\alpha$ & $\{1,2\}$ & $\{2\}$ \\ \hline
\end{tabular}
\end{center}

Now assume that we have constructed bijections 
\[
\phi_n:\mathfrak{S}_{2n-1}\rightarrow \mathfrak{S}_{2n-1} \qquad \text{ and } \qquad \psi_n:\mathfrak{S}_{2n-1}\rightarrow \mathfrak{S}_{2n-1}
\]
satisfying $(\phi1), (\phi2)$ and $(\phi3)$ and $(\psi1), (\psi2)$ and $(\psi3)$, respectively.

\begin{table}[ht]
    \centering
    \footnotesize{\begin{tabular}{|c|c|c|c|c|}
    \hline
    &$\std(w_{[2n-2,2n+1]})$ &   $\frac{\stat_{(\mu^{(n+1)},f_{\mu^{(n+1)}})}(w)}{\stat_{(\mu^{(n)},f_{\mu^{(n)}})}(w_{[2n-1]})}$  &  $\std(w_{[2n-2,2n+1]})$ & $\frac{\stat_{(\lambda^{(n+1)},f_{\lambda^{(n+1)}})}(w)}{
   \stat_{(\lambda^{(n)},f_{\lambda^{(n)}})}(w_{[2n-1]})}$ \\\hline
     (Case1)&1234 & 1 & 1234 & 1 \\
    &2134 & 1 & 2134 & 1 \\\hline
    (Case2)&1243 & $q$ & 1243 & $q$  \\
    &2143 & $q$ & 2143 & $q$ \\\hline
    (Case3)&1324 & $q$ & 1324 & $q$ \\
    &3142 & $q$ & 3142 & $q$ \\\hline
    (Case4)&{1342} & {$qa_{n-1}$} & {1342} & {$q\alpha a_{n-1}$} \\
    &{3124} & {$\alpha a_{n-1}$} & {3124} & {$a_{n-1}$}   \\\hline
    (Case5)&{1423} & {$q a_{n-1}$} & {1423} & {$q \alpha a_{n-1}$} \\
    &{4123} & {$\alpha a_{n-1}$} & {4123} & {$a_{n-1}$} \\\hline
    (Case6)&{1432} & {$q^2 a_{n-1}$}  & {1432} & {$q^2 \alpha a_{n-1}$} \\
    &{4132} & {$q \alpha a_{n-1}$} & {4132} & {$q a_{n-1}$} \\\hline
    (Case7)&2314 & $q \alpha a_{n-1}$ & 2341 & $q  \alpha a_{n-1}$ \\
    &3214 & $q \alpha a_{n-1}$ & 3241 & $q \alpha a_{n-1}$ \\\hline
    
    (Case8)&2341 & $q a_{n-1}$ & 2314 & $q  a_{n-1}$ \\
    &3241 & $q a_{n-1}$ & 3214 & $q  a_{n-1}$ \\\hline
    (Case9)&2413 & $q \alpha a_{n-1}^2$ & 2413 & $q \alpha a_{n-1}^2$ \\
    &4231 & $q \alpha a_{n-1}^2$ & 4231 & $q \alpha a_{n-1}^2$ \\\hline

    (Case10)&{2431} & {$q^2 a_{n-1}$} & {2431} & {$q^2 \alpha a_{n-1}$} \\
    &{4213} & {$q \alpha a_{n-1}$} & {4213} &  {$q a_{n-1}$} \\\hline
    (Case11)&3412 & $q \alpha a_{n-1}^2$  & 3412 & $q \alpha a_{n-1}^2$ \\
    &4312 & $q \alpha a_{n-1}^2$ & 4312 & $q \alpha a_{n-1}^2$ \\\hline
    (Case12)&3421 & $q^2 \alpha a_{n-1}^2$ & 3421 & $q^2 \alpha a_{n-1}^2$ \\
    &4321 & $q^2 \alpha a_{n-1}^2$ & 4321 & $q^2 \alpha a_{n-1}^2$ \\\hline
\end{tabular}}
    \caption{Ratios of $\stat$.}
    \label{tab: ratios of statistic}
\end{table}

\textbf{(Construction of $\phi_{n+1}$ from $\phi_{n}$ and $\psi_{n}$)} For the filled diagrams $(\mu^{(n+1)},f_{\mu^{(n+1)}})$ and $(\lambda^{(n+1)},f_{\lambda^{(n+1)}})$, from the definition of $\stat$, \[\dfrac{\stat_{(\mu^{(n+1)},f_{\mu^{(n+1)}})}(w)}{\stat_{(\mu^{(n)},f_{\mu^{(n)}})}(w_{[2n-1]})}=q^{\chi(w_{2n-1}>w_{2n})+\chi(w_{2n}>w_{2n+1})}(\alpha a_{n-1})^{\chi(w_{2n-2}>w_{2n})}(a_{n-1})^{\chi(w_{2n-1}>w_{2n+1})}, \text{ and }\]
\[\dfrac{\stat_{(\lambda^{(n+1)},f_{\lambda^{(n+1)}})}(w)}{\stat_{(\lambda^{(n)},f_{\lambda^{(n)}})}(w_{[2n-1]})}=q^{\chi(w_{2n-1}>w_{2n})+\chi(w_{2n}>w_{2n+1})}( a_{n-1})^{\chi(w_{2n-2}>w_{2n})}(\alpha a_{n-1})^{\chi(w_{2n-1}>w_{2n+1})}\]
where
\begin{align*}
    \chi(P)=\begin{cases*}
        1 \qquad \text{if $P$ is true},\\
        0 \qquad \text{if $P$ is false}.
    \end{cases*}
\end{align*}
In particular, the ratios $\dfrac{\stat_{(\mu^{(n+1)},f_{\mu^{(n+1)}})}(w)}{\stat_{(\mu^{(n)},f_{\mu^{(n)}})}(w_{[2n-1]})}$ and $\dfrac{\stat_{(\lambda^{(n+1)},f_{\lambda^{(n+1)}})}(w)}{\stat_{(\lambda^{(n)},f_{\lambda^{(n)}})}(w_{[2n-1]})}$ only depend on $\std(w_{[2n-2,2n+1]})$; see Table~\ref{tab: ratios of statistic} for the exhaustive list.

Construction of $\phi_{n+1}$ is given in Table~\ref{tab: construction of phi}. Each case of Table~\ref{tab: ratios of statistic} has two possible values for $\std(w_{[2n-2,2n+1]})$, which we denote by $w^{(1)}$ and $w^{(2)}$ in the second column, and $w^{(3)}$ and $w^{(4)}$ in the fourth column. We claim that for each case, $\phi_{n+1}$ sends the elements in 
\[
\{w\in \mathfrak{S}_{2n+1} : \std(w_{[2n-2,2n+1]})=w^{(1)} \text{ or } \std(w_{[2n-2,2n+1]})=w^{(2)}\}
\]
bijectively to the elements in
\[
\{w\in \mathfrak{S}_{2n+1} : \std(w_{[2n-2,2n+1]})=w^{(3)} \text{ or } \std(w_{[2n-2,2n+1]})=w^{(4)}\}.
\]
We will prove the claim for (Case3), as the remaining cases are easier or follow in a similar way.

\begin{table}[h]
    \centering
    \footnotesize{\begin{tabular}{|c|c|}
    \hline
    & $\phi_{n+1}$ \\\hline
    (Case1) & $\phi_{n+1}(w)=(\phi_n(w_{[2n-1]}),w_{2n},w_{2n+1})$ .\\\hline
    (Case2) & $\phi_{n+1}(w)=(\phi_n(w_{[2n-1]}),w_{2n},w_{2n+1})$ .\\\hline
   (Case3) & $
    \phi_{n+1}(w)=\begin{cases}
    (\phi_n(w_{[2n-1]}),r,s)&\text{if $\phi_n(w_{[2n-1]})_{2n-2}<\phi_n(w_{[2n-1]})_{2n-1}$},\\
    (\phi_n(w_{[2n-1]}),s,r)&\text{if $\phi_n(w_{[2n-1]})_{2n-2}>\phi_n(w_{[2n-1]})_{2n-1}$},
    \end{cases} $ \\
    & where $r=\min(w_{2n},w_{2n+1})$ and $s=\max(w_{2n},w_{2n+1})$.\\ \hline
    (Case4) & $
    \phi_{n+1}(w)=\begin{cases}
    (\psi_n(w_{[2n-1]}),s,r)&\text{if $\psi_n(w_{[2n-1]})_{2n-2}<\psi_n(w_{[2n-1]})_{2n-1}$}\\
    (\psi_n(w_{[2n-1]}),r,s),& \text{if $\psi_n(w_{[2n-1]})_{2n-2}>\psi_n(w_{[2n-1]})_{2n-1}$},
    \end{cases} $ \\
    & where $r=\min(w_{2n},w_{2n+1})$ and $s=\max(w_{2n},w_{2n+1})$.\\ \hline
     (Case5) & $\phi_{n+1}(w)=(\psi_n(w_{[2n-1]}),w_{2n},w_{2n+1})$ .\\\hline
     (Case6) & $\phi_{n+1}(w)=(\psi_n(w_{[2n-1]}),w_{2n},w_{2n+1})$ .\\\hline
     (Case7) & $\phi_{n+1}(w)=(\phi_n(w_{[2n-1]}),w_{2n+1},w_{2n})$ .\\\hline
     (Case8) & $\phi_{n+1}(w)=(\phi_n(w_{[2n-1]}),w_{2n+1},w_{2n})$ .\\\hline
     (Case9) & $
    \phi_{n+1}(w)=\begin{cases}
    (\phi_n(w_{[2n-1]}),r,s)&\text{if $\phi_n(w_{[2n-1]})_{2n-2}<\phi_n(w_{[2n-1]})_{2n-1}$},\\
    (\phi_n(w_{[2n-1]}),s,r),& \text{if $\phi_n(w_{[2n-1]})_{2n-2}>\phi_n(w_{[2n-1]})_{2n-1}$},
    \end{cases} $ \\
    & where $r=\min(w_{2n},w_{2n+1})$ and $s=\max(w_{2n},w_{2n+1})$.\\ \hline
    (Case10) & $
    \phi_{n+1}(w)=\begin{cases}
    (\psi_n(w_{[2n-1]}),s,r)&\text{if $\psi_n(w_{[2n-1]})_{2n-2}<\psi_n(w_{[2n-1]})_{2n-1}$},\\
    (\psi_n(w_{[2n-1]}),r,s)&\text{if $\psi_n(w_{[2n-1]})_{2n-2}>\psi_n(w_{[2n-1]})_{2n-1}$},
    \end{cases}$ \\
    & where $r=\min(w_{2n},w_{2n+1})$ and $s=\max(w_{2n},w_{2n+1})$.\\ \hline
     (Case11) & $\phi_{n+1}(w)=(\phi_n(w_{[2n-1]}),w_{2n},w_{2n+1})$. \\\hline
     (Case12) & $\phi_{n+1}(w)=(\phi_n(w_{[2n-1]}),w_{2n},w_{2n+1})$. \\\hline
    \end{tabular}}
    \caption{Construction of $\phi_{n+1}$.}
    \label{tab: construction of phi}
\end{table}

Pick any $w' \in \mathfrak{S}_{2n+1}$ with $\std(w'_{[2n-2,2n+1]}) = 1324$ or $3142$. There exists a word $v$ such that $\phi_n(v) = w'_{[2n-1]}$, as $\phi_n$ is a bijection. Denote $r' = \min(w'_{2n}, w'_{2n+1})$ and $s' = \max(w'_{2n}, w'_{2n+1})$. If $v_{2n-2} < v_{2n-1}$, consider a word $w = (v, r', s')$ which satisfies $\std(w_{[2n-2,2n+1]}) = 1324$. Then we have $w'=\phi_{n+1}(w)$ according to the description of  $\phi_{n+1}$ given in (Case3). If $v_{2n-2} > v_{2n-1}$, a word $w = (v, s', r')$ satisfies $\std(w_{[2n-2,2n+1]}) = 3142$ and maps to $w'$ via $\phi_{n+1}$. Since the cardinalities of the two sets are the same, the claim is proved.

It remains to show that $\phi_{n+1}$ satisfies $(\phi1)$, $(\phi2)$ and $(\phi3)$. Note that the condition $(\phi3)$ is obvious from the construction and the induction hypothesis for $\phi_{n}$ and $\psi_{n}$. We will show that $\phi_{n+1}$ satisfies $(\phi1)$ and $(\phi2)$ for (Case4) and (Case7). The remaining cases follow with a similar argument.

(Case4):
From Table~\ref{tab: ratios of statistic}, we have
\begin{align*}
\stat_{(\mu^{(n+1)},f_{\mu^{(n+1)}})}(w)&=\begin{cases*} a_{n-1} q\stat_{(\mu^{(n)},f_{\mu^{(n)}})}(w_{[2n-1]})\qquad &\text{if $w_{2n-2}<w_{2n-1}$},\\ a_{n-1}\alpha\stat_{(\mu^{(n)},f_{\mu^{(n)}})}(w_{[2n-1]})\qquad &\text{if $w_{2n-2}>w_{2n-1}$},\end{cases*}\\
\stat_{(\lambda^{(n+1)},f_{\lambda^{(n+1)}})}(w)&=\begin{cases*} a_{n-1} q\alpha\stat_{(\lambda^{(n)},f_{\lambda^{(n)}})}(w_{[2n-1]})\qquad &\text{if} $w_{2n-2}<w_{2n-1}$,\\ a_{n-1}\stat_{(\lambda^{(n)},f_{\lambda^{(n)}})}(w_{[2n-1]})\qquad &\text{if} $w_{2n-2}>w_{2n-1}$.\end{cases*}
\end{align*}

In other words, we can write
\begin{align*} \stat_{(\mu^{(n+1)},f_{\mu^{(n+1)}})}(w)&=a_{n-1} \stat^{(1)}_{(\mu^{(n)},f_{\mu^{(n)}})}(w_{[2n-1]}), \text{ and}\\ \stat_{(\lambda^{(n+1)},f_{\lambda^{(n+1)}})}(w)&=a_{n-1} \stat^{(2)}_{(\lambda^{(n)},f_{\lambda^{(n)}})}(w_{[2n-1]}).
\end{align*}
\begin{table}[b]
    \centering
    \footnotesize{\begin{tabular}{|c|c|c|c|c|}
    \hline
    &$\std(w_{[2n-2,2n+1]})$ &   $\frac{\stat^{(1)}_{(\mu^{(n+1)},f_{\mu^{(n+1)}})}(w)}{\stat_{(\mu^{(n)},f_{\mu^{(n)}})}(w_{[2n-1]})}$  &  $\std(w_{[2n-2,2n+1]})$ & $\frac{\stat^{(2)}_{(\lambda^{(n+1)},f_{\lambda^{(n+1)}})}(w)}{\stat_{(\lambda^{(n)},f_{\lambda^{(n)}})}(w_{[2n-1]})}$ \\\hline
    (Case1)&1234 & $q$ & 1243 & $q$ \\
    &2134 & $q$ & 2143 & $q$ \\\hline
    (Case2)&1243 & $q\alpha$ & 1234 & $q\alpha$  \\
    &2143 & $q\alpha$ & 2134 & $q\alpha$ \\\hline
    
    (Case3)&{1324} & {$q^2$} & {1324} & {$q^2\alpha$} \\
    &{3142} & {$q\alpha$} & {3142} & {$q$} \\\hline 
    (Case4)&1342 & $q \alpha a_{n-1}$ & 1342 & {$q \alpha a_{n-1}$} \\
    &{3124} & {$q \alpha a_{n-1}$} & {3124} & {$q \alpha a_{n-1}$} \\\hline
    
    (Case5)&{1423} & {$q^2 a_{n-1}$} & {1432} & {$q^2 \alpha a_{n-1}$} \\
    &{4123} & {$q \alpha a_{n-1}$} & {4132} & {$q a_{n-1}$} \\\hline
    (Case6)&{1432} & {$q^2 \alpha a_{n-1}$}  & {1423} & {$q^2 \alpha^2 a_{n-1}$} \\
    &{4132} & {$q \alpha^2 a_{n-1}$} & {4123} & {$q \alpha a_{n-1}$} \\\hline
    
    (Case7)&2314 & $q^2 \alpha a_{n-1}$ & 2314 & $q^2 \alpha a_{n-1}$ \\
    &3214 & $q^2 \alpha a_{n-1}$ & 3214 & $q^2 \alpha a_{n-1}$ \\\hline
    (Case8)&2341 & $q \alpha a_{n-1}$ & 2341 & $q \alpha  a_{n-1}$ \\
    &3241 & $q \alpha a_{n-1}$ & 3241 & $q \alpha a_{n-1}$ \\\hline
    
    (Case9)&{2413} & {$q^2 \alpha a_{n-1}^2$} & {2413} & {$q^2 \alpha^2 a_{n-1}^2$} \\
    &{4231} & {$q \alpha^2 a_{n-1}^2$} & {4231} & {$q \alpha a_{n-1}^2$} \\\hline
    (Case10)&2431 & $q^2 \alpha a_{n-1}$ & 2431 & $q^2 \alpha a_{n-1}$ \\
    &4213 & $q^2 \alpha a_{n-1}$ & 4213 &  $q^2 \alpha a_{n-1}$ \\\hline
    
    (Case11)&3412 & $q^2 \alpha a_{n-1}^2$  & 3421 & $q^2 \alpha a_{n-1}^2$ \\
    &4312 & $q^2 \alpha a_{n-1}^2$ & 4321 & $q^2 \alpha a_{n-1}^2$ \\\hline
    (Case12)&3421 & $q^2 \alpha^2 a_{n-1}^2$ & 3412 & $q^2 \alpha^2 a_{n-1}^2$ \\
    &4321 & $q^2 \alpha^2 a_{n-1}^2$ & 4312 & $q^2 \alpha^2 a_{n-1}^2$ \\\hline
\end{tabular}}
    \caption{The ratios $\stat^{(i)}/\stat$ for $i=1,2$.}
    \label{tab: ratios of prime statistics}
\end{table}

Thus, we obtain 
\begin{align*}
\stat_{(\mu^{(n+1)},f_{\mu^{(n+1)}})}(w)&=a_{n-1} \stat^{(1)}_{(\mu^{(n)},f_{\mu^{(n)}})}(w_{[2n-1]})=a_{n-1} \stat^{(2)}_{(\lambda^{(n)},f_{\lambda^{(n)}})}(\psi_n(w_{[2n-1]}))\\&=\stat_{(\lambda^{(n+1)},f_{\lambda^{(n+1)}})}(\phi_{n+1}(w)),
\end{align*}
which is the condition $(\phi1)$. Since $\std(w|_{[2n-2,2n+1]})$ is $1342$ or $3124$, we have $|w_{2n-2}-w_{2n-1}|>1$ and $|w_{2n}-w_{2n+1}|>1$. Now the condition $(\phi2)$ follows from Lemma~\ref{lem: ides preserving} (2a) and (2c).

(Case7):
We have
\begin{align*} \stat_{(\mu^{(n+1)},f_{\mu^{(n+1)}})}(w)&=q\alpha a_{n-1} \stat_{(\mu^{(n)},f_{\mu^{(n)}})}(w_{[2n-1]}), \text{ and}\\ \stat_{(\lambda^{(n+1)},f_{\lambda^{(n+1)}})}(w)&=q\alpha a_{n-1} \stat_{(\lambda^{(n)},f_{\lambda^{(n)}})}(w_{[2n-1]}).
\end{align*}
Therefore, we obtain
\begin{align*}
\stat_{(\mu^{(n+1)},f_{\mu^{(n+1)}})}(w)&=q\alpha a_{n-1} \stat_{(\mu^{(n)},f_{\mu^{(n)}})}(w_{[2n-1]})=q\alpha a_{n-1} \stat_{(\lambda^{(n)},f_{\lambda^{(n)}})}(\phi_n(w_{[2n-1]}))\\&=\stat_{(\lambda^{(n+1)},f_{\lambda^{(n+1)}})}(\phi_{n+1}(w))
\end{align*}
that verifies condition $(\phi1)$. The condition $(\phi2)$ follows from Lemma~\ref{lem: ides preserving} (1c).

\textbf{(Construction of $\psi_{n+1}$ from $\phi_{n}$ and $\psi_{n}$)}  Table~\ref{tab: ratios of prime statistics} shows the ratios 
\[
    \dfrac{\stat^{(1)}_{(\mu^{(n+1)},f_{\mu^{(n+1)}})}(w)}{\stat_{(\mu^{(n)},f_{\mu^{(n)}})}(w_{[2n-1]})}\quad \text{ and }\quad
    \dfrac{\stat^{(2)}_{(\lambda^{(n+1)},f_{\lambda^{(n+1)}})}(w)}{\stat_{(\lambda^{(n)},f_{\lambda^{(n)}})}(w_{[2n-1]})}
\]
according to $\std(w_{[2n-2,2n+1]})$.

We will construct $\psi_{n+1}$ for each case in a similar way as we did in the construction for $\phi_{n+1}$. See Table~\ref{tab: Construction psi} for the construction. The proof that $\psi_{n+1}$ satisfies ($\psi1$), ($\psi2$) and ($\psi3$) can be done in a similar way.
\begin{table}[ht]
    \centering
    \footnotesize{\begin{tabular}{|c|c|}
    \hline
    & $\psi_{n+1}$ \\\hline
    (Case1) & $\psi_{n+1}(w)=(\phi_n(w_{[2n-1]}),w_{2n+1},w_{2n})$ \\\hline
    (Case2) & $\psi_{n+1}(w)=(\phi_n(w_{[2n-1]}),w_{2n+1},w_{2n})$ \\\hline
   (Case3) & $
    \psi_{n+1}(w)=\begin{cases}
    (\psi_n(w_{[2n-1]}),r,s)&\text{if $\psi_n(w_{[2n-1]})_{2n-2}<\psi_n(w_{[2n-1]})_{2n-1}$},\\
    (\psi_n(w_{[2n-1]}),s,r)&\text{if $\psi_n(w_{[2n-1]})_{2n-2}>\psi_n(w_{[2n-1]})_{2n-1}$,}
    \end{cases} $ \\
    & where $r=\min(w_{2n},w_{2n+1})$ and $s=\max(w_{2n},w_{2n+1})$.\\ \hline
    (Case4) & $
    \psi_{n+1}(w)=\begin{cases}
    (\phi_n(w_{[2n-1]}),s,r)&\text{if $\phi_n(w_{[2n-1]})_{2n-2}<\phi_n(w_{[2n-1]})_{2n-1}$,}\\
    (\phi_n(w_{[2n-1]}),r,s)&\text{if $\phi_n(w_{[2n-1]})_{2n-2}>\phi_n(w_{[2n-1]})_{2n-1}$,}
    \end{cases} $ \\
    & where $r=\min(w_{2n},w_{2n+1})$ and $s=\max(w_{2n},w_{2n+1})$.\\ \hline
     (Case5) & $\psi_{n+1}(w)=(\psi_n(w_{[2n-1]}),w_{2n+1},w_{2n})$ \\\hline
     (Case6) & $\psi_{n+1}(w)=(\psi_n(w_{[2n-1]}),w_{2n+1},w_{2n})$ \\\hline
     (Case7) & $\psi_{n+1}(w)=(\phi_n(w_{[2n-1]}),w_{2n},w_{2n+1})$ \\\hline
     (Case8) & $\psi_{n+1}(w)=(\phi_n(w_{[2n-1]}),w_{2n},w_{2n+1})$ \\\hline
     (Case9) & $
    \psi_{n+1}(w)=\begin{cases}
    (\psi_n(w_{[2n-1]}),r,s)&\text{if $\psi_n(w_{[2n-1]})_{2n-2}<\phi_n(w_{[2n-1]})_{2n-1}$,}\\
    (\psi_n(w_{[2n-1]}),s,r)&\text{if $\psi_n(w_{[2n-1]})_{2n-2}>\psi_n(w_{[2n-1]})_{2n-1}$,}
    \end{cases} $ \\
    & where $r=\min(w_{2n},w_{2n+1})$ and $s=\max(w_{2n},w_{2n+1})$.\\ \hline
    (Case10) & $
    \psi_{n+1}(w)=\begin{cases}
    (\phi_n(w_{[2n-1]}),s,r)&\text{if $\phi_n(w_{[2n-1]})_{2n-2}<\phi_n(w_{[2n-1]})_{2n-1}$,}\\
    (\phi_n(w_{[2n-1]}),r,s)&\text{if $\phi_n(w_{[2n-1]})_{2n-2}>\phi_n(w_{[2n-1]})_{2n-1}$,}
    \end{cases}$ \\
    & where $r=\min(w_{2n},w_{2n+1})$ and $s=\max(w_{2n},w_{2n+1})$.\\ \hline
     (Case11) & $\psi_{n+1}(w)=(\phi_n(w_{[2n-1]}),w_{2n+1},w_{2n})$ \\\hline
     (Case12) & $\psi_{n+1}(w)=(\phi_n(w_{[2n-1]}),w_{2n+1},w_{2n})$ \\\hline
    \end{tabular}}
    \caption{Construction of $\psi_{n+1}$.}
    \label{tab: Construction psi}
\end{table}

\end{proof}

\subsection{Butler permutations}
In this subsection, we define Butler permutations, which constitute exactly half of all permutations. Butler permutations play a central role in the $F$-expansion of $\I_{\lambda,\mu}[X;q,t]$ (see Theorem~\ref{thm: first main, F-expansion}).

Before we define Butler permutations, we set some terminology for directed perfect matchings. For a directed perfect matching $M=\{(a_1,a_2), \dots, (a_{2n-1},a_{2n})\}$, we say an arc $(a,b)$ is in the \emph{forward direction} if $a<b$ and the \emph{reverse direction} otherwise. An arc $(a,b)$ is \emph{nested} in the other arc $(c,d)$ if $\min\{c,d\}<a,b$ and $\max\{c,d\}>a,b$. We say that a pair of arcs $(a,b)$ and $(c,d) $ is \emph{crossing} if $\min\{a,b\}<\min\{c,d\}<\max\{a,b\}<\max\{c,d\}$ or $\min\{c,d\}<\min\{a,b\}<\max\{c,d\}<\max\{a,b\}$. If not, we say the pair is \emph{noncrossing}.
\begin{definition}
For a permutation $w = w_1 \dots w_n \in \mathfrak{S}_n$ with $n \geq 2$, consider a directed perfect matching $M(w)$ on $\{0, 1, \dots, n+1\}$ if $n$ is even (or on $\{0, 1, \dots, n\}$ if $n$ is odd), with directed arcs $\alpha_1(w), \alpha_2(w), \dots$ as follows:
\begin{enumerate}
\item if $n$ is even,
\begin{itemize}
    \item $\alpha_1(w)$ is a directed arc from $w_{n}$ to $n+1$,
    \item $\alpha_i(w)$ is a directed arc from $w_{n-2(i-1)}$ to $w_{n-2(i-1)+1}$ for $2\leq i\leq \frac{n}{2}$, and
    \item $\alpha_{\frac{n}{2}+1}(w)$ is a directed arc from 0 to $w_1$.
\end{itemize}
\item if $n$ is odd,
\begin{itemize}
    \item $\alpha_i(w)$ is a directed arc from $w_{n-2(i-1)-1}$ to $w_{n-2(i-1)}$ for $1\leq i\leq \frac{n-1}{2}$, and
    \item $\alpha_{\frac{n+1}{2}}(w)$ is a directed arc from 0 to $w_1$.
\end{itemize}
\end{enumerate}
Let $k(w)$ be the smallest integer $k$ such that the pair of arcs $\alpha_k(w)$ and $\alpha_{k+1}(w)$ is noncrossing, if such $k$ exists. We say that $w$ is a \emph{Butler permutation} if either
\begin{itemize}
    \item $\alpha_{k+1}(w)$ is nested in $\alpha_{k}(w)$ and $\alpha_k(w)$ is in the reverse direction, or
    \item $\alpha_{k+1}(w)$ is not nested in $\alpha_{k}(w)$ and $\alpha_k(w)$ is in the forward direction.
\end{itemize}
If there is no such $k$, we let $k(w)$ be $\frac{n}{2}$ when $n$ is even, and $\frac{n-1}{2}$ when $n$ is odd. We say that $w$ is a \emph{Butler permutation} if $\alpha_{k(w)}$ is in the reverse direction. We denote the set of Butler permutations of length $n$ by $\mathfrak{B}_{n}$.
\end{definition}

\begin{example}
We construct $M(w)$ for $w=2413, 3214$, and $1423$. For each $M(w)$, we draw each arc $\alpha_1(w), \alpha_2(w)$, and $\alpha_3(w)$ in red, blue, and green, respectively.
\[  M(2413) = \begin{tikzpicture}[scale=0.7]
      \textcolor{red}{\Matching{3}{5}}\textcolor{blue}{\Matching{4}{1}}\textcolor{green}{\Matching{0}{2}}
      \foreach \i in {0,...,5}{
        \draw [fill] (\i,0) circle [radius=0.075] ;
      }
    \end{tikzpicture},~
    M(3214) = \begin{tikzpicture}[scale=0.7]
      \textcolor{red}{\Matching{4}{5}}\textcolor{blue}{\Matching{2}{1}}\textcolor{green}{\Matching{0}{3}}
      \foreach \i in {0,...,5}{
        \draw [fill] (\i,0) circle [radius=0.075] ;
      }
    \end{tikzpicture},~
    M(1423) = \begin{tikzpicture}[scale=0.7]
      \textcolor{red}{\Matching{3}{5}}\textcolor{blue}{\Matching{4}{2}}\textcolor{green}{\Matching{0}{1}}
      \foreach \i in {0,...,5}{
        \draw [fill] (\i,0) circle [radius=0.075] ;
      }
    \end{tikzpicture}.
\]
For each matching $M(w)$ we find the smallest integer $k$ such that the pair $\alpha_{k}(w)$ and $\alpha_{k+1}(w)$ is noncrossing. For \( M(2413) \), since there is no \( k \) such that \( \alpha_k(2413) \) and \( \alpha_{k+1}(2413) \) are noncrossing, we have \( k(2413) = 2 \). In addition, since \( \alpha_2(2413) \) is in the reverse direction, \( 2413 \) is a Butler permutation.
For \( M(3214) \), we have \( k(3214) = 1 \). Since \( \alpha_2(3214) \) is not nested in \( \alpha_1(3214) \) and \( \alpha_1(3214) \) is in the forward direction, \( 3214 \) is also a Butler permutation.
For \( M(1423) \), we have \( k(1423) = 2 \). Since \( \alpha_3(1423) \) is not nested in \( \alpha_2(1423) \) and \( \alpha_2(1423) \) is in the reverse direction, \( 1423 \) is not a Butler permutation.
\end{example}

The following lemma is direct from the definition.

\begin{lem}\label{lem: Butler recursion}
    For $n\ge 2$, the following statements hold:
     \begin{enumerate}
         \item For a permutation $w\in \mathfrak{S}_{2n+1}$, if the pair of directed arcs $\alpha_1(w)=(w_{2n},$ $w_{2n+1})$ and $\alpha_2(w)=(w_{2n-2},w_{2n-1})$ is crossing, then $w\in\mathfrak{B}_{2n+1}$ if and only if $\std(w_{[2n-1]})\in\mathfrak{B}_{2n-1}$.
         \item  For a permutation $w\in \mathfrak{S}_{2n}$, if the pair of directed arcs $\alpha_1(w)=(w_{2n},2n+1)$ and $\alpha_2(w)=(w_{2n-2},w_{2n-1})$ is crossing, then $w\in\mathfrak{B}_{2n}$ if and only if $\std(w_{[2n-1]})\in\mathfrak{B}_{2n-1}$.
     \end{enumerate}
\end{lem}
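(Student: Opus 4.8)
The plan is to observe that the matchings $M(w)$ and $M(\sigma)$, where $\sigma := \std(w_{[2n-1]})$, differ by exactly one arc: deleting $\alpha_1(w)$ from $M(w)$ and shifting the indices down by one yields a matching order-isomorphic to $M(\sigma)$, so that the entire Butler condition transports along this correspondence once the first pair of arcs is controlled — which is exactly what the crossing hypothesis does.

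First I would record the explicit arcs. In part (1), for $w\in\mathfrak{S}_{2n+1}$ the definition gives $\alpha_1(w)=(w_{2n},w_{2n+1})$, $\alpha_{i+1}(w)=(w_{2n-2i},w_{2n-2i+1})$ for $1\le i\le n-1$, and $\alpha_{n+1}(w)=(0,w_1)$; for $\sigma\in\mathfrak{S}_{2n-1}$ one has $\alpha_i(\sigma)=(\sigma_{2n-2i},\sigma_{2n-2i+1})$ for $1\le i\le n-1$ and $\alpha_n(\sigma)=(0,\sigma_1)$. The values $w_{2n},w_{2n+1}$ are endpoints of $\alpha_1(w)$ and of no other arc, so $\alpha_2(w),\dots,\alpha_{n+1}(w)$ have endpoint set $\{0,w_1,\dots,w_{2n-1}\}$; since $\std$ preserves the relative order of $w_1,\dots,w_{2n-1}$ (and $0$ remains minimal), the assignment $\alpha_{i+1}(w)\mapsto\alpha_i(\sigma)$ for $1\le i\le n$ is a bijection onto the arcs of $M(\sigma)$ that preserves, for each arc, its direction (forward vs.\ reverse) and, for each pair of arcs, whether they cross and whether one is nested in the other. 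Part (2) is identical with $\alpha_1(w)=(w_{2n},2n+1)$: the sink $2n+1$ and the value $w_{2n}$ occur only in $\alpha_1(w)$, so the same correspondence $\alpha_{i+1}(w)\mapsto\alpha_i(\sigma)$ applies.

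Finally I would push the statistic $k(\cdot)$ through this correspondence. The hypothesis that $\alpha_1(w),\alpha_2(w)$ is crossing forces the minimal index $k(w)$, if it exists, to be $\ge 2$. If there is a least $m\ge2$ with $\alpha_m(w),\alpha_{m+1}(w)$ noncrossing, then by the correspondence $m-1$ is the least index with $\alpha_{m-1}(\sigma),\alpha_m(\sigma)$ noncrossing, so $k(w)=m$ and $k(\sigma)=m-1$, and the pair $\bigl(\alpha_{k(w)}(w),\alpha_{k(w)+1}(w)\bigr)$ corresponds to $\bigl(\alpha_{k(\sigma)}(\sigma),\alpha_{k(\sigma)+1}(\sigma)\bigr)$ with the same direction of the first arc and the same nesting relation; since being a Butler permutation is exactly a condition on the direction of that first arc together with whether the second is nested in it, $w\in\mathfrak{B}_{2n+1}$ iff $\sigma\in\mathfrak{B}_{2n-1}$. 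Otherwise every consecutive pair of arcs of $M(w)$ is crossing, hence so is every consecutive pair of $M(\sigma)$; then $k(w)$ and $k(\sigma)$ take their maximal values $n$ and $n-1$, the arc $\alpha_n(w)$ corresponds to $\alpha_{n-1}(\sigma)$, and $w$ is Butler iff $\alpha_n(w)$ is reverse iff $\alpha_{n-1}(\sigma)$ is reverse iff $\sigma$ is Butler. The even case proves part (2) verbatim. I do not expect a genuine obstacle here: the only points needing care are checking that $\alpha_1(w)$ shares no endpoint with the other arcs (immediate from the formulas) and keeping the index shift $i\leftrightarrow i+1$ consistent throughout, which is why the paper calls the lemma direct from the definition.
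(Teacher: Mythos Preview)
Your argument is correct and is exactly the unwinding of the definition that the paper intends: the paper gives no proof of this lemma, stating only that it is ``direct from the definition.'' Your index-shift correspondence $\alpha_{i+1}(w)\leftrightarrow\alpha_i(\sigma)$ together with the observation that $\alpha_1(w)$ shares no endpoint with the remaining arcs is precisely the content behind that remark, and your case split on whether a least noncrossing pair exists handles the default value of $k(\cdot)$ correctly in both parities.
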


It turns out that the number of elements in \( \mathfrak{B}_n \) equals \( \frac{n!}{2} \) which follows from Proposition~\ref{Prop: h21111}. Before proving this, we establish a lemma.

\begin{lem}\label{lem: pieri rule for F}
     For any positive integers $k,m$, consider a subset $A \subseteq [k+m]$ such that $|A|=m$. Then for a permutation $v\in \mathfrak{S}_m$, we have
    \begin{equation*}
        \sum_{\substack{w\in \mathfrak{S}_{k+m}\\\std(w\vert_{A})=v}}F_{\iDes(w)}=(h_1)^{k}F_{\iDes(v)}.
    \end{equation*}
\end{lem}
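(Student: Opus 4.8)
The plan is to prove Lemma~\ref{lem: pieri rule for F} by reducing it to a known product formula for fundamental quasisymmetric functions via the standardization map. The left-hand side groups permutations $w \in \mathfrak{S}_{k+m}$ according to which $m$-subset of positions $A$ carries (the standardization of) a fixed pattern $v$. Fixing $A$ and summing over all $w$ with $\std(w|_A) = v$ should produce a single product of the shape $h_1^k F_{\iDes(v)}$, since the remaining $k$ entries of $w$ (those in positions $[k+m]\setminus A$) are free to be an arbitrary permutation pattern of size $k$, and the interleaving of a fixed word with an arbitrary one is governed by shuffle-compatibility.

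\textbf{Step 1: Reduce to the case $A = \{1, 2, \dots, m\}$ or observe independence from $A$.} First I would note that the sum on the left does not actually depend on the choice of $A$, only on $|A| = m$. Indeed, given $A = \{c_1 < \dots < c_m\}$, a permutation $w$ with $\std(w|_A) = v$ is determined by choosing the set of values $\{w_{c_1}, \dots, w_{c_m}\}$ (an $m$-subset $B$ of $[k+m]$), placing them in positions $A$ in the order dictated by $v$, and placing the complementary values $[k+m] \setminus B$ in the complementary positions $[k+m]\setminus A$ in an arbitrary order. So the left-hand side is $\sum_B \sum_{u} F_{\iDes(w(B,u))}$ where $u$ ranges over the $k!$ orderings of the complement. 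It therefore suffices to treat one convenient $A$; I would take $A = \{1, \dots, m\}$, so that $w = v' u'$ is a concatenation, where $v'$ is the unique increasing-rearrangement realization of the pattern $v$ on a value set $B$ and $u'$ is an arbitrary arrangement of $[k+m]\setminus B$.

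\textbf{Step 2: Apply the product formula for $F$.} Now $\sum_{w: \std(w|_{[m]}) = v} F_{\iDes(w)}$ is exactly a sum over all ways to build a permutation of $\mathfrak{S}_{k+m}$ whose first $m$ entries standardize to $v$ and whose last $k$ entries are unconstrained. By the quasisymmetric product formula \cite[Proposition 5.2.15]{GR20} together with shuffle-compatibility of the descent set \cite{Sta72}, the generating function $\sum_{w \in \mathrm{Sh}(v, u)} F_{\iDes(w)}$ over the shuffle set of two permutations $v \in \mathfrak{S}_m$, $u \in \mathfrak{S}_k$ equals $F_{\iDes(v)} \cdot F_{\iDes(u)}$. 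Summing over all $u \in \mathfrak{S}_k$ gives $F_{\iDes(v)} \sum_{u \in \mathfrak{S}_k} F_{\iDes(u)} = F_{\iDes(v)} \cdot h_1^k$, since $\sum_{u \in \mathfrak{S}_k} F_{\iDes(u)} = \sum_{u} F_{\iDes(u)} = h_1^k$ (the multiset of descent sets of $\mathfrak{S}_k$ realizes each subset of $[k-1]$ with the right multiplicity so that the sum collapses to $(h_1)^k$; equivalently this is the $k$-fold product of $h_1 = F_\emptyset$ in degree $1$ expanded back). Care is needed because the natural product identity is stated for $\iDes$ (inverse descents) rather than $\des$; I would either invoke the version of shuffle-compatibility phrased for $\iDes$, or dualize via $w \mapsto w^{-1}$, noting that the passage from $w|_A$-patterns to $w^{-1}$-restrictions is exactly the dictionary translating "fixed pattern in a set of positions" to "fixed pattern in a set of values," which is the form in which Gessel's multiplication rule is cleanest.

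\textbf{Main obstacle.} The only genuinely delicate point is bookkeeping the identification between "$\std(w|_A) = v$" and a shuffle of $v$ with a free permutation $u$, in a way compatible with $\iDes$ rather than $\des$: one must check that as $w$ ranges over all extensions realizing the pattern $v$ on positions $A$, the inverse $w^{-1}$ ranges precisely over the shuffles of the standardized words of $v$ and of the complement, and that $\iDes(w)$ matches the descent set that appears in Gessel's product formula. Once this correspondence is set up cleanly, the rest is the cited product formula plus the elementary identity $\sum_{u \in \mathfrak{S}_k} F_{\des(u)} = h_1^k$. I expect no surprises beyond this indexing translation, so I would keep the written proof short, citing \cite{Sta72} and \cite[Proposition 5.2.15]{GR20} for the substantive input and spelling out only the position-versus-value dictionary.
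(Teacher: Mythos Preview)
Your proposal is correct and follows essentially the same approach as the paper, which does not give a written proof but simply states that the lemma ``follows from shuffle-compatibility of the descent set \cite{Sta72} and product formula for fundamental quasisymmetric functions \cite[Proposition 5.2.15]{GR20}.'' Your expansion of this into a proof sketch via the dictionary $w\mapsto w^{-1}$ (turning ``pattern $v$ in positions $A$'' into ``pattern $v^{-1}$ in values $A$'', hence into shuffles) is exactly the right way to make those citations into an argument; the only remark is that your Step~1 reduction to $A=[m]$ is unnecessary and slightly misleading, since concatenation does not directly produce shuffles---the inversion trick you flag as the ``main obstacle'' already handles arbitrary $A$ in one stroke.
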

\begin{proof}
When $A=[m]$ the proof follows from the product formula for fundamental quasisymmetric functions, see \cite[Proposition 5.2.15]{GR20} for example. For any two $A,A'\subseteq [k+m]$ with $|A|=|A'|=m$, we have 
\begin{equation*}
        \sum_{\substack{w\in \mathfrak{S}_{k+m}\\\std(w\vert_{A})=v}}F_{\iDes(w)}= \sum_{\substack{w\in \mathfrak{S}_{k+m}\\\std(w\vert_{A'})=v}}F_{\iDes(w)}.
\end{equation*}
    by \cite[Exercise 3.161]{StanleyEC1-2ed}.
    \end{proof}

\begin{proposition}\label{Prop: h21111}
Let
\begin{equation*}
    \I_n[X]:=\sum_{w\in \mathfrak{B}_{n}} F_{\iDes(w)}[X].
\end{equation*}
Then we have $\I_n[X]=h_{(2,1^{n-2})}[X]$.
\end{proposition}
\begin{proof}
We proceed by induction on $n$. We treat $n=2,3,4$ and $5$ as the base cases. If $n=2$, we have $\mathfrak{B}_2=\{12\}$, which gives $F_{\iDes(12)}=F_{\emptyset}=h_2$. If $n=3$, we have $\mathfrak{B}_3=\{123,231,312\}$, which gives
\begin{equation*}
\sum_{w\in \mathfrak{B}_{3}} F_{\iDes(w)}[X]=F_{\iDes(123)}+F_{\iDes(231)}+F_{\iDes(312)}=F_{\emptyset}+F_{\{1\}}+F_{\{2\}}=h_{(2,1)}.
\end{equation*}
For $n=4$, we list elements of $\mathfrak{B}_4$ sorted by $\iDes(w)$: $\iDes(w)=\emptyset$: 1234, $\iDes(w)=\{1\}$: 2134,2314, $\iDes(w)=\{2\}$: 1324, 3124, 3412, $\iDes(w)=\{3\}$: 1243, 4123, $\iDes(w)=\{1,2\}: 3214$, $\iDes(w)=\{1,3\}: 2413,4213$, $\iDes(w)=\{2,3\}$: 4132. We conclude 
\begin{equation*}
    \sum_{w\in \mathfrak{B}_{4}} F_{\iDes(w)}[X]=F_{\emptyset}+2F_{\{1\}}+3F_{\{2\}}+2F_{\{3\}}+F_{\{1,2\}}+2F_{\{1,3\}}+F_{\{2,3\}}=h_{(2,1,1)}.
\end{equation*}
For $n=5$, we list elements of $\mathfrak{B}_5$ sorted by $\iDes(w)$: $\iDes(w)=\emptyset$: 12345, $\iDes(w)=\{1\}$: 21345,23145,23451 $\iDes(w)=\{2\}$: 13245, 13452, 31245, 34125, 34152, 34512, $\iDes(w)=\{3\}$: 12435, 12453, 14523, 41235, 41523, 45123, $\iDes(w)=\{4\}$: 12534, 15234, 51234, $\iDes(w)=\{1,2\}$: 32145, 32451, 34251, $\iDes(w)=\{1,3\}$: 24135, 24153, 24351, 24513, 42135, 42351, 45213, 45231, $\iDes(w)=\{1,4\}$: 21534, 23514, 23541, 25134, 52134, 52341, $\iDes(w)=\{2,3\}$: 14352, 41325, 41352, 43512, 45312 $\iDes(w)=\{2,4\}$: 13524, 13542, 31524, 35124, 35412, 51324, 51342, 53412, $\iDes(w)=\{3,4\}$: 15423, 51423, 54123, $\iDes(w)=\{1,2,3\}$: 43251, $\iDes(w)=\{1,2,4\}$: 35214, 35241, 53241, $\iDes(w)=\{1,3,4\}$: 25413, 52413, 52431, $\iDes(w)=\{2,3,4\}$: 54312. We conclude 
\begin{align*}
    \sum_{w\in \mathfrak{B}_{5}} F_{\iDes(w)}[X]=F_{\emptyset}+3F_{\{1\}}+6F_{\{2\}}+6F_{\{3\}}+3F_{\{4\}}+3F_{\{1,2\}}+8F_{\{1,3\}}+6 F_{\{1, 4\}}\\ +5F_{\{2,3\}}+8F_{\{2,4\}}+3F_{\{3,4\}}+F_{\{1,2,3\}}+3F_{\{1,2,4\}}+3F_{\{1,3,4\}}+F_{\{2,3,4\}}=h_{(2,1,1,1)}.
\end{align*}

Now let $n\geq 6$. We define
\begin{align*}
    \mathfrak{S}'_n =& \{w\in \mathfrak{S}_n: \text{the pair }\alpha_1(w) \text{ and } \alpha_2(w) \text{ is noncrossing}\}\\
     \mathfrak{S}''_n = &\{w\in \mathfrak{S}_n: \text{the pair }\alpha_1(w) \text{ and } \alpha_2(w) \text{ is crossing}\}
\end{align*}
and let $\mathfrak{B}'_n=\mathfrak{B}_n \cap \mathfrak{S}'_n$ and $\mathfrak{B}''_n=\mathfrak{B}_n \cap \mathfrak{S}''_n$.

As $n-2\geq 4$, $\alpha_1(\std(w_{[3,n]}))$ and $\alpha_2(\std(w_{[3,n]}))$ exist and relative positions and directions of these arcs are identical to those of  $\alpha_1(w)$ and $\alpha_2(w)$. It is direct to see that $w\in\mathfrak{B}'_{n}$ if and only if $\std(w_{[3,n]})\in\mathfrak{B}'_{n-2}$. Thus we have
\begin{equation}\label{eq: recursion for I^(1)}
    h_{(1^2)} \sum_{w\in \mathfrak{B}'_{n-2}} F_{\iDes(w)}[X] = \sum_{w\in \mathfrak{B}'_{n}} F_{\iDes(w)}[X]
\end{equation}
by Lemma~\ref{lem: pieri rule for F}. Similarly, we get
\begin{equation}\label{eq: recursion for S''}
    h_{(1^2)} \sum_{w\in \mathfrak{S}''_{n-2}} F_{\iDes(w)}[X] = \sum_{w\in \mathfrak{S}''_{n}} F_{\iDes(w)}[X].
\end{equation}

On the other hand, there is a map from $\mathfrak{B}''_{n}$ to $\mathfrak{S}''_n\setminus\mathfrak{B}''_n$ sending $w$ to $w'$ obtained from $w$ by reversing the arc $\alpha_{k(w)}(w)$. Clearly, this map is a bijection. Moreover $\alpha_{k(w)-1}(w)$ must form a crossing with $\alpha_{k(w)}(w)$ so $\alpha_{k(w)}(w)$ does not consist of adjacent integers, which implies $\iDes(w)=\iDes(w')$. In particular, we deduce
\[
    2 \sum_{w\in \mathfrak{B}''_{n}} F_{\iDes(w)}[X]  = \sum_{w \in \mathfrak{S}''_n} F_{\iDes(w)}[X].
\]
Together with \eqref{eq: recursion for S''}, this proves a recursive relation, namely
\begin{equation}\label{eq: recursion for I^(2)}
    h_{(1^2)} \sum_{w\in \mathfrak{B}''_{n-2}} F_{\iDes(w)}[X]  = \sum_{w\in \mathfrak{B}''_{n}} F_{\iDes(w)}[X].
\end{equation}

By combining \eqref{eq: recursion for I^(1)} and \eqref{eq: recursion for I^(2)}, we have
\[
    h_{(1^2)} \I_{n-2} [X]= \I_{n}[X],
\]
which completes the proof.
\end{proof}

For the remainder of this subsection, our goal is to prove Proposition~\ref{lem: main LLT lemma}. To state Proposition~\ref{lem: main LLT lemma} we need the following definition. 

\begin{definition}
    For positive integers $n>m$,  We define $\bar{\mathcal{V}}(n,m;\alpha)$ to be the set of all filled diagrams $(\bar{\mu},f_{\bar{\mu}})$ such that   $\bar{\mu}=[[n],[m]\setminus\{1\}]$ and a filling $f_{\bar{\mu}}$ on it satisfying the following conditions:
    \begin{equation}\label{eq: column exchange half condition}
   q^{-1}f_{\bar{\mu}}(m+1,1)=\alpha, \qquad f_{\bar{\mu}}(i,1) = \alpha f_{\bar{\mu}}(i,2), \text{ for  $2<i\leq m$}.  
    \end{equation}
     For $(\bar{\mu},f_{\bar{\mu}})\in \bar{\mathcal{V}}(n,m;\alpha)$, we let $\bar{S}(\bar{\mu},f_{\bar{\mu}})$ be the filled diagram $(\bar{\lambda},f_{\bar{\lambda}})$ such that $\bar{\lambda}=[[m],[n]\setminus\{1\}]$ and:
    \begin{align*}
        &f_{\bar{\lambda}}(i,2)=f_{\bar{\mu}}(i,1), \text{ for  $i>m+1$}\\
         &f_{\bar{\lambda}}(m+1,2)=\alpha\\
         &f_{\bar{\lambda}}(i,1) =f_{\bar{\mu}}(i,2) \text{\hspace{2mm}and\hspace{2mm}}f_{\bar{\lambda}}(i,2)=f_{\bar{\mu}}(i,1) , \text{ for  $2<i\leq m$}.\\
         &\alpha f_{\bar{\lambda}}(2,1)=f_{\bar{\mu}}(2,1).
    \end{align*}
See Figure~\ref{fig: generic filled diagrams half} for a generic $(\bar{\mu},f_{\bar{\mu}})\in\bar{\mathcal{V}}(n,m;\alpha)$ and corresponding $\bar{S}(\bar{\mu},f_{\bar{\mu}})$.

Let $(D,f)$ be a filled diagram such that the restriction to the $j$ and $j+1$-th columns is in $\bar{\mathcal{V}}(n,m;\alpha)$ for some $n, m$ and $\alpha$. We define $\bar{S}_j(D,f)$ to be the filled diagram obtained from $(D,f)$ by applying the map $\bar{S}$ to the $j$ and $j+1$-th columns.
\begin{figure}
\[
    \ytableausetup{boxsize=3.3em}
    (\bar{\mu},f_{\bar{\mu}}) = \begin{ytableau}
     b_{1} \\
     \vdots  \\
     b_{n-m-1} \\
    q\alpha  \\
    \alpha a_1 & a_1 \\
    \vdots & \vdots \\
     \alpha a_{m-2} & a_{m-2}\\
    \alpha  a_{m-1} &  \\
     & \none
    \end{ytableau}
    \qquad \qquad \bar{S}(\bar{\mu},f_{\bar{\mu}})  = \begin{ytableau}
     \none &b_{1} \\
     \none &\vdots  \\
     \none &b_{n-m-1} \\
    \none & \alpha  \\
    a_1 & \alpha a_1 \\
    \vdots & \vdots \\
    a_{m-2} &\alpha a_{m-2}\\
     a_{m-1} &  \\
     & \none
    \end{ytableau}
\]
\caption{Any $(\bar{\mu},f_{\bar{\mu}})\in\bar{\mathcal{V}}(n,m;\alpha)$ is of the form 
 in the left figure for suitable $a_i$'s, $b_i$'s in $\mathbb{F}$. The right figure shows the corresponding $\bar{S}(\bar{\mu},f_{\bar{\mu}})$.}\label{fig: generic filled diagrams half}
\end{figure}
\end{definition}

\begin{proposition}\label{lem: main LLT lemma} For positive integers $n>m$, let $(\bar{\mu},f_{\bar{\mu}})\in \bar{\mathcal{V}}(n,m;\alpha)$ and $(\bar{\lambda},f_{\bar{\lambda}})=\bar{S}(\bar{\mu},f_{\bar{\mu}})$. Then there is a bijection 
\[
    \zeta_{n,m}:\mathfrak{S}_{n+m-1}\rightarrow\mathfrak{S}_{n+m-1}
\]
satisfying the following three conditions, where we denote $w'=\zeta_{n,m}(w)$:
\begin{align*}
    (\zeta1)\quad & \begin{cases}
     \stat_{(\bar{\mu},f_{\bar{\mu}})}(w) = \stat_{(\bar{\lambda},f_{\bar{\lambda}})}(w') \\ \qquad \qquad \text{ for } w\in\mathfrak{S}_{m+n-1} \text{ such that } \std(w|_{[n-m,n+m-1]})\in \mathfrak{B}_{2m}, \text{ and }\\
    \stat_{(\bar{\mu},f_{\bar{\mu}})}(w) = \alpha \stat_{(\bar{\lambda},f_{\bar{\lambda}})}(w')\\ \qquad \qquad \text{ for } w\in\mathfrak{S}_{m+n-1} \text{ such that } \std(w|_{[n-m,n+m-1]})\notin \mathfrak{B}_{2m}
    \end{cases}\\
    (\zeta2)\quad& \iDes(w) = \iDes(w'), \text{ and}\\
    (\zeta3)\quad& 
    \begin{cases}
    \{w_{n-m+2i-1}, w_{n-m+2i}\} = \{w'_{n-m+2i-1}, w'_{n-m+2i}\} \text{ for } 1\le i \le m-1,
     \text{ and } & \\ w_i=w'_i \quad \text{ for } 1\leq i\leq n-m \text{ or } i=n+m-1.&
    \end{cases} 
\end{align*}
In particular, we have
\begin{equation}\label{eq: ff}     \dfrac{\widetilde{H}_{(\bar{\mu},f_{\bar{\mu}})}[X;q,t] - \alpha \widetilde{H}_{(\bar{\lambda},f_{\bar{\lambda}})}[X;q,t]}{1-\alpha} = \sum_{\substack{w\in{\mathfrak{S}_{n+m-1}} \\ \std(w|_{[n-m,n+m-1]}) \in \mathfrak{B}_{2m}}} \stat_{(\bar{\mu},f_{\bar{\mu}})}(w) F_{\iDes(w)}.
\end{equation}
\end{proposition}

We briefly explain how one can deduce \eqref{eq: ff} from the map $\zeta_{n,m}$. 
Let $A$ be the set of $w\in\mathfrak{S}_{m+n-1}$ such that 
$\std(w|_{[n-m,n+m-1]})\in \mathfrak{B}_{2m}$, and let $B$ be the set of 
$w\in\mathfrak{S}_{m+n-1}$ such that 
$\std(w|_{[n-m,n+m-1]})\notin \mathfrak{B}_{2m}$. 
Then, by properties $(\zeta1)$ and $(\zeta2)$, we have
\[
    \sum_{w\in A} \stat_{(\bar{\mu},f_{\bar{\mu}})}(w) F_{\iDes(w)}
    = \sum_{w\in A'} \stat_{(\bar{\lambda},f_{\bar{\lambda}})}(w) F_{\iDes(w)},
\qquad
    \sum_{w\in B} \stat_{(\bar{\mu},f_{\bar{\mu}})}(w) F_{\iDes(w)}
    = \alpha \sum_{w\in B'} \stat_{(\bar{\lambda},f_{\bar{\lambda}})}(w) F_{\iDes(w)},
\]
where $A'$ and $B'$ are the images of $A$ and $B$ under the map $\zeta_{n,m}$. 
Now computing the left-hand side of \eqref{eq: ff} leaves only 
$\sum_{w\in A} \stat_{(\bar{\mu},f_{\bar{\mu}})}(w) F_{\iDes(w)}$.

Let $(D,f)$ be a filled diagram such that the restriction $(E,g)$ to the $j$ and $j+1$-th columns is in $\bar{\mathcal{V}}(n,m;\alpha)$. Then exploiting the map $\zeta=\zeta_{n,m}\uparrow^{D}_E$, the same argument as in the proof of Corollary \ref{cor: column exchange} shows the following: Denoting $w'=\zeta(w),$ we have
\begin{align*}
    (\zeta1)\quad & \begin{cases}
     \stat_{(D,f)}(w) = \stat_{(\bar{S}_j(D,f))}(w') \\ \qquad \qquad \text{ for } w\in\mathfrak{S}_{|D|} \text{ such that } \std\left((w\downarrow_{E}^{D}\bigg|_{[n-m,n+m-1]})\right)\in \mathfrak{B}_{2m}, \text{ and }\\
    \stat_{(D,f)}(w) = \alpha\stat_{(\bar{S}_j(D,f))}(w')\\ \qquad \qquad \text{ for } w\in\mathfrak{S}_{|D|} \text{ such that } \std\left((w\downarrow_{E}^{D}\bigg|_{[n-m,n+m-1]})\right)\notin \mathfrak{B}_{2m}
    \end{cases}\\
    (\zeta2)\quad& \iDes(w) = \iDes(w'), \text{ and}\\
    (\zeta3)\quad& 
    \{w_{a}: a \in C_i\} = \{w'_{a}: a \in C_i\}  \text{  for all  $i$, where $C_i=\{N_{D}(u): \text{$u$ is in the $i$-th row of $D$}\}$}. &
\end{align*}

We conclude
\begin{equation}\label{eq: concatenation2}
    \dfrac{\widetilde{H}_{(D,f)}[X;q,t] - \alpha \widetilde{H}_{\bar{S}_j(D,f)}[X;q,t]}{1-\alpha} = \sum_{w} \stat_{(D,f)}(w)F_{\iDes(w)},
\end{equation}
where the sum is over all $w\in\mathfrak{S}_{|D|}$ such that $\std\left((w\downarrow_{E}^{D}\bigg|_{[n-m,n+m-1]})\right)\in \mathfrak{B}_{2m}$.

To prove Proposition~\ref{lem: main LLT lemma}, we first prove the following lemma as we need two auxiliary maps $\eta_n^{(1)}$ and $\eta_n^{(2)}$ for the construction of $\zeta_{n,m}$.

\begin{lem}\label{lem: auxiliary maps}
Recall the filled diagrams $(\mu^{(n)},f_{\mu^{(n)}})$ and $(\lambda^{(n)},f_{\lambda^{(n)}})$ in Figure~\ref{fig: filled diagrams column exchange lemma}. Then there are bijections
\begin{align*}
    \eta^{(1)}_n: \{w\in\mathfrak{S}_{2n-1}: w_{2n-2}<w_{2n-1}\} \rightarrow \{w\in\mathfrak{S}_{2n-1}: w_{2n-2}<w_{2n-1}\} \\
    \eta^{(2)}_n: \{w\in\mathfrak{S}_{2n-1}: w_{2n-2}>w_{2n-1}\} \rightarrow \{w\in\mathfrak{S}_{2n-1}: w_{2n-2}>w_{2n-1}\} 
\end{align*}
satisfying the following three conditions:
\begin{align*}
    (\eta1) \quad& \begin{cases*}
    \stat_{(\mu^{(n)},f_{\mu^{(n)}})}(w)=\stat_{(\lambda^{(n)},f_{\lambda^{(n)}})}(\eta^{(1)}_n(w)), \text{ if $w\in \mathfrak{B}_{2n-1}$ and $w_{2n-2}<w_{2n-1}$}\\
    \stat_{(\mu^{(n)},f_{\mu^{(n)}})}(w)=\alpha \stat_{(\lambda^{(n)},f_{\lambda^{(n)}})}(\eta^{(1)}_n(w)), \text{ if $w\notin \mathfrak{B}_{2n-1}$ and $w_{2n-2}<w_{2n-1}$}
    \end{cases*}\\
    & \begin{cases*}
    \alpha\stat_{(\mu^{(n)},f_{\mu^{(n)}})}(w)=\stat_{(\lambda^{(n)},f_{\lambda^{(n)}})}(\eta^{(2)}_n(w)), \text{ if $w\in \mathfrak{B}_{2n-1}$ and $w_{2n-2}>w_{2n-1}$}\\
    \stat_{(\mu^{(n)},f_{\mu^{(n)}})}(w)=\stat_{(\lambda^{(n)},f_{\lambda^{(n)}})}(\eta^{(2)}_n(w)),  \text{ if $w\notin \mathfrak{B}_{2n-1}$ and $w_{2n-2}>w_{2n-1}$}
    \end{cases*}\\
    (\eta2)\quad&\iDes(w)=\iDes(\eta^{(1)}_n(w)) \text{ and } \iDes(w)=\iDes(\eta^{(2)}_n(w))\\
    (\eta3)\quad&\text{denoting }w'=\eta^{(j)}(w), \text{ we have }
    \\
    &\begin{cases}
    \{w_{2i}, w_{2i+1}\} = \{w'_{2i}, w'_{2i+1}\} \quad \text{ for } 1\le i \le n-1,\text{ and}\\
    w_1=w'
    _1 \quad 
    \end{cases}\\&\text{ when  } j=1,2.
\end{align*}
\end{lem}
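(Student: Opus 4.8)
The plan is to prove Lemma~\ref{lem: auxiliary maps} by induction on $n$, reusing the bijections $\phi_n$ and $\psi_n$ already constructed in the proof of Proposition~\ref{lem: column exchange}: I treat $\phi_n,\psi_n$ as available for all $n$ and build $\eta^{(1)}_{n+1},\eta^{(2)}_{n+1}$ out of $\phi_n,\psi_n,\eta^{(1)}_n,\eta^{(2)}_n$. For the base case $n=2$ one checks directly from the $6\times 6$ table in the proof of Proposition~\ref{lem: column exchange} that $\eta^{(1)}_2=\eta^{(2)}_2=\mathrm{id}$ works: the codomain constraint together with $(\eta3)$ already forces both maps to be the identity, and reading off the columns $\stat_{(\mu^{(2)},f_{\mu^{(2)}})}$ and $\stat_{(\lambda^{(2)},f_{\lambda^{(2)}})}$ shows that their ratio is $1$ exactly on $\mathfrak{B}_3$ and $\alpha^{\pm1}$ off $\mathfrak{B}_3$, which is precisely $(\eta1)$; $\iDes$-invariance is trivial.

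For the inductive step I would follow the ``construction of $\phi_{n+1}$ from $\phi_n$ and $\psi_n$'' part of the proof of Proposition~\ref{lem: column exchange} almost verbatim. As there, passing from $(\mu^{(n)},\lambda^{(n)})$ to $(\mu^{(n+1)},\lambda^{(n+1)})$ inserts one cell into each column, and the ratios $\stat_{(\mu^{(n+1)},f_{\mu^{(n+1)}})}(w)/\stat_{(\mu^{(n)},f_{\mu^{(n)}})}(w|_{[2n-1]})$ and its analogue for $\lambda$ depend only on $\std(w|_{[2n-2,2n+1]})$, i.e.\ on the twelve cases of Table~\ref{tab: ratios of statistic}. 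I would define $\eta^{(1)}_{n+1}(w)$ and $\eta^{(2)}_{n+1}(w)$ case by case, always in the form $w\mapsto\bigl(\theta_n(w|_{[2n-1]}),\,*,\,*\bigr)$ with $\theta_n\in\{\phi_n,\psi_n,\eta^{(1)}_n,\eta^{(2)}_n\}$ and the last two letters being $w_{2n},w_{2n+1}$ in one of their two orders; the choice of $\theta_n$ and of the order is forced by the numerical data of Table~\ref{tab: ratios of statistic} together with the requirement that the image land in the prescribed half ($w'_{2n}<w'_{2n+1}$ for $\eta^{(1)}_{n+1}$, $w'_{2n}>w'_{2n+1}$ for $\eta^{(2)}_{n+1}$). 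The organizing dichotomy is whether the arcs $\alpha_1(w)=(w_{2n},w_{2n+1})$ and $\alpha_2(w)=(w_{2n-2},w_{2n-1})$ cross: in the crossing cases Lemma~\ref{lem: Butler recursion}(1) gives $w\in\mathfrak{B}_{2n+1}\iff\std(w|_{[2n-1]})\in\mathfrak{B}_{2n-1}$, so one routes $w|_{[2n-1]}$ through the inductive $\eta^{(1)}_n$ or $\eta^{(2)}_n$, which already carry the correct $\alpha$-bookkeeping relative to $\mathfrak{B}_{2n-1}$; in the noncrossing cases $k(w)=1$, so the Butler status of $w$ is decided locally by the direction of $\alpha_1(w)$ and whether $\alpha_2(w)$ is nested by it, and one instead uses the $\alpha$-neutral $\phi_n$ or $\psi_n$ on $w|_{[2n-1]}$ and lets the explicit ratio of Table~\ref{tab: ratios of statistic} supply the needed factor $1$ or $\alpha$.

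The verifications then split exactly as in Proposition~\ref{lem: column exchange}. Condition $(\eta3)$ is immediate from the shape of the construction together with $(\phi3)$, $(\psi3)$, and the inductive $(\eta3)$. Condition $(\eta2)$ follows from Lemma~\ref{lem: ides preserving}: the $\iDes$- or $\overline{\iDes}$-invariance of $\phi_n,\psi_n,\eta^{(1)}_n,\eta^{(2)}_n$ feeds the hypothesis of that lemma, and the gap conditions $|w_{2n-2}-w_{2n-1}|>1$ and/or $|w_{2n}-w_{2n+1}|>1$ are checked to hold precisely in the cases where the parts (2) or (1c) of Lemma~\ref{lem: ides preserving} are invoked (they hold automatically in the crossing cases and in the large-gap noncrossing cases). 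Condition $(\eta1)$ is a bookkeeping check: multiply the ratio from Table~\ref{tab: ratios of statistic} by the inductive identity ($(\eta1)$, $(\phi1)$, or $(\psi1)$) for $\theta_n$, and use Lemma~\ref{lem: Butler recursion} to translate ``Butler-ness at level $2n+1$'' into ``Butler-ness at level $2n-1$'' in the crossing cases, or inspect $\alpha_1(w)$ directly in the noncrossing cases. Bijectivity of $\eta^{(1)}_{n+1},\eta^{(2)}_{n+1}$ then holds because within each case the map is the composition of a bijective inductive map with a fixed rearrangement of the last two coordinates, and the twelve cases partition both the source and the target half of $\mathfrak{S}_{2n+1}$ according to $\std(w|_{[2n-2,2n+1]})$.

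I expect the \emph{main obstacle} to be the noncrossing cases --- the analogues of Cases~4--6 and 10--12 --- where the Butler status of $w$ genuinely depends on both the direction of $\alpha_1(w)$ and whether $\alpha_2(w)$ is nested by it. There one must choose the order of the last two letters (and whether to route $w|_{[2n-1]}$ through $\phi_n$ or $\psi_n$) so that simultaneously (i) the image lies in the half prescribed for $\eta^{(1)}_{n+1}$ resp.\ $\eta^{(2)}_{n+1}$, (ii) the Butler status of the image is compatible with the $\alpha$-weight dictated by Table~\ref{tab: ratios of statistic}, and (iii) $(\eta2)$ survives. Checking that these three requirements are mutually compatible in each such case --- and that the two halves of the domain are permuted in a consistent pattern so that the final maps are genuine bijections --- is the delicate part; once the case tables are laid out, the remaining verifications are routine and parallel to those in the proof of Proposition~\ref{lem: column exchange}.
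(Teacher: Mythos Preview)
Your proposal is correct and follows essentially the same inductive scheme as the paper, with the same base case ($\eta^{(1)}_2=\eta^{(2)}_2=\mathrm{id}$) and the same recursive ingredients $\phi_n,\psi_n,\eta^{(1)}_n,\eta^{(2)}_n$. Two small simplifications relative to what you wrote: since the codomain constraint together with $(\eta3)$ already forces $(w'_{2n},w'_{2n+1})=(w_{2n},w_{2n+1})$, the last two letters are \emph{never} reordered---the paper's construction is always $w\mapsto(\theta_n(w|_{[2n-1]}),w_{2n},w_{2n+1})$, and accordingly one uses a restricted $8$-case table (only the twelve patterns with the prescribed sign of $w_{2n+1}-w_{2n}$) rather than the full Table~\ref{tab: ratios of statistic}; and the anticipated obstacle is misplaced, as the noncrossing cases are in fact the easy ones (Butler status is read off $\alpha_1(w)$ directly and $\phi_n$ or $\psi_n$ handles them), while the crossing cases route through $\eta^{(1)}_n$ or $\eta^{(2)}_n$ according to the sign of $w_{2n-1}-w_{2n-2}$ and are equally routine once the tables are written down.
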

\begin{proof}
    We proceed by induction on $n$. For the base case $n=2$, the maps $\eta_2^{(1)}$ and $\eta_2^{(2)}$ are given as the identity maps. 
The tables below show that they satisfy condition $(\eta1)$. 
Note that conditions $(\eta2)$ and $(\eta3)$ are trivial to check. 

Assume that we have constructed the desired $\eta_n^{(1)}$ and $\eta_n^{(2)}$. 
Our goal is to construct $\eta_{n+1}^{(1)}$ and $\eta_{n+1}^{(2)}$, building upon $\eta_n^{(1)}$ and $\eta_n^{(2)}$ together with the maps $\phi_n$ and $\psi_n$ in the proof of Proposition~\ref{lem: column exchange}.

\begin{table}[h]
\centering
\begin{tabular}{|c|c|c|c|}
\hline
$w$ & $\stat_{(\mu^{(2)},f_{\mu^{(2)}})}(w)$ & $\stat_{(\lambda^{(2)},f_{\lambda^{(2)}})}(\eta^{(1)}_2(w))$ & $w\in \mathfrak{B}_3$\\
\hline
123 & 1 & 1 & yes \\ \hline
213 & $\alpha q$ & $q$ & no \\ \hline
312 & $\alpha q$ & $\alpha q$ & yes \\ \hline
\end{tabular}
\end{table}

\begin{table}[h]
\centering
\begin{tabular}{|c|c|c|c|}
\hline
$w$ & $\stat_{(\mu^{(2)},f_{\mu^{(2)}})}(w)$ & $\stat_{(\lambda^{(2)},f_{\lambda^{(2)}})}(\eta^{(2)}_2(w))$ & $w\in \mathfrak{B}_3$\\
\hline
132 & $q$ & $q$ & no \\ \hline
231 & $q$ & $\alpha q$ & yes \\ \hline
321 & $\alpha q^2$ & $\alpha q^2$ & no \\ \hline
\end{tabular}
\end{table}

    \textbf{(Construction of $\eta^{(1)}_{n+1}$ from $\eta^{(1)}_{n}$ ,$\eta^{(2)}_{n}$, $\phi_n$ and $\psi_n$)}
   Table~\ref{tab: ratios of statistic eta1} shows the ratios
\[
\dfrac{\stat_{(\mu^{(n+1)},f_{\mu^{(n+1)}})}(w)}{\stat_{(\mu^{(n)},f_{\mu^{(n)}})}(w_{[2n-1]})} \qquad \text{and} \qquad \dfrac{\stat_{(\lambda^{(n+1)},f_{\lambda^{(n+1)}})}(w)}{\stat_{(\lambda^{(n)},f_{\lambda^{(n)}})}(w_{[2n-1]})}
\]
according to $\std(w_{[2n-2,2n+1]})$, under the assumption that $w_{2n} < w_{2n+1}$. Table~\ref{tab: eta1 construction} describes the construction of $\eta^{(1)}_{n+1}$ for each case categorized in Table~\ref{tab: ratios of statistic eta1}. 

Condition $(\eta2)$ follows from Lemma~\ref{lem: ides preserving} (1a) and (2a). Condition $(\eta3)$ holds directly from the induction hypothesis and the construction. It remains to prove condition $(\eta1)$, which we will do for (Case1), (Case2), (Case3), and (Case4). The remaining cases can be treated in a similar manner.

    \begin{table}[h]
    \centering
    \footnotesize{\begin{tabular}{|c|c|c|c|c|c|}
    \hline
    &$\std(w_{[2n-2,2n+1]})$ &   $\frac{\stat_{(\mu^{(n+1)},f_{\mu^{(n+1)}})}(w)}{\stat_{(\mu^{(n)},f_{\mu^{(n)}})}(w_{[2n-1]})}$  &  $\std(w_{[2n-2,2n+1]})$ & $\frac{\stat_{(\lambda^{(n+1)},f_{\lambda^{(n+1)}})}(w)}{\stat_{(\lambda^{(n)},f_{\lambda^{(n)}})}(w_{[2n-1]})}$ 
    \\\hline
    (Case1) &1234 & 1 & 1234 & 1 \\
    &2134 & 1 & 2134 & 1 \\\hline
    (Case2) &1324 & $q$ & 1324 & $q$  \\\hline
    (Case3) &3124 & $\alpha a_{n-1}$ & 3124 & $a_{n-1}$  \\\hline
    (Case4) &1423 & $q a_{n-1}$ & 1423 & $q\alpha a_{n-1}$ \\
    &4123 & $\alpha a_{n-1}$ & 4123 & $a_{n-1}$ \\\hline
    (Case5) &2314 & $q \alpha a_{n-1}$ & 2314 & $q  a_{n-1}$   \\
    &3214 & $q \alpha a_{n-1}$ & 3214 & $q a_{n-1}$ \\\hline
    (Case6) &2413 & $q \alpha a^{2}_{n-1}$ & 2413 & $q \alpha a^{2}_{n-1}$  \\\hline
    (Case7) &4213 & $q \alpha a_{n-1}$ & 4213 & $q a_{n-1}$  \\\hline
    (Case8) &3412 & $q \alpha a^{2}_{n-1}$ & 3412 & $q \alpha a^{2}_{n-1}$ \\
    &4312 & $q \alpha a^{2}_{n-1}$ & 4312 & $q \alpha a^{2}_{n-1}$ \\\hline
\end{tabular}}
    \caption{Ratios of $\stat$ when $w_{2n}<w_{2n+1}$}
    \label{tab: ratios of statistic eta1}
\end{table}

\begin{table}[h]
    \centering
    \begin{tabular}{|c|c|}
    \hline
    & $\eta^{(1)}_{n+1}$ \\\hline
    (Case1) & $\eta^{(1)}_{n+1}(w)=(\phi_n(w_{[2n-1]}),w_{2n},w_{2n+1})$ \\\hline
    (Case2) & $\eta^{(1)}_{n+1}(w)=(\eta^{(1)}_n(w_{[2n-1]}),w_{2n},w_{2n+1})$ \\\hline
    (Case3) & $\eta^{(1)}_{n+1}(w)=(\eta^{(2)}_n(w_{[2n-1]}),w_{2n},w_{2n+1})$ \\\hline
    (Case4) & $\eta^{(1)}_{n+1}(w)=(\psi_n(w_{[2n-1]}),w_{2n},w_{2n+1})$ \\\hline
    (Case5) & $\eta^{(1)}_{n+1}(w)=(\phi_n(w_{[2n-1]}),w_{2n},w_{2n+1})$ \\\hline
    (Case6) & $\eta^{(1)}_{n+1}(w)=(\eta^{(1)}_n(w_{[2n-1]}),w_{2n},w_{2n+1})$ \\\hline
    (Case7) & $\eta^{(1)}_{n+1}(w)=(\eta^{(2)}_n(w_{[2n-1]}),w_{2n},w_{2n+1})$ \\\hline
    (Case8) & $\eta^{(1)}_{n+1}(w)=(\phi_n(w_{[2n-1]}),w_{2n},w_{2n+1})$ \\\hline
\end{tabular}
    \caption{Construction of $\eta^{(1)}$.}
    \label{tab: eta1 construction}
\end{table}

    (Case1): We have
    \begin{align*}
    \stat_{(\mu^{(n+1)},f_{\mu^{(n+1)}})}(w)&=\stat_{(\mu^{(n)},f_{\mu^{(n)}})}(w_{[2n-1]})=\stat_{(\lambda^{(n)},f_{\lambda^{(n)}})}(\phi_n(w_{[2n-1]}))\\&=\stat_{(\lambda^{(n+1)},f_{\lambda^{(n+1)}})}(\eta^{(1)}_{n+1}(w)),
\end{align*}
and $\std(w_{[2n-2,2n+1]})=1234 \text{ or } 2134$ implies $w\in \mathfrak{B}_{2n+1}$.

    (Case2): We have \begin{equation*}\frac{ \stat_{(\mu^{(n+1)},f_{\mu^{(n+1)}})}(w)}{\stat_{(\lambda^{(n+1)},f_{\lambda^{(n+1)}})}(\eta^{(1)}_{n+1}(w))}=\frac{q \stat_{(\mu^{(n)},f_{\mu^{(n)}})}(w_{[2n-1]})}{q\stat_{(\lambda^{(n)},f_{\lambda^{(n)}})}(\eta^{(1)}_{n}(w_{[2n-1]}))}=\frac{ \stat_{(\mu^{(n)},f_{\mu^{(n)}})}(w_{[2n-1]})}{\stat_{(\lambda^{(n)},f_{\lambda^{(n)}})}(\eta^{(1)}_{n}(w_{[2n-1]}))}\end{equation*}
    and Lemma~\ref{lem: Butler recursion} (1) implies $w\in \mathfrak{B}_{2n+1}$ if and only if $\std(w_{[2n-1]})\in \mathfrak{B}_{2n-1}$. Therefore the condition $(\eta1)$ follows from the induction hypothesis $(\eta1)$ for $\eta^{(1)}_n$.

    (Case3): We have $\frac{ \stat_{(\mu^{(n+1)},f_{\mu^{(n+1)}})}(w)}{\stat_{(\lambda^{(n+1)},f_{\lambda^{(n+1)}})}(\eta^{(1)}_{n+1}(w))}=\frac{ \alpha\stat_{(\mu^{(n)},f_{\mu^{(n)}})}(w_{[2n-1]})}{\stat_{(\lambda^{(n)},f_{\lambda^{(n)}})}(\eta^{(2)}_{n}(w_{[2n-1]}))}$,
    and Lemma~\ref{lem: Butler recursion} (1) implies $w\in \mathfrak{B}_{2n+1}$ if and only if $\std(w_{[2n-1]})\in \mathfrak{B}_{2n-1}$. Therefore the condition $(\eta1)$ follows from the induction hypothesis $(\eta1)$ for $\eta^{(2)}_n$.

     \begin{table}[h]
    \centering
    \footnotesize{\begin{tabular}{|c|c|c|c|c|c|}
    \hline
    &$\std(w_{[2n-2,2n+1])}$ &   $\frac{\stat_{(\mu^{(n+1)},f_{\mu^{(n+1)}})}(w)}{\stat_{(\mu^{(n)},f_{\mu^{(n)}})}(w_{[2n-1]})}$  &  $\std(w_{[2n-2,2n+1]})$ & $\frac{\stat_{(\lambda^{(n+1)},f_{\lambda^{(n+1)}})}(w)}{\stat_{(\lambda^{(n)},f_{\lambda^{(n)}})}(w_{[2n-1]})}$ \\\hline
    (Case1) &1243 & $q$ & 1243 & $q$ \\
    &2143 & $q$ & 2143 & $q$ \\\hline
    (Case2) &1342 & $q a_{n-1}$ & 1342 & $q\alpha a_{n-1}$  \\\hline
    (Case3) &3142 & $q$ & 3142 & $q$  \\\hline
    (Case4) &1432 & $q^2 a_{n-1}$ & 1432 & $q^2 \alpha a_{n-1}$ \\
    &4132 & $q \alpha a_{n-1}$ & 4132 & $q a_{n-1}$ \\\hline
    (Case5) &2341 & $q  a_{n-1}$ & 2341 & $q \alpha a_{n-1}$   \\
    &3241 & $q  a_{n-1}$ & 3241 & $q \alpha a_{n-1}$ \\\hline
    (Case6) &2431 & $q^2  a_{n-1}$ & 2431 & $q^2 \alpha a_{n-1}$  \\\hline
    (Case7) &4231 & $q \alpha a^{2}_{n-1}$ & 4231 & $q \alpha a^{2}_{n-1}$  \\\hline
    (Case8) &3421 & $q^2 \alpha a^{2}_{n-1}$ & 3421 & $q^2 \alpha a^{2}_{n-1}$ \\
    &4321 & $q^2 \alpha a^{2}_{n-1}$ & 4321 & $q^2 \alpha a^{2}_{n-1}$ \\\hline
\end{tabular}}
    \caption{Ratios of $\stat$ when $w_{2n}>w_{2n+1}$.}
    \label{tab: ratios of statistic eta2}
\end{table}

     (Case4): Recall the definition of $\stat^{(1)}$ and $\stat^{(2)}$ in \eqref{eq: stat1 stat2 definition}. 
     We have
    \begin{align*}
        \frac{ \stat_{(\mu^{(n+1)},f_{\mu^{(n+1)}})}(w)}{\stat_{(\lambda^{(n+1)},f_{\lambda^{(n+1)}})}(\eta^{(1)}_{n+1}(w))}=\frac{a_{n-1} \stat^{(1)}_{(\mu^{(n)},f_{\mu^{(n)}})}(w_{[2n-1]})}{a_{n-1}\stat^{(2)}_{(\lambda^{(n)},f_{\lambda^{(n)}})}(\psi_{n}(w_{[2n-1]}))}=1,
    \end{align*}
   and $\std(w_{[2n-2,2n+1]})=1423 \text{ or } 4123$ implies $w\in \mathfrak{B}_{2n+1}$.

     \textbf{(Construction of $\eta^{(2)}_{n+1}$ from $\eta^{(1)}_{n}$ ,$\eta^{(2)}_{n}$, $\phi_n$ and $\psi_n$)}
Table~\ref{tab: ratios of statistic eta2} shows \\$\dfrac{\stat_{(\mu^{(n+1)},f_{\mu^{(n+1)}})}(w)}{\stat_{(\mu^{(n)},f_{\mu^{(n)}})}(w_{[2n-1]})}$ and $\dfrac{\stat_{(\lambda^{(n+1)},f_{\lambda^{(n+1)}})}(w)}{\stat_{(\lambda^{(n)},f_{\lambda^{(n)}})}(w_{[2n-1]})}$
    according to $\std(w_{[2n-2,2n+1]})$ such that $w_{2n}>w_{2n+1}$. Table~\ref{tab: eta2 construction} shows the construction of $\eta^{(2)}_{n+1}$ for each case specified in Table~\ref{tab: ratios of statistic eta2}. One can similarly prove that $\eta^{(2)}_{n+1}$ satisfies $(\eta1)$, $(\eta2)$ and $(\eta3)$.

\begin{table}[h]
    \centering
    \begin{tabular}{|c|c|}
    \hline
    & $\eta^{(2)}_{n+1}$ \\\hline
    (Case1) & $\eta^{(2)}_{n+1}(w)=(\phi_n(w_{[2n-1]}),w_{2n},w_{2n+1})$ \\\hline
    (Case2) & $\eta^{(2)}_{n+1}(w)=(\eta^{(1)}_n(w_{[2n-1]}),w_{2n},w_{2n+1})$ \\\hline
    (Case3) & $\eta^{(2)}_{n+1}(w)=(\eta^{(2)}_n(w_{[2n-1]}),w_{2n},w_{2n+1})$ \\\hline
    (Case4) & $\eta^{(2)}_{n+1}(w)=(\psi_n(w_{[2n-1]}),w_{2n},w_{2n+1})$ \\\hline
    (Case5) & $\eta^{(2)}_{n+1}(w)=(\phi_n(w_{[2n-1]}),w_{2n},w_{2n+1})$ \\\hline
    (Case6) & $\eta^{(2)}_{n+1}(w)=(\eta^{(1)}_n(w_{[2n-1]}),w_{2n},w_{2n+1})$ \\\hline
    (Case7) & $\eta^{(2)}_{n+1}(w)=(\eta^{(2)}_n(w_{[2n-1]}),w_{2n},w_{2n+1})$ \\\hline
    (Case8) & $\eta^{(2)}_{n+1}(w)=(\phi_n(w_{[2n-1]}),w_{2n},w_{2n+1})$ \\\hline
\end{tabular}
    \caption{Construction of $\eta^{(2)}_{n+1}$.}
    \label{tab: eta2 construction}
\end{table}    
     
\end{proof}

\begin{proof}[Proof of Proposition~\ref{lem: main LLT lemma}]
It suffices to show the claim when $m$ and $n$ differ by one. Let $(\bar{\mu}^{(n)},f_{\bar{\mu}^{(n)}})$ and $(\bar{\lambda}^{(n)}, f_{\bar{\lambda}^{(n)}})$ be the filled diagrams as depicted below, where $a_i\in\mathbb{F}$ for $i=1,2,\dots,n-1$.
\[
    \ytableausetup{boxsize=3em}
    (\bar{\mu}^{(n)},f_{\bar{\mu}^{(n)}}) = \begin{ytableau}
    q\alpha  \\
    \alpha a_1 & a_1 \\
    \alpha a_2 & a_2 \\
    \vdots & \vdots \\
    \alpha a_{n-2} & a_{n-2}\\
    \alpha  a_{n-1} &  \\
     & \none
    \end{ytableau}
    \qquad \qquad (\bar{\lambda}^{(n)}, f_{\bar{\lambda}^{(n)}}) = \begin{ytableau}
    \none & \alpha  \\
    a_1 & \alpha a_1 \\
    a_2 & \alpha a_2 \\
    \vdots & \vdots \\
    a_{n-2}& \alpha a_{n-2}\\
     a_{n-1} & \\
     & \none
    \end{ytableau}
\]

It is enough to construct a bijection $\zeta_n:=\zeta_{n+1,n}:\mathfrak{S}_{2n}\rightarrow \mathfrak{S}_{2n}$ satisfying the following three conditions, where we denote  $w'=\zeta_{n}(w)$:
\begin{align*}
    (\zeta1)\quad & \begin{cases}
     \stat_{(\bar{\mu}^{(n)},f_{\bar{\mu}^{(n)}})}(w) = \stat_{(\bar{\lambda}^{(n)},f_{\bar{\lambda}^{(n)}})}(w'), \text{ for } w\in\mathfrak{S}_{2n} \text{ such that } w\in \mathfrak{B}_{2n}\\
    \stat_{(\bar{\mu}^{(n)},f_{\bar{\mu}}^{(n)})}(w) = \alpha \stat_{(\bar{\lambda}^{(n)},f_{\bar{\lambda}}^{(n)})}(w') \text{ for } w\in\mathfrak{S}_{2n} \text{ such that } w\notin \mathfrak{B}_{2n},
    \end{cases}\\
    (\zeta2)\quad& \iDes(w) = \iDes(w'), \text{ and}\\
    (\zeta3)\quad& 
    \begin{cases}
    \{w_{2i}, w_{2i+1}\} = \{w'_{2i}, w'_{2i+1}\} \text{ for } 1\le i \le n-1 \text{ and }\\ w_1=w'_1 \text{ and }w_{2n}=w'_{2n}.
    \end{cases}
\end{align*}

For the case $n=1$, we take $\zeta_1 : \mathfrak{S}_2 \rightarrow \mathfrak{S}_2$ to be the identity map, and it is straightforward to check that it satisfies the above three conditions. Now assume $n>1$ and recall the filled diagrams $(\mu^{(n)},f_{\mu^{(n)}})$ and $(\lambda^{(n)},f_{\lambda^{(n)}})$
in Figure~\ref{fig: filled diagrams column exchange lemma}. We have
\begin{align*}
    \dfrac{\stat_{(\bar{\mu}^{(n)},f_{\bar{\mu}^{(n)}})}(w)}{\stat_{(\mu^{(n)},f_{\mu^{(n)}})}(w_{[2n-1]})}&=q^{\chi(w_{2n-1}>w_{2n})}(\alpha a_{n-1})^{\chi(w_{2n-2}>w_{2n})}\\
    \dfrac{\stat_{(\bar{\lambda}^{(n)},f_{\bar{\lambda}^{(n)}})}(w)}{\stat_{(\lambda^{(n)},f_{\lambda^{(n)}})}(w_{[2n-1]})}&=q^{\chi(w_{2n-1}>w_{2n})}(a_{n-1})^{\chi(w_{2n-2}>w_{2n})}.
\end{align*}
Table~\ref{tab: ratios of stat zeta} shows the ratios $  \dfrac{\stat_{(\bar{\mu}^{(n)},f_{\bar{\mu}^{(n)}})}(w)}{\stat_{(\mu^{(n)},f_{\mu^{(n)}})}(w_{[2n-1]})}$ and $ \dfrac{\stat_{(\bar{\lambda}^{(n)},f_{\bar{\lambda}^{(n)}})}(w)}{\stat_{(\lambda^{(n)},f_{\lambda^{(n)}})}(w_{[2n-1]})}$ according to $\std(w_{[2n-2,2n]})$.
\begin{table}[h]
    \centering
    \footnotesize{\begin{tabular}{|c|c|c|c|c|c|}
    \hline
    &$\std(w_{[2n-2,2n]})$ &   $\frac{\stat_{(\bar{\mu}^{(n)},f_{\bar{\mu}^{(n)}})}(w)}{\stat_{(\mu^{(n)},f_{\mu^{(n)}})}(w_{[2n-1]})}$  &  $\std(w_{[2n-2,2n]})$ & $\frac{\stat_{(\bar{\lambda}^{(n)},f_{\bar{\lambda}^{(n)}})}(w)}{\stat_{(\lambda^{(n)},f_{\lambda^{(n)}})}(w_{[2n-1]})}$ \\\hline
    (Case1) &123 & $1$ & 123 & $1$ \\
    &213 & $1$ & 213 & $1$ \\\hline
    (Case2) &132 & $q $ & 132 & $q$  \\\hline
    (Case3) &312 & $\alpha a_{n-1}$ & 312 & $a_{n-1}$  \\\hline
    (Case4) &231 & $q \alpha a_{n-1}$ & 231 & $q a_{n-1}$  \\
    &321 & $q \alpha a_{n-1}$  & 321 & $q a_{n-1}$  \\\hline
\end{tabular}}
    \caption{Ratios of $\stat$.}
    \label{tab: ratios of stat zeta}
\end{table}
Recall the maps $\phi_{n}$ in Proposition~\ref{lem: column exchange} and $\eta_n^{(1)}$ and $\eta_n^{(2)}$ in Lemma~\ref{lem: auxiliary maps}. Table~\ref{tab: construction zeta} shows the construction of $\zeta_n$ for each case specified in Table~\ref{tab: ratios of stat zeta}.
\begin{table}[h]
    \centering
    \begin{tabular}{|c|c|}
    \hline
    & $\zeta_{n}$ \\\hline
    (Case1) & $\zeta_{n}(w)=(\phi_n(w_{[2n-1]}),w_{2n})$ \\\hline
    (Case2) & $\zeta_{n}(w)=(\eta^{(1)}_n(w_{[2n-1]}),w_{2n})$ \\\hline
    (Case3) & $\zeta_{n}(w)=(\eta^{(2)}_n(w_{[2n-1]}),w_{2n})$ \\\hline
    (Case4) & $\zeta_{n}(w)=(\phi_n(w_{[2n-1]}),w_{2n})$ \\\hline
\end{tabular}
    \caption{Construction of $\zeta_n$.}
    \label{tab: construction zeta}
\end{table}
The condition $(\zeta3)$ follows from $(\phi3)$ for $\phi_n$ and $(\eta3)$ for $\eta^{(1)}_n$ and $\eta^{(2)}_n$. The condition $(\zeta2)$ follows easily from the fact that the maps $\phi_n$, $\eta^{(1)}_n$ and $\eta^{(2)}_n$ preserve $\iDes$. Now we show that $\zeta_n$ satisfies $(\zeta1)$.

  (Case1): We have
    \begin{align*}
    \stat_{(\bar{\mu}^{(n)},f_{\bar{\mu}^{(n)}})}(w)&=\stat_{(\mu^{(n)},f_{\mu^{(n)}})}(w_{[2n-1]})=\stat_{(\lambda^{(n)},f_{\lambda^{(n)}})}(\phi_n(w_{[2n-1]}))\\&=\stat_{(\bar{\lambda}^{(n)},f_{\bar{\lambda}^{(n)}})}(\zeta_{n}(w)),
    \end{align*}
and $\std(w_{[2n-2,2n]})=123 \text{ or } 213$ implies $w\in \mathfrak{B}_{2n}$.

    (Case2): We have
    \begin{align*}
        \frac{ \stat_{(\bar{\mu}^{(n)},f_{\bar{\mu}^{(n)}})}(w)}{\stat_{(\bar{\lambda}^{(n)},f_{\bar{\lambda}^{(n)}})}(\zeta_{n}(w))}=\frac{q \stat_{(\mu^{(n)},f_{\mu^{(n)}})}(w_{[2n-1]})}{q\stat_{(\lambda^{(n)},f_{\lambda^{(n)}})}(\eta^{(1)}_{n}(w_{[2n-1]}))}=\frac{ \stat_{(\mu^{(n)},f_{\mu^{(n)}})}(w_{[2n-1]})}{\stat_{(\lambda^{(n)},f_{\lambda^{(n)}})}(\eta^{(1)}_{n}(w_{[2n-1]}))},
    \end{align*}
    and Lemma~\ref{lem: Butler recursion} (2) implies $w\in \mathfrak{B}_{2n}$ if and only if $\std(w_{[2n-1]})\in \mathfrak{B}_{2n-1}$. Therefore the condition $(\zeta1)$ follows from  $(\eta1)$ for $\eta^{(1)}_n$.

    (Case3): We have
    \begin{align*}
        \frac{ \stat_{(\bar{\mu}^{(n)},f_{\bar{\mu}^{(n)}})}(w)}{\stat_{(\bar{\lambda}^{(n)},f_{\bar{\lambda}^{(n)}})}(\zeta_{n}(w))}=\frac{ \alpha\stat_{(\mu^{(n)},f_{\mu^{(n)}})}(w_{[2n-1]})}{\stat_{(\lambda^{(n)},f_{\lambda^{(n)}})}(\eta^{(2)}_{n}(w_{[2n-1]}))},
    \end{align*}
    and Lemma~\ref{lem: Butler recursion} (2) implies $w\in \mathfrak{B}_{2n}$ if and only if $\std(w_{[2n-1]})\in \mathfrak{B}_{2n-1}$. Therefore the condition $(\zeta1)$ follows from $(\eta1)$ for $\eta^{(2)}_n$.
     
     (Case4):  We have
    \begin{align*}
        \frac{ \stat_{(\bar{\mu}^{(n)},f_{\bar{\mu}^{(n)}})}(w)}{\stat_{(\bar{\lambda}^{(n)},f_{\bar{\lambda}^{(n)}})}(\zeta_{n}(w))}=\frac{\alpha \stat_{(\mu^{(n)},f_{\mu^{(n)}})}(w_{[2n-1]})}{\stat_{(\lambda^{(n)},f_{\lambda^{(n)}})}(\phi_{n}(w_{[2n-1]}))}=\alpha,
    \end{align*}
and $\std(w_{[2n-2,2n]})=231 \text{ or } 321$ implies $w\notin \mathfrak{B}_{2n}$.

\end{proof}

\section{Combinatorial formula for $\operatorname{I}_{\lambda,\mu}[X;q,t]$}\label{Sec: proof of main theorem}

\subsection{Proof of Theorem~\ref{thm: first main, F-expansion}}
Let $\nu$ be a partition and $\mu, \lambda \subseteq \nu$ be two distinct partitions such that $|\nu/\lambda| = |\nu/\mu| = 1$. Without loss of generality, we may assume that $\lambda \trianglerighteq \mu$, i.e., $\mu$ is the partition obtained from $\lambda$ by moving a single cell to an upper row. Then there exist indices $i < j$ such that $\lambda$ and $\mu$ are of the form
\begin{align*}
    \mu &= [[a_1],[a_2],\dots,[a_{j-1}], [a_j-1], [a_{j+1}],\dots,[a_\ell]], \\
    \lambda &= [[a_1],[a_2],\dots,[a_{i-1}], [a_i-1], [a_{i+1}],\dots,[a_\ell]].
\end{align*}
We first deform $(\mu, f^{\st}_{\mu})$ and $(\lambda, f^{\st}_{\lambda})$ by applying the operators $S_k$ and $\cycling$ to obtain $\mathfrak{D}_{\lambda,\mu}(\mu, f^{\st}_{\mu})$ and $\mathfrak{D}_{\lambda,\mu}(\lambda, f^{\st}_{\lambda})$ (Definition~\ref{def: deformation}). Then we show that $\mathfrak{D}_{\lambda,\mu}(\mu, f^{\st}_{\mu})$ and $\mathfrak{D}_{\lambda,\mu}(\lambda, f^{\st}_{\lambda})$ are related by the operator $\bar{S}_{\ell-1}$ (Lemma~\ref{lem: deformed shape is nice}). Finally, we prove Theorem~\ref{thm: first main, F-expansion}.

\begin{definition}\label{def: deformation}
   Let $\lambda$ and $\mu$ be partitions of the form
\begin{align*}
    \mu &= [[a_1],[a_2],\dots,[a_{j-1}],[a_j-1],[a_{j+1}],\dots,[a_\ell]], \\
    \lambda &= [[a_1],[a_2],\dots,[a_{i-1}],[a_i-1],[a_{i+1}],\dots,[a_\ell]],
\end{align*}
for some $i < j$. We define $\mathfrak{D}_{\lambda,\mu}(\mu,f^{\st}_{\mu})$ to be the filled diagram obtained by performing the following steps to $(\mu,f^{\st}_{\mu})$:
\begin{itemize}
    \item (Step 1) Apply the sequence of operators $(S_{\ell-1} \dots S_{i+1})(S_1 \dots S_{j-1})$ to $(\mu,f^{\st}_{\mu})$. Here, operators are applied right-to-left. In other words, move the $j$-th column of $(\mu,f^{\st}_{\mu})$ to the far left, and move the $(i+1)$-th column (which was originally the $i$-th column of $(\mu,f^{\st}_{\mu})$) to the far right.
    \item (Step 2) Apply the operator $\cycling$.
\end{itemize}

Similarly, we define $\mathfrak{D}_{\lambda,\mu}(\lambda,f^{\st}_{\lambda})$ to be the filled diagram obtained by performing the following steps to $(\lambda,f^{\st}_{\lambda})$:
\begin{itemize}
    \item (Step 1) Apply the sequence of operators $(S_1 \dots S_{i-1})(S_{\ell-1} \dots S_j)$ to $(\lambda,f^{\st}_{\lambda})$. In other words, move the $j$-th column of $(\lambda,f^{\st}_{\lambda})$ to the far right, and move the $i$-th column of $(\lambda,f^{\st}_{\lambda})$ to the far left.
    \item (Step 2) Apply the operator $\cycling$.
\end{itemize}

See Figure~\ref{fig: visualization} for a pictorial description and also see Example \ref{ex: 5.4}.

\end{definition}

To apply an operator $S_j$ to a filled diagram $(D,f)$, we require that the restriction to $j$ and $j+1$-th columns belongs to $\mathcal{V}(n,m)$ for some $n$ and $m$. The following lemma shows the validity of (Step 1) in the above definition.

\begin{figure} 
\begin{tikzpicture}[scale=0.6]
\begin{scope}[shift={(-10,0)}]
\filldraw[black] (0.35,6.5) circle (0.000001pt) node[anchor=south] {$(\mu,f^{\st}_{\mu})$};

\draw[blue] (0,0)--(0.7,0);
\draw[blue] (0,0)--(0,6);
\draw[blue] (0,6)--(0.7,6);
\draw[blue] (0.7,0)--(0.7,6);

\filldraw[black] (1.2,3) circle (0.000001pt) node[anchor=south] {\textcolor{blue}{$\dots$}};

\begin{scope}[shift={(1.7,0)}]
\draw[blue] (0,0)--(0.7,0);
\draw[blue] (0,0)--(0,5.5);
\draw[blue] (0,5.5)--(0.7,5.5);
\draw[blue] (0.7,0)--(0.7,5.5);
\end{scope}

\begin{scope}[shift={(2.6,0)}]
\draw[black] (0,0)--(0.7,0);
\draw[black] (0,0)--(0,5.1);
\draw[black] (0,5.1)--(0.7,5.1);
\draw[black] (0.7,0)--(0.7,5.1);
\draw[black] (0,4.4)--(0.7,4.4);
\filldraw[black] (0.35,-1) circle (0.000001pt) node[anchor=south] {\textcolor{black}{$i$}};
\draw[->] (0.35, -1) .. controls (4.5,-1.7) .. (6.7,-0.3);
\end{scope}

\begin{scope}[shift={(3.5,0)}]
\draw[red] (0,0)--(0.7,0);
\draw[red] (0,0)--(0,4);
\draw[red] (0,4)--(0.7,4);
\draw[red] (0.7,0)--(0.7,4);
\end{scope}

\filldraw[black] (4.6,2) circle (0.000001pt) node[anchor=south] {\textcolor{red}{$\dots$}};

\begin{scope}[shift={(4.9,0)}]
\draw[red] (0,0)--(0.7,0);
\draw[red] (0,0)--(0,3.7);
\draw[red] (0,3.7)--(0.7,3.7);
\draw[red] (0.7,0)--(0.7,3.7);
\end{scope}

\begin{scope}[shift={(5.8,0)}]
\draw[black] (0,0)--(0.7,0);
\draw[black] (0,0)--(0,2.8);
\draw[black] (0,2.8)--(0.7,2.8);
\draw[black] (0.7,0)--(0.7,2.8);
\filldraw[black] (0.35,-1) circle (0.000001pt) node[anchor=south] {\textcolor{black}{$j$}};
\draw[->] (0.35, -1) .. controls (-3.5,-1.7) .. (-6.2,-0.3);

\end{scope}

\begin{scope}[shift={(6.7,0)}]
\draw[green] (0,0)--(0.7,0);
\draw[green] (0,0)--(0,2.6);
\draw[green] (0,2.6)--(0.7,2.6);
\draw[green] (0.7,0)--(0.7,2.6);
\end{scope}

\filldraw[black] (7.9,1.7) circle (0.000001pt) node[anchor=south] {\textcolor{green}{$\dots$}};

\begin{scope}[shift={(8.2,0)}]
\draw[green] (0,0)--(0.7,0);
\draw[green] (0,0)--(0,2.3);
\draw[green] (0,2.3)--(0.7,2.3);
\draw[green] (0.7,0)--(0.7,2.3);
\end{scope}

\draw[->,thick] (4.7,-2.5)--(4.7,-5.3);
\filldraw[black] (4.7,-3.9) circle (0.000001pt) node[anchor=east] {column exchange rule};

\begin{scope}[shift={(11.5,0)}]

\filldraw[black] (0.35,6.5) circle (0.000001pt) node[anchor=south] {{$(\lambda,f^{\st}_{\lambda})$}};

\draw[blue] (0,0)--(0.7,0);
\draw[blue] (0,0)--(0,6);
\draw[blue] (0,6)--(0.7,6);
\draw[blue] (0.7,0)--(0.7,6);

\filldraw[black] (1.2,3) circle (0.000001pt) node[anchor=south] {\textcolor{blue}{$\dots$}};

\begin{scope}[shift={(1.7,0)}]
    \draw[blue] (0,0)--(0.7,0);
\draw[blue] (0,0)--(0,5.5);
\draw[blue] (0,5.5)--(0.7,5.5);
\draw[blue] (0.7,0)--(0.7,5.5);
\end{scope}

\begin{scope}[shift={(2.6,0)}]
    \draw[black] (0,0)--(0.7,0);
\draw[black] (0,0)--(0,4.4);
\draw[black] (0,4.4)--(0.7,4.4);
\draw[black] (0.7,0)--(0.7,4.4);
\filldraw[black] (0.35,-1) circle (0.000001pt) node[anchor=south] {\textcolor{black}{$i$}};
\draw[->] (0.35, -1) .. controls (-1,-1.5) .. (-3.2,-0.3);
\end{scope}

\begin{scope}[shift={(3.5,0)}]
\draw[red] (0,0)--(0.7,0);
\draw[red] (0,0)--(0,4);
\draw[red] (0,4)--(0.7,4);
\draw[red] (0.7,0)--(0.7,4);
\end{scope}

\filldraw[black] (4.6,2) circle (0.000001pt) node[anchor=south] {\textcolor{red}{$\dots$}};

\begin{scope}[shift={(4.9,0)}]
\draw[red] (0,0)--(0.7,0);
\draw[red] (0,0)--(0,3.7);
\draw[red] (0,3.7)--(0.7,3.7);
\draw[red] (0.7,0)--(0.7,3.7);
\end{scope}

\begin{scope}[shift={(5.8,0)}]
\draw[black] (0,0)--(0.7,0);
\draw[black] (0,0)--(0,3.5);
\draw[black] (0,3.5)--(0.7,3.5);
\draw[black] (0.7,0)--(0.7,3.5);
\draw[black] (0,2.8)--(0.7,2.8);
\filldraw[black] (0.35,-1) circle (0.000001pt) node[anchor=south] {\textcolor{black}{$j$}};
\draw[->] (0.35, -1) .. controls (1.5,-1.5) .. (3.5,-0.3);
\end{scope}

\begin{scope}[shift={(6.7,0)}]
\draw[green] (0,0)--(0.7,0);
\draw[green] (0,0)--(0,2.6);
\draw[green] (0,2.6)--(0.7,2.6);
\draw[green] (0.7,0)--(0.7,2.6);
\end{scope}

\filldraw[black] (7.9,1.7) circle (0.000001pt) node[anchor=south] {\textcolor{green}{$\dots$}};

\begin{scope}[shift={(8.2,0)}]
\draw[green] (0,0)--(0.7,0);
\draw[green] (0,0)--(0,2.3);
\draw[green] (0,2.3)--(0.7,2.3);
\draw[green] (0.7,0)--(0.7,2.3);
\end{scope}

\draw[->,thick] (4.7,-2.5)--(4.7,-5.3);
\filldraw[black] (4.7,-3.9) circle (0.000001pt) node[anchor=east] {column exchange rule};

\end{scope}

\begin{scope}[shift={(0,-12.5)}]

\draw[->] (0.35, -0.2) .. controls (5,-1.2) .. (9.3,-0);

\begin{scope}[shift={(0.9,0)}]
\draw[blue] (0,0)--(0.7,0);
\draw[blue] (0,0)--(0,6);
\draw[blue] (0,6)--(0.7,6);
\draw[blue] (0.7,0)--(0.7,6);

\filldraw[black] (1.2,3) circle (0.000001pt) node[anchor=south] {\textcolor{blue}{$\dots$}};

\begin{scope}[shift={(1.7,0)}]
\draw[blue] (0,0)--(0.7,0);
\draw[blue] (0,0)--(0,5.5);
\draw[blue] (0,5.5)--(0.7,5.5);
\draw[blue] (0.7,0)--(0.7,5.5);
\end{scope}
\end{scope}

\begin{scope}[shift={(8.2,0)}]
\draw[black] (0,0)--(0.7,0);
\draw[black] (0,0)--(0,5.1);
\draw[black] (0,5.1)--(0.7,5.1);
\draw[black] (0.7,0)--(0.7,5.1);
\draw[black] (0,4.4)--(0.7,4.4);
\end{scope}

\begin{scope}[shift={(3.5,0)}]
\draw[red] (0,0)--(0.7,0);
\draw[red] (0,0)--(0,4);
\draw[red] (0,4)--(0.7,4);
\draw[red] (0.7,0)--(0.7,4);
\end{scope}

\filldraw[black] (4.6,2) circle (0.000001pt) node[anchor=south] {\textcolor{red}{$\dots$}};

\begin{scope}[shift={(4.9,0)}]
\draw[red] (0,0)--(0.7,0);
\draw[red] (0,0)--(0,3.7);
\draw[red] (0,3.7)--(0.7,3.7);
\draw[red] (0.7,0)--(0.7,3.7);
\end{scope}

\begin{scope}[shift={(0,0)}]
\draw[black] (0,0)--(0.7,0);
\draw[black] (0,0)--(0,2.8);
\draw[black] (0,2.8)--(0.7,2.8);
\draw[black] (0.7,0)--(0.7,2.8);
\end{scope}

\begin{scope}[shift={(5.8,0)}]
\draw[green] (0,0)--(0.7,0);
\draw[green] (0,0)--(0,2.6);
\draw[green] (0,2.6)--(0.7,2.6);
\draw[green] (0.7,0)--(0.7,2.6);
\end{scope}

\filldraw[black] (7.0,1.7) circle (0.000001pt) node[anchor=south] {\textcolor{green}{$\dots$}};

\begin{scope}[shift={(7.3,0)}]
\draw[green] (0,0)--(0.7,0);
\draw[green] (0,0)--(0,2.3);
\draw[green] (0,2.3)--(0.7,2.3);
\draw[green] (0.7,0)--(0.7,2.3);
\end{scope}

\draw[->,thick] (5,-2.1)--(5,-5.3);
\filldraw[black] (5,-3.7) circle (0.000001pt) node[anchor=east] {cycling};
\end{scope}

\begin{scope}[shift={(11.5,-12.5)}]

\draw[->] (0.35, -0.2) .. controls (5,-1.2) .. (9.3,-0);

\begin{scope}[shift={(0.9,0)}]
\draw[blue] (0,0)--(0.7,0);
\draw[blue] (0,0)--(0,6);
\draw[blue] (0,6)--(0.7,6);
\draw[blue] (0.7,0)--(0.7,6);

\filldraw[black] (1.2,3) circle (0.000001pt) node[anchor=south] {\textcolor{blue}{$\dots$}};

\begin{scope}[shift={(1.7,0)}]
    \draw[blue] (0,0)--(0.7,0);
\draw[blue] (0,0)--(0,5.5);
\draw[blue] (0,5.5)--(0.7,5.5);
\draw[blue] (0.7,0)--(0.7,5.5);
\end{scope}
\end{scope}

\begin{scope}[shift={(0,0)}]
    \draw[black] (0,0)--(0.7,0);
\draw[black] (0,0)--(0,4.4);
\draw[black] (0,4.4)--(0.7,4.4);
\draw[black] (0.7,0)--(0.7,4.4);
\end{scope}

\begin{scope}[shift={(3.5,0)}]
\draw[red] (0,0)--(0.7,0);
\draw[red] (0,0)--(0,4);
\draw[red] (0,4)--(0.7,4);
\draw[red] (0.7,0)--(0.7,4);
\end{scope}

\filldraw[black] (4.6,2) circle (0.000001pt) node[anchor=south] {\textcolor{red}{$\dots$}};

\begin{scope}[shift={(4.9,0)}]
\draw[red] (0,0)--(0.7,0);
\draw[red] (0,0)--(0,3.7);
\draw[red] (0,3.7)--(0.7,3.7);
\draw[red] (0.7,0)--(0.7,3.7);
\end{scope}

\begin{scope}[shift={(8.2,0)}]
    \draw[black] (0,0)--(0.7,0);
\draw[black] (0,0)--(0,3.5);
\draw[black] (0,3.5)--(0.7,3.5);
\draw[black] (0.7,0)--(0.7,3.5);
\draw[black] (0,2.8)--(0.7,2.8);
\end{scope}

\begin{scope}[shift={(5.8,0)}]
\draw[green] (0,0)--(0.7,0);
\draw[green] (0,0)--(0,2.6);
\draw[green] (0,2.6)--(0.7,2.6);
\draw[green] (0.7,0)--(0.7,2.6);
\end{scope}

\filldraw[black] (7.0,1.7) circle (0.000001pt) node[anchor=south] {\textcolor{green}{$\dots$}};

\begin{scope}[shift={(7.3,0)}]
\draw[green] (0,0)--(0.7,0);
\draw[green] (0,0)--(0,2.3);
\draw[green] (0,2.3)--(0.7,2.3);
\draw[green] (0.7,0)--(0.7,2.3);
\end{scope}

\draw[->,thick] (5,-2.1)--(5,-5.3);
\filldraw[black] (5,-3.7) circle (0.000001pt) node[anchor=east] {cycling};

\end{scope}

\begin{scope}[shift={(0,-25)}]

\filldraw[black] (5,-1) circle (0.000001pt) node[anchor=east] {$\mathfrak{D}_{\lambda,\mu}(\mu,f_{\mu}^{\st})$};

\draw[blue] (0,0)--(0.7,0);
\draw[blue] (0,0)--(0,6);
\draw[blue] (0,6)--(0.7,6);
\draw[blue] (0.7,0)--(0.7,6);

\filldraw[black] (1.2,3) circle (0.000001pt) node[anchor=south] {\textcolor{blue}{$\dots$}};

\begin{scope}[shift={(1.7,0)}]
    \draw[blue] (0,0)--(0.7,0);
\draw[blue] (0,0)--(0,5.5);
\draw[blue] (0,5.5)--(0.7,5.5);
\draw[blue] (0.7,0)--(0.7,5.5);
\end{scope}

\begin{scope}[shift={(7.3,0)}]
    \draw[black] (0,0)--(0.7,0);
\draw[black] (0,0)--(0,5.1);
\draw[black] (0,5.1)--(0.7,5.1);
\draw[black] (0.7,0)--(0.7,5.1);
\draw[black] (0,4.4)--(0.7,4.4);
\end{scope}

\begin{scope}[shift={(2.6,0)}]
\draw[red] (0,0)--(0.7,0);
\draw[red] (0,0)--(0,4);
\draw[red] (0,4)--(0.7,4);
\draw[red] (0.7,0)--(0.7,4);
\end{scope}

\filldraw[black] (3.7,2) circle (0.000001pt) node[anchor=south] {\textcolor{red}{$\dots$}};

\begin{scope}[shift={(4,0)}]
\draw[red] (0,0)--(0.7,0);
\draw[red] (0,0)--(0,3.7);
\draw[red] (0,3.7)--(0.7,3.7);
\draw[red] (0.7,0)--(0.7,3.7);
\end{scope}

\begin{scope}[shift={(8.2,0.7)}]
    \draw[black] (0,0)--(0.7,0);
\draw[black] (0,0)--(0,2.8);
\draw[black] (0,2.8)--(0.7,2.8);
\draw[black] (0.7,0)--(0.7,2.8);
\end{scope}

\begin{scope}[shift={(4.9,0)}]
\draw[green] (0,0)--(0.7,0);
\draw[green] (0,0)--(0,2.6);
\draw[green] (0,2.6)--(0.7,2.6);
\draw[green] (0.7,0)--(0.7,2.6);
\end{scope}

\filldraw[black] (6,1.7) circle (0.000001pt) node[anchor=south] {\textcolor{green}{$\dots$}};

\begin{scope}[shift={(6.4,0)}]
\draw[green] (0,0)--(0.7,0);
\draw[green] (0,0)--(0,2.3);
\draw[green] (0,2.3)--(0.7,2.3);
\draw[green] (0.7,0)--(0.7,2.3);
\end{scope}

\end{scope}

\begin{scope}[shift={(11.5,-25)}]

\draw[blue] (0,0)--(0.7,0);
\draw[blue] (0,0)--(0,6);
\draw[blue] (0,6)--(0.7,6);
\draw[blue] (0.7,0)--(0.7,6);

\filldraw[black] (1.2,3) circle (0.000001pt) node[anchor=south] {\textcolor{blue}{$\dots$}};

\begin{scope}[shift={(1.7,0)}]
    \draw[blue] (0,0)--(0.7,0);
\draw[blue] (0,0)--(0,5.5);
\draw[blue] (0,5.5)--(0.7,5.5);
\draw[blue] (0.7,0)--(0.7,5.5);
\end{scope}

\begin{scope}[shift={(8.1,0.7)}]
    \draw[black] (0,0)--(0.7,0);
\draw[black] (0,0)--(0,4.4);
\draw[black] (0,4.4)--(0.7,4.4);
\draw[black] (0.7,0)--(0.7,4.4);

\end{scope}

\begin{scope}[shift={(2.6,0)}]
\draw[red] (0,0)--(0.7,0);
\draw[red] (0,0)--(0,4);
\draw[red] (0,4)--(0.7,4);
\draw[red] (0.7,0)--(0.7,4);
\end{scope}

\filldraw[black] (3.7,2) circle (0.000001pt) node[anchor=south] {\textcolor{red}{$\dots$}};

\begin{scope}[shift={(4.0,0)}]
\draw[red] (0,0)--(0.7,0);
\draw[red] (0,0)--(0,3.7);
\draw[red] (0,3.7)--(0.7,3.7);
\draw[red] (0.7,0)--(0.7,3.7);
\end{scope}

\begin{scope}[shift={(7.3,0)}]
\draw[black] (0,0)--(0.7,0);
\draw[black] (0,0)--(0,3.5);
\draw[black] (0,3.5)--(0.7,3.5);
\draw[black] (0.7,0)--(0.7,3.5);
\draw[black] (0,2.8)--(0.7,2.8);
\end{scope}

\begin{scope}[shift={(6.7-1.8,0)}]
\draw[green] (0,0)--(0.7,0);
\draw[green] (0,0)--(0,2.6);
\draw[green] (0,2.6)--(0.7,2.6);
\draw[green] (0.7,0)--(0.7,2.6);
\end{scope}

\filldraw[black] (7.9-1.8,1.7) circle (0.000001pt) node[anchor=south] {\textcolor{green}{$\dots$}};

\begin{scope}[shift={(8.2-1.8,0)}]
\draw[green] (0,0)--(0.7,0);
\draw[green] (0,0)--(0,2.3);
\draw[green] (0,2.3)--(0.7,2.3);
\draw[green] (0.7,0)--(0.7,2.3);
\end{scope}

\filldraw[black] (5,-1) circle (0.000001pt) node[anchor=east] {$\mathfrak{D}_{\lambda,\mu}(\lambda,f_{\lambda}^{\st})$};

\end{scope}

\end{scope}

\end{tikzpicture}
\caption{Illustration of column exchange rule and cycling applied to the filled diagrams $(\mu,f^{\st}_{\mu})$ and$(\lambda,f^{\st}_{\lambda})$.}
\label{fig: visualization}
\end{figure}

\begin{lem}\label{lem: step1 validity}
    While performing (Step 1) in Definition~\ref{def: deformation}, the condition \eqref{eq: column exchange condition} is always satisfied. 
\end{lem}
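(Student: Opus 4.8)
The plan is to follow the diagram through (Step1) one operator $S_k$ at a time and to check, each time, that the two columns about to be exchanged satisfy \eqref{eq: column exchange condition}, so that the next $S_k$ is legitimately defined. The arguments for $\mathfrak{D}_{\lambda,\mu}(\mu,f^{\st}_{\mu})$ and $\mathfrak{D}_{\lambda,\mu}(\lambda,f^{\st}_{\lambda})$ are mirror images of each other, so I will describe only the former. Recall that applying $S$ to a pair (taller column, shorter column of height $m$) leaves the shorter column's filling untouched and alters the taller column's filling only by dividing its entry in row $m+1$ by $q$. The first phase $(S_1\cdots S_{j-1})$ of (Step1) drags column $j$ (height $a_j-1$) leftward past columns $j-1,\dots,1$, and the second phase $(S_{\ell-1}\cdots S_{i+1})$ drags the image of column $i$ (height $a_i$) rightward past all columns to its right. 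Because the parts of $\nu$ satisfy $a_1\geq\cdots\geq a_\ell$ and because $a_i-1\geq a_{i+1}$ and $a_{j-1}\geq a_j$, in every exchange that occurs the first-listed column is strictly taller than the second, which supplies the hypothesis $n>m$ needed to speak of $\mathcal{V}(n,m)$.

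The heart of the argument is to maintain the following description of the filled diagram at every intermediate stage: each column still carries the standard filling $f^{\st}_\mu$ of the corresponding column of $\mu$, except that certain entries have been divided by powers of $q$. Concretely, once column $j$ has been moved leftward past a column $C$, the entry of $C$ in row $a_j$ has been divided by $q$; and when, in the second phase, the travelling copy of column $i$ is moved rightward past a column of height $h$, the entry of column $i$ in row $h+1$ is divided by $q$. (If there are ties among the $a$'s, an entry may be divided several times, and the invariant records multiplicities.) I would prove this by induction on the number of operators already applied, reading off the effect of each $S$ from Definition~\ref{def: the operator S}; the only point to observe is that column $j$, being strictly shorter than everything to its left, is always the \emph{shorter} column in its exchanges and so keeps its standard filling throughout the first phase, while column $i$ is always the \emph{taller} column in the second phase.

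Granting this, verifying \eqref{eq: column exchange condition} for the pair exchanged at a given step is a direct computation. Writing $f^{\st}_\mu(i,\gamma)=q^{-\arm_\mu(i,\gamma)}t^{(\text{height of }\gamma)-i+1}$, condition \eqref{eq: column exchange condition} (for heights $n>m$) amounts to the ratio $f(i,\text{taller})/f(i,\text{shorter})$ being independent of $i$ for $1<i\leq m$ and equal to $q^{-1}f(m+1,\text{taller})$. Since the two columns involved are two columns of the partition $\mu$ with everything between them weakly taller than both on the relevant rows, the arm difference $\arm_\mu(i,\gamma)-\arm_\mu(i,\gamma')$ is constant on that range, giving the independence; and the equality with $q^{-1}f(m+1,\text{taller})$ reduces to checking that the powers of $q$ divided off by the invariant are precisely those needed to absorb $\arm_\mu(m+1,\text{taller})$ together with any divisions already applied to rows $\leq m$. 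Here one uses that $a_c+1>a_j$ for $c<j$, so that the row $m+1$ which $S$ is about to divide still holds its pristine value at that moment, and one tracks multiplicities in the tied case. I would carry this out for the first exchange of each phase and for one interior exchange; the remaining cases are identical, and the $\lambda$-case follows by the mirror argument, now with divisions recorded on row $a_i$. I expect the main obstacle to be exactly the bookkeeping of the second paragraph — pinning down which entries of which columns have been divided by $q$, and with what multiplicity, at the moment each $S_k$ is tested — since the underlying arm identities themselves are elementary consequences of $\mu$ (resp.\ $\lambda$) being partition-shaped.
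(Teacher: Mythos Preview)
Your approach matches the paper's: track how each $S_k$ modifies the filling (only the taller column's entry in row $m+1$ is divided by $q$) and then verify \eqref{eq: column exchange condition} as an arm-length identity. The paper, however, details $(\lambda,f^{\st}_\lambda)$ rather than $(\mu,f^{\st}_\mu)$, and this is not merely a mirror choice. For $\lambda$ the two phases touch disjoint column sets (column $j$ moves right past $j+1,\dots,\ell$; column $i$ moves left past $1,\dots,i-1$), so at every exchange the short column is pristine and only the travelling tall column carries accumulated $q$-divisions---all of them in rows strictly above the current $m$. In your $\mu$ case, column $j$ first sweeps left across columns $1,\dots,j-1$ (including column $i$), so in phase~2 both the travelling column $i$ and each short column $c$ with $i<c<j$ already carry a phase-1 division in row $a_j\le a_c=m$. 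The ratio test still passes because these two divisions cancel, but this is an extra check your ``independence'' sentence does not actually cover (it addresses only the standard arm difference, not the modified fillings). There is no real gap---your invariant is correct and the computation goes through---but the ``mirror images'' remark understates this asymmetry, and the paper's choice of $\lambda$ sidesteps it entirely.
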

\begin{proof}
    We will give a proof for the case when we perform (Step 1) to \((\lambda,f^{\st}_{\lambda})\). The remaining case \((\mu,f^{\st}_{\mu})\) can be proved in a similar way.

Assume we moved the \(j\)-th column of \((\lambda,f^{\st}_{\lambda})\) to the right \(n\)-times (\(n<\ell-j\)) by applying the sequence of operators \((S_{j+n-1}\dots S_{j})\), and denote the resulting filled diagram by \((D^{(n)},f_{D^{(n)}})\). Then the \(j\)-th column of \((\lambda,f^{\st}_{\lambda})\) now corresponds to the \((j+n)\)-th column of \((D^{(n)},f_{D^{(n)}})\), and by \eqref{eq: column exchange the second condition}, we have
\[
    f_{D^{(n)}}(x,j+n) = q^{-c_x}f^{\st}_{\lambda}(x,j)
\]
where \(c_x\) is the number of \((x-1)\)s in the list \(a_{j+1},\dots,a_{j+n}\). If $1<x\leq a_{j+n+1}$, it is easy to check that
\begin{align*}
    &c_{a_{j+n+1}+1}+\arm_{\lambda}(a_{j+n+1}+1,j)=n, \qquad c_x=0, \\
    &\arm_{\lambda}(x,j)=\arm_{\lambda}(x,j+n+1)+n+1, \quad\\
    &\leg_{\lambda}(x,j)+1=\leg_{\lambda}(x,j+n+1)+1+a_j-a_{j+n+1}.
\end{align*}
We conclude that for \(1<x\leq a_{j+n+1}\),
\begin{align*}
    f_{D^{(n)}}(x,j+n)
    &= q^{-\arm_{\lambda}(x,j)}t^{a_j-x+1}\\
    &= \left(q^{-n-1}t^{a_{j}-a_{j+n+1}}\right)\left(q^{-\arm_{\lambda}(x,j+n+1)}t^{a_{j+n+1}-x+1}\right)\\
    &= \left(q^{-1}f_{D^{(n)}}(a_{j+n+1}+1,j+n)\right)\left(f_{D^{(n)}}(x,j+n+1)\right).
\end{align*}
Thus, the condition \eqref{eq: column exchange condition} is satisfied, and we can apply the operator \(S_{j+n}\) to \((D^{(n)},f_{D^{(n)}})\).

With the same argument, we can show that the condition \eqref{eq: column exchange condition} is always satisfied while moving the \(i\)-th column of \((\lambda,f^{\st}_{\lambda})\) to the far left.

\end{proof}

\begin{lem}\label{lem: deformed shape is nice}
    Keep the notations in Definition~\ref{def: deformation}. Then the restriction to the last two columns of $\mathfrak{D}_{\lambda,\mu}(\mu,f_{\mu}^{\st})$ is in $\bar{\mathcal{V}}(a_i,a_j;\frac{T_{\mu}}{T_{\lambda}})$ and we have 
        $\mathfrak{D}_{\lambda,\mu}(\lambda,f_{\lambda}^{\st})=\bar{S}_{\ell-1}\left(\mathfrak{D}_{\lambda,\mu}(\mu,f_{\mu}^{\st})\right)$.
\end{lem}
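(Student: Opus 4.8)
The plan is to compute the two deformed filled diagrams $\mathfrak{D}_{\lambda,\mu}(\mu,f^{\st}_{\mu})$ and $\mathfrak{D}_{\lambda,\mu}(\lambda,f^{\st}_{\lambda})$ completely explicitly and then compare them against the definitions of $\bar{\mathcal{V}}$ and $\bar{S}$. The key simplification is that one application of $S$ (legal throughout by Lemma~\ref{lem: step1 validity}) acts very simply: of the two columns involved, the shorter one is carried across unchanged, while the taller one keeps all its entries except that the single entry sitting one row above the top of the shorter column gets divided by $q$ (the passage $q\alpha\mapsto\alpha$ in Figure~\ref{fig: generic filled diagrams column exchange lemma}). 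Iterating over (Step1): when a short column is dragged to an end past taller ones, it stays unchanged and each overtaken column loses a factor of $q$ in the row just above it; when a tall column is dragged to an end past shorter ones, the overtaken columns stay unchanged and the dragged column loses a factor of $q$ in the row just above each of them. Writing $f^{\st}$ through the arm and leg statistics (of $\mu$, resp.\ $\lambda$), all these extra $q^{-1}$'s absorb into the arm statistics, and after (Step1) and the cycling of (Step2) each entry becomes a single monomial $q^{-\bullet}t^{\bullet}$ in closed form.

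Granting these formulas I would first read off the shapes. Tracking positions, (Step1) drags column $j$ of $\mu$ to the far left and column $i$ of $\mu$ to the far right, and (Step2) cycles the far-left column to the far right with an upward shift by one; hence the last two columns of $\mathfrak{D}_{\lambda,\mu}(\mu,f^{\st}_{\mu})$ form the diagram $[[a_i],[a_j]\setminus\{1\}]$, exactly the shape $\bar\mu$ of $\bar{\mathcal{V}}(a_i,a_j;\cdot)$ with $n=a_i$, $m=a_j$ (and $n>m$ since $a_i\ge a_{i+1}+1>a_j$). Symmetrically, the last two columns of $\mathfrak{D}_{\lambda,\mu}(\lambda,f^{\st}_{\lambda})$ form $[[a_j],[a_i]\setminus\{1\}]$, the shape $\bar{S}$ outputs (Figure~\ref{fig: generic filled diagrams half}). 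Setting $\alpha=T_\mu/T_\lambda$ and using that $\mu$ (resp.\ $\lambda$) is $\nu$ with the top cell of column $j$ (resp.\ $i$) deleted, $\alpha=q^{i-j}t^{a_i-a_j}$. Checking \eqref{eq: column exchange half condition} for the last two columns of $\mathfrak{D}_{\lambda,\mu}(\mu,f^{\st}_{\mu})$ then reduces, via the closed formulas, to elementary $q$-exponent identities — essentially the fact that $\#\{c:i<c\le j,\ a_c\ge x-1\}$ is constant (equal to $j-i$) over the relevant range of $x$, which holds because each column strictly between $i$ and $j$ has height at least $a_j$ as $\mu$ is a partition. This confirms membership in $\bar{\mathcal{V}}(a_i,a_j;T_\mu/T_\lambda)$ and pins down $\alpha$.

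It then remains to check $\mathfrak{D}_{\lambda,\mu}(\lambda,f^{\st}_{\lambda})=\bar{S}_{\ell-1}\big(\mathfrak{D}_{\lambda,\mu}(\mu,f^{\st}_{\mu})\big)$ column by column. Since $\bar{S}_{\ell-1}$ touches only the last two columns, for columns $1,\dots,\ell-2$ I must show that the (Step1)-and-cycle images of the columns common to $\mu$ and $\lambda$ agree. These are the same diagrams, but a priori carry different standard fillings, because arms are measured in $\mu$ versus in $\lambda$; however $f^{\st}_\lambda$ differs from $f^{\st}_\mu$ on a common column by at most a factor $q^{\pm1}$, concentrated in row $a_i$ and/or row $a_j$, and the bookkeeping of the first paragraph shows these discrepancies are precisely cancelled by the difference between the $S$-passes performed in the two deformations (for instance, for a column to the left of $i$, the extra $q$ in row $a_i$ of $f^{\st}_\lambda$ is undone by the extra $S$-division incurred when column $i$ of $\lambda$ is dragged past it). For the last two columns I would plug the closed formulas straight into the defining relations of $\bar{S}$ and verify them, including the boundary entries in rows $2$ and $a_j+1$. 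I expect the main obstacle to be precisely this bookkeeping — keeping straight which column overtakes which, in what order, and how the $q^{-1}$'s accumulate, together with the fact that $f^{\st}_\mu$ and $f^{\st}_\lambda$ genuinely disagree on the shared columns, so that a cancellation argument is needed rather than a term-by-term equality; once that is organized, the rest is a direct if lengthy computation.
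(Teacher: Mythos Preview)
Your proposal is correct and follows essentially the same approach as the paper's proof: track how each application of $S$ modifies only the single entry of the taller column one row above the shorter column (by a factor $q^{-1}$), use this to get closed formulas for the deformed fillings, verify the shapes and the $\bar{\mathcal{V}}$ conditions for the last two columns via the arm/leg identities (the paper's equations \eqref{eq: finding alpha} and \eqref{eq: finding conditions sat}, your counting argument), and check agreement on the first $\ell-2$ columns by exhibiting the cancellation between the $f^{\st}_\mu$ vs.\ $f^{\st}_\lambda$ discrepancy and the differing $S$-passes. The paper organizes the column-by-column check as three explicit ranges $1\le y\le i-1$, $i\le y\le j-2$, $j-1\le y\le\ell-2$, but the content is exactly what you outline.
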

\begin{proof}For \( y \neq i, j \), we have
\begin{equation}\label{eq: lambda mu standard filling relation}
    \begin{cases*}
        q f_{\mu}^{\st}(x,y) = f_{\lambda}^{\st}(x,y), & \text{if  $x = a_i$  and $ y < i$}, \\
        f_{\mu}^{\st}(x,y) = q f_{\lambda}^{\st}(x,y), & \text{if  $x = a_j$  and $ y < j$}, \\
        f_{\mu}^{\st}(x,y) = f_{\lambda}^{\st}(x,y), & \text{otherwise}.
    \end{cases*}
\end{equation}

Denoting \(\mathfrak{D}_{\lambda,\mu}(\mu,f_{\mu}^{\st}) = (D_{\mu},f_{D_{\mu}})\) and \(\mathfrak{D}_{\lambda,\mu}(\lambda,f_{\lambda}^{\st}) = (D_{\lambda},f_{D_{\lambda}})\), we have
\begin{align*}
    D_{\mu} &= [[a_1], \dots, \hat{[a_i]}, \dots, \hat{[a_j]}, \dots, [a_{\ell}], [a_i], [a_j] \setminus \{1\}], \\
    D_{\lambda} &= [[a_1], \dots, \hat{[a_i]}, \dots, \hat{[a_j]}, \dots, [a_{\ell}], [a_j], [a_i] \setminus \{1\}],
\end{align*}
where \(\hat{[a_i]}\) denotes that \([a_i]\) is omitted.

We first claim that the fillings \(f_{D_{\mu}}\) and \(f_{D_{\lambda}}\) agree on the first \((\ell-2)\) columns.

For \(1 \leq y \leq i-1\), the \(y\)-th column of \(\mu\) (respectively \(\lambda\)) is swapped with the \(j\)-th column (respectively \(i\)-th column) of \(\mu\) (respectively \(\lambda\)) during (Step 1). By \eqref{eq: column exchange the second condition}, we obtain
\[
\begin{cases*}
    f_{D_{\mu}}(x,y) = q^{-1} f_{\mu}^{\st}(x,y), & \text{if  $x = a_j$}, \\
    f_{D_{\mu}}(x,y) = f_{\mu}^{\st}(x,y), & \text{otherwise},
\end{cases*}
\qquad
\begin{cases*}
    f_{D_{\lambda}}(x,y) = q^{-1} f_{\lambda}^{\st}(x,y), & \text{if  $x = a_i$}, \\
    f_{D_{\lambda}}(x,y) = f_{\lambda}^{\st}(x,y), & \text{otherwise}.
\end{cases*}
\]

For \(i \leq y \leq j-2\), the \(y\)-th column of \(D_{\mu}\) (respectively \(D_{\lambda}\)) corresponds to the \((y+1)\)-th column of \(\mu\) (respectively \(\lambda\)). Note that the \((y+1)\)-th column of \(\mu\) swaps with the \(i\)-th and \(j\)-th columns of \(\mu\) during (Step 1), but the filling on the \((y+1)\)-th column changes only when it swaps with a column of smaller length, namely the \(j\)-th column of \(\mu\). On the other hand, the \((y+1)\)-th column of \(\lambda\) does not swap with any other columns during (Step 1). Therefore, we have
\[
\begin{cases*}
    f_{D_{\mu}}(x,y) = q^{-1} f_{\mu}^{\st}(x,y+1), & \text{if  $x = a_j$}, \\
    f_{D_{\mu}}(x,y) = f_{\mu}^{\st}(x,y+1), & \text{otherwise},
\end{cases*}
\qquad
f_{D_{\lambda}}(x,y) = f_{\lambda}^{\st}(x,y+1).
\]

For \(j-1 \leq y \leq \ell-2\), the \(y\)-th column of \(D_{\mu}\) (respectively \(D_{\lambda}\)) corresponds to the \((y+2)\)-th column of \(\mu\) (respectively \(\lambda\)). In (Step 1), the \((y+2)\)-th column of \(\mu\) (respectively \(\lambda\)) does not swap with any column whose length is smaller than that of \((y+2)\)-th column. Hence, we simply have
\[
f_{D_{\mu}}(x,y) = f_{\mu}^{\st}(x,y+2),
\qquad
f_{D_{\lambda}}(x,y) = f_{\lambda}^{\st}(x,y+2).
\]

In conclusion, combining the above three cases with \eqref{eq: lambda mu standard filling relation}, we obtain that \(f_{D_{\mu}}(x,y) = f_{D_{\lambda}}(x,y)\) for \(1 \leq y \leq \ell-2\).

Let \((D_1, f_{D_1})\) (respectively \((D_2, f_{D_2})\)) be the filled subdiagram of \(\mathfrak{D}_{\lambda,\mu}(\mu,f_{\mu}^{\st})\) (respectively \(\mathfrak{D}_{\lambda,\mu}(\lambda,f_{\lambda}^{\st})\)) obtained by restricting to the last two columns. It remains to show that \((D_1, f_{D_1}) \in \bar{\mathcal{V}}(a_i,a_j;\frac{T_{\mu}}{T_{\lambda}})\) and that \((D_2, f_{D_2}) = \bar{S}(D_1, f_{D_1})\).

Note that in (Step 1), the \(i\)-th column of \(\mu\) swaps with the \((i+1)\)-th through \(\ell\)-th columns of \(\mu\), while the \(j\)-th column of \(\mu\) only swaps with columns that are longer than the \(j\)-th column. Therefore, we have
\[
f_{D_1}(x,1) = q^{-c_x} f_{\mu}^{\st}(x,i), \qquad f_{D_1}(x+1,2) = f_{\mu}^{\st}(x,j),
\]
where \(c_x\) is the number of \((x-1)\)s in the list \(a_{i+1}, \dots, a_{j}-1, \dots, a_{\ell}\).

Similarly, we have
\[
f_{D_2}(x,1) = q^{-d_x} f_{\lambda}^{\st}(x,j), \qquad f_{D_2}(x+1,2) = f_{\lambda}^{\st}(x,i),
\]
where \(d_x\) is the number of \((x-1)\)s in the list $a_{j+1}, \dots, a_{\ell}$.

For \(1<x\leq a_j\), we have the relation \(c_x + \arm_{\mu}(x,i) = \arm_{\mu}(x-1,i)\). Since \(a_{j-1} \geq a_j\) (as \(\lambda\) is a partition), we obtain
\begin{equation}\label{eq: finding alpha}
    q^{-1} f_{D_1}(a_j+1,1) = q^{-1-\arm_{\mu}(a_j,i)} t^{\leg_{\mu}(a_j+1,1)+1} = q^{i-j} t^{a_i-a_j} = \frac{T_{\mu}}{T_{\lambda}}.
\end{equation}

Now, for \(2<x\leq a_j\), it is easy to check that \(\arm_{\mu}(x-1,i) = j-i+\arm_{\mu}(x-1,j)\). Denoting $\alpha= q^{-1} f_{D_1}(a_j+1,1)$, we conclude
\begin{align}\label{eq: finding conditions sat}
    f_{D_1}(x,1)
    &= q^{-\arm_{\mu}(x-1,i)} t^{a_i-x+1} \\
    &= \left(q^{i-j} t^{a_i-a_j}\right) \left(q^{-\arm_{\mu}(x-1,j)} t^{a_j-x+1}\right) \nonumber \\
    &= \alpha f_{D_1}(x,2). \nonumber
\end{align}

Since \eqref{eq: finding alpha} and \eqref{eq: finding conditions sat} verify the conditions in \eqref{eq: column exchange half condition}, we have shown that \((D_1, f_{D_1}) \in \bar{\mathcal{V}}(a_i,a_j;\frac{T_{\mu}}{T_{\lambda}})\). The claim that \((D_2, f_{D_2}) = \bar{S}(D_1, f_{D_1})\) follows by a similar argument.

\end{proof}

\begin{example}\label{ex: 5.4}
     Let $\mu=[[4],[4],[3],[1],[1]]$ and $\lambda = [[4],[3],[3],[2],[1]]$. First of all, the filled diagram $(\mu,f^{\st}_\mu)$ is 
\[
    \ytableausetup{boxsize=3em}
    \begin{ytableau}
    q^{-1}t & t  \\
    q^{-2}t^2 & q^{-1}t^2 & t \\
    q^{-2}t^3 & q^{-1}t^3 & t^2\\
     & & & & 
    \end{ytableau}.
\]
After applying the sequence of operators $(S_4S_3)(S_1S_2S_3)$ to move the fourth column to the far left and the second column to the far right, we obtain the filled diagram
\[
    \begin{ytableau}
    \none & q^{-1}t & \none & \none & q^{-1}t \\
    \none & q^{-2}t^2 &  t & \none & q^{-1}t^2\\
    \none & q^{-3}t^3 & q^{-1}t^2 & \none & q^{-3}t^3 \\
    & & & &
    \end{ytableau}.
\]
Then by applying $\cycling$, we have the filled diagram $\mathfrak{D}_{\lambda,\mu}(\mu,f^{\st}_\mu)$
\[
    \begin{ytableau}
    q^{-1}t  & \none & \none & q^{-1}t & \none \\
    q^{-2}t^2 & t & \none & q^{-1}t^2\\
    q^{-3}t^3 & q^{-1}t^2 & \none & q^{-3}t^3 & \\
     & & & 
    \end{ytableau}.
\]

    On the other hand, the filled diagram $(\lambda,f^{\st}_\lambda)$ is 
\[
    \ytableausetup{boxsize=3em}
    \begin{ytableau}
    t  \\
    q^{-2}t^2 & q^{-1}t & t \\
    q^{-3}t^3 & q^{-2}t^2 & q^{-1}t^2 & t\\
     & & & &
    \end{ytableau}.
\]
After applying the sequence of operators $(S_1)(S_4)$ to move the second column to the far left and the fourth column to the far right, we obtain the filled diagram
\[
    \begin{ytableau}
    \none & q^{-1}t  \\
    q^{-1}t &  q^{-2}t^2 & t \\
    q^{-2}t^2 & q^{-3}t^3 & q^{-1}t^2 & \none & q^{-1}t\\
     & & & &
    \end{ytableau}.
\]
Then by applying $\cycling$, we have the filled diagram $\mathfrak{D}_{\lambda,\mu}(\lambda,f^{\st}_\lambda)$
\[
    \begin{ytableau}
    q^{-1}t  & \none & \none & \none & q^{-1}t\\
    q^{-2}t^2 & t & \none & \none & q^{-2}t^2\\
    q^{-3}t^3 & q^{-1}t^2 & \none & q^{-1}t & \\
     & & & 
    \end{ytableau}.
\]

We can check that the first three columns of $\mathfrak{D}_{\lambda,\mu}(\mu,f^{\st}_\mu)$ and $\mathfrak{D}_{\lambda,\mu}(\lambda,f^{\st}_\lambda)$ coincide, and that the last two columns of $\mathfrak{D}_{\lambda,\mu}(\mu,f^{\st}_\mu)$ are in $\bar{\mathcal{V}}(4,2;q^{-2}t^{2})$, where $q^{-2}t^{2}=\frac{T_{\mu}}{T_{\lambda}}$. In addition, the last two columns of $\mathfrak{D}_{\lambda,\mu}(\lambda,f^{\st}_\lambda)$ are obtained by applying $\bar{S}$ to the last two columns of $\mathfrak{D}_{\lambda,\mu}(\mu,f^{\st}_\mu)$.
\end{example}

\begin{definition}\label{def: butler permutation for two partitions}
Keep the notations in Definition~\ref{def: deformation}. We denote the shape of $\mathfrak{D}_{\lambda,\mu}(\mu,f^{\st}_\mu)$ by $D_{\mu}$ and let $D'$ be its subdiagram obtained by restricting to the last two columns. We define
\begin{align*}
    \mathfrak{B}_{\lambda,\mu} &:= \left\{w\in\mathfrak{S}_{|\mu|} : \std\left(\left.(w\downarrow_{D'}^{D_{\mu}})\right|_{[a_i-a_j,a_i+a_j-1]}\right) \in \mathfrak{B}_{2a_j} \right\}, \\
    \stat_{\lambda,\mu} &:= \stat_{\mathfrak{D}_{\lambda,\mu}(\mu,f^{\st}_\mu)}.
\end{align*}
\end{definition}

\begin{proof}[Proof of Theorem~\ref{thm: first main, F-expansion}]
Without loss of generality, we let $\mu$ be the partition obtained from $\lambda$ by moving a single cell to an upper row. By Lemma~\ref{lem: cycling} and Corollary~\ref{cor: column exchange}, we have
\begin{equation*}
    \widetilde{H}_{\mu}[X;q,t] = \widetilde{H}_{\mathfrak{D}_{\lambda,\mu}(\mu,f^{\st}_\mu)}[X;q,t], \qquad \widetilde{H}_{\lambda}[X;q,t] = \widetilde{H}_{\mathfrak{D}_{\lambda,\mu}(\lambda,f^{\st}_\lambda)}[X;q,t].
\end{equation*}
Therefore, we obtain
\begin{align*}
    \I_{\lambda,\mu}[X;q,t] &= \frac{T_{\lambda}\widetilde{H}_{\mu}[X;q,t]-T_{\mu}\widetilde{H}_{\lambda}[X;q,t]}{T_{\lambda}-T_{\mu}} \\
    &= \frac{\widetilde{H}_{\mathfrak{D}_{\lambda,\mu}(\mu,f^{\st}_\mu)}[X;q,t] - \frac{T_{\mu}}{T_{\lambda}}\widetilde{H}_{\mathfrak{D}_{\lambda,\mu}(\lambda,f^{\st}_\lambda)}[X;q,t]}{1-\frac{T_{\mu}}{T_{\lambda}}}.
\end{align*}
Now Lemma~\ref{lem: deformed shape is nice} and \eqref{eq: concatenation2} complete the proof.
\end{proof}

\subsection{Butler Words and monomial symmetric function expansion}
In this subsection, based on Theorem~\ref{thm: first main, F-expansion}, we provide a monomial symmetric function expansion for $\I_{\lambda,\mu}[X;q,t]$. The \emph{weight} of a word $w$ is given by $(m_1, m_2, \dots)$, where $m_i$ denotes the number of occurrences of $i$ in $w$. For a partition $\epsilon$, we say that $w$ is a \emph{Butler word} of type $\lambda, \mu$ with weight $\epsilon$ if the weight of $w$ is $\epsilon$ and $\std(w) \in \mathfrak{B}_{\lambda,\mu}$. We define $\mathfrak{B}_{\lambda,\mu}^{\epsilon}$ to be the set of Butler words of type $\lambda, \mu$ with weight $\epsilon$.

Recall that $F_{\alpha} = \sum_{\alpha \succeq \beta} M_{\beta}$ where $\alpha \succeq \beta$ means that $\alpha$ can be obtained by adding together adjacent parts of $\beta$. Also we have 
 $m_{\lambda} = \sum_{\alpha} M_{\alpha}$
where the sum ranges over all compositions $\alpha$ whose parts can be rearranged to give $\lambda$.
 For a partition $\epsilon$, it is straightforward to see that $F_{\iDes(w)}$ contains $M_{\epsilon}$ if and only if $w = \std(w')$ for some word $w'$ of weight $\epsilon$. Since we know that $\I_{\lambda,\mu}[X;q,t]$ is a symmetric function, to obtain the coefficient of $m_{\epsilon}$, it is enough to compute the coefficient of $M_{\epsilon}$. Therefore, we have the following monomial symmetric function expansion of $\I_{\lambda,\mu}[X;q,t]$.

\begin{corollary}\label{Cor: m-expansion}
Let $\nu$ be a partition and $\lambda, \mu \subseteq \nu$ be two distinct partitions such that $|\nu/\lambda| = |\nu/\mu| = 1$. Then we have
\begin{equation*}
    \I_{\lambda,\mu}[X;q,t] = \sum_{\epsilon \vdash n} \sum_{w \in \mathfrak{B}_{\lambda,\mu}^\epsilon} \stat_{\lambda,\mu}(w) m_\epsilon[X],
\end{equation*}
where $|\lambda| = |\mu| = n$.
\end{corollary}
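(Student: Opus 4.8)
The plan is to derive Corollary~\ref{Cor: m-expansion} directly from the $F$-expansion in Theorem~\ref{thm: first main, F-expansion} by tracking how monomial quasisymmetric functions sit inside fundamental quasisymmetric functions, and then using the symmetry of $\I_{\lambda,\mu}[X;q,t]$ to upgrade a statement about $M_\epsilon$-coefficients to one about $m_\epsilon$-coefficients. First I would recall the standard expansion $F_{n,S}[X] = \sum_{S \subseteq T \subseteq [n-1]} M_{T}[X]$, already stated in the preliminaries, and rephrase it combinatorially: for a permutation $w \in \mathfrak{S}_n$ and a composition $\epsilon$ of $n$, the coefficient of $M_\epsilon[X]$ in $F_{\iDes(w)}[X]$ is $1$ if $\iDes(w) \subseteq \Set(\epsilon)$ and $0$ otherwise. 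The key observation is that $\iDes(w) \subseteq \Set(\epsilon)$ holds precisely when $w$ arises as the standardization of some word $w'$ of weight $\epsilon$: indeed, given such a $w'$, the condition $i \in \iDes(w) = \des(w^{-1})$ forces $w'$ to strictly increase across the $i$-th boundary, so all descents of $w^{-1}$ lie among the boundaries $\Set(\epsilon)$; conversely, if $\iDes(w) \subseteq \Set(\epsilon)$ one reconstructs a weight-$\epsilon$ word $w'$ with $\std(w') = w$ by assigning values according to the blocks of $\epsilon$.

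Next I would assemble the $M_\epsilon$-coefficient of $\I_{\lambda,\mu}[X;q,t]$. Starting from
\[
\I_{\lambda,\mu}[X;q,t] = \sum_{w \in \mathfrak{B}_{\lambda,\mu}} \stat_{\lambda,\mu}(w)\, F_{\iDes(w)}[X],
\]
and extracting the coefficient of $M_\epsilon[X]$ using the observation above, the coefficient equals $\sum_{w} \stat_{\lambda,\mu}(w)$ where the sum is over $w \in \mathfrak{B}_{\lambda,\mu}$ such that $w = \std(w')$ for some word $w'$ of weight $\epsilon$; by the definition of Butler words $\mathfrak{B}_{\lambda,\mu}^\epsilon$ (those $w'$ of weight $\epsilon$ with $\std(w') \in \mathfrak{B}_{\lambda,\mu}$), and since $\stat_{\lambda,\mu}(w') = \stat_{\lambda,\mu}(\std(w'))$ by the remark in Definition~\ref{def: stat definition} that $\stat_{(D,f)}$ is defined identically on words, this coefficient is exactly $\sum_{w' \in \mathfrak{B}_{\lambda,\mu}^\epsilon} \stat_{\lambda,\mu}(w')$. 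Here one must be slightly careful: different words $w'$ of weight $\epsilon$ can standardize to the same permutation $w$, but each such permutation contributes $\stat_{\lambda,\mu}(w)$ at most once to the $M_\epsilon$-coefficient of $F_{\iDes(w)}[X]$, and correspondingly there is a unique weight-$\epsilon$ word standardizing to a given $w$ (the values are forced by $\epsilon$ once the relative order is fixed), so the bookkeeping matches on the nose.

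Finally I would invoke symmetry. Since $\I_{\lambda,\mu}[X;q,t]$ is a symmetric function — it is a $\QQ(q,t)$-linear combination of the modified Macdonald polynomials $\widetilde H_\lambda, \widetilde H_\mu$, hence lies in $\Lambda$ — its monomial-symmetric-function expansion $\I_{\lambda,\mu}[X;q,t] = \sum_{\epsilon \vdash n} c_\epsilon(q,t)\, m_\epsilon[X]$ has the property that $c_\epsilon(q,t)$ is the coefficient of any single monomial $X^\alpha$ with $\alpha$ a rearrangement of $\epsilon$; in particular $c_\epsilon$ is the coefficient of $M_\epsilon[X]$ (taking $\alpha = \epsilon$ as a partition, so $M_\epsilon$ is the sum over strictly increasing variable indices of $X^\epsilon$-type monomials). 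Combining this with the computation of the preceding paragraph gives
\[
c_\epsilon(q,t) = \sum_{w \in \mathfrak{B}_{\lambda,\mu}^\epsilon} \stat_{\lambda,\mu}(w),
\]
which is the claimed formula. The main obstacle, though a mild one, is the careful matching in the middle step between ``permutations $w \in \mathfrak{B}_{\lambda,\mu}$ with $\iDes(w) \subseteq \Set(\epsilon)$'' and ``Butler words of weight $\epsilon$'': one needs the bijection $w' \mapsto \std(w')$ from weight-$\epsilon$ words with $\std$ in $\mathfrak{B}_{\lambda,\mu}$ to permutations in $\mathfrak{B}_{\lambda,\mu}$ with descent set of the inverse contained in $\Set(\epsilon)$ to be well-defined and bijective, together with the $\std$-invariance of $\stat_{\lambda,\mu}$; everything else is a direct substitution, so no serious technical difficulty is expected.
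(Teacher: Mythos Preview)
Your proposal is correct and follows essentially the same approach as the paper: use the $F$-expansion from Theorem~\ref{thm: first main, F-expansion}, the fact that $M_\epsilon$ appears in $F_{\iDes(w)}$ if and only if $w=\std(w')$ for some word $w'$ of weight $\epsilon$, and the symmetry of $\I_{\lambda,\mu}[X;q,t]$ to read off the $m_\epsilon$-coefficient as the $M_\epsilon$-coefficient. The paper compresses all of this into two sentences, while you spell out the bijection between weight-$\epsilon$ words and permutations with $\iDes(w)\subseteq\Set(\epsilon)$ and the $\std$-invariance of $\stat_{\lambda,\mu}$; one minor wording slip is that for a \emph{fixed} composition $\epsilon$ the map $w'\mapsto\std(w')$ is already injective, so your caveat about ``different words of weight $\epsilon$'' standardizing to the same permutation is unnecessary (as you yourself note in the next clause).
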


\subsection{Specialization at $q=t=1$}
Recall that the modified Macdonald polynomial $\widetilde{H}_\mu[X;q,t]$ for any partition $\mu \vdash n$ at $q = t = 1$ gives $h_{(1^n)}[X]$, which is the Frobenius characteristic of the regular representation $\mathbb{C}[\mathfrak{S}_n]$ of $\mathfrak{S}_n$. We now provide an analogous result for $\I_{\lambda,\mu}[X;q,t]$.

\begin{corollary}\label{Cor: h21111 at q=t=1}
Let $\nu$ be a partition, and let $\lambda, \mu \subseteq \nu$ be two distinct partitions such that $|\nu/\lambda| = |\nu/\mu| = 1$. Then we have
\[
    \I_{\lambda,\mu}[X;1,1] = h_{(2, 1^{n-2})}[X],
\]
which depends only on the size $n=|\mu| = |\lambda|$.
\end{corollary}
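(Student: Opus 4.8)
The plan is to derive Corollary~\ref{Cor: h21111 at q=t=1} by combining the fundamental quasisymmetric expansion of Theorem~\ref{thm: first main, F-expansion} with the identity $\I_n[X] = h_{(2,1^{n-2})}[X]$ established in Proposition~\ref{Prop: h21111}. The key observation is that specializing $q=t=1$ in $\stat_{\lambda,\mu}(w)$ collapses every monomial in $q,t$ to $1$, so the weighted sum over $\mathfrak{B}_{\lambda,\mu}$ becomes an unweighted count of fundamental quasisymmetric functions.

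First I would recall from Definition~\ref{def: butler permutation for two partitions} that, writing $D_\mu$ for the shape of $\mathfrak{D}_{\lambda,\mu}(\mu,f^{\st}_\mu)$ and $D'$ for its last two columns, the set $\mathfrak{B}_{\lambda,\mu}$ consists of those $w\in\mathfrak{S}_{|\mu|}$ for which $\std\big(w\downarrow_{D'}^{D_\mu}\big|_{[a_i-a_j,\,a_i+a_j+1]}\big)\in\mathfrak{B}_{2a_j}$. Setting $q=t=1$ in Theorem~\ref{thm: first main, F-expansion} gives
\[
    \I_{\lambda,\mu}[X;1,1] = \sum_{w\in\mathfrak{B}_{\lambda,\mu}} F_{\iDes(w)}[X],
\]
since $\stat_{\lambda,\mu}(w)=\stat_{\mathfrak{D}_{\lambda,\mu}(\mu,f^{\st}_\mu)}(w)$ is a monomial in $q,t$, hence equal to $1$ at $q=t=1$. (Here $n=|\mu|=|\lambda|$ and $a_i,a_j$ are as in Definition~\ref{def: butler permutation for two partitions}, with the cell positions determined by $\lambda,\mu$.)

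Next I would reduce the right-hand side to $\sum_{v\in\mathfrak{B}_{2a_j}}F_{\iDes(v)}$ up to multiplication by $h_1$'s. The constraint defining $\mathfrak{B}_{\lambda,\mu}$ only restricts the standardization of the subword of $w$ read off from the columns of $D'$ of size strictly greater than $1$ (i.e., the $2a_j$ letters indexed by positions $a_i - a_j,\dots,a_i+a_j+1$); the remaining $n - 2a_j$ letters of $w$ are unconstrained. Grouping the permutations $w\in\mathfrak{B}_{\lambda,\mu}$ according to the standardization $v\in\mathfrak{B}_{2a_j}$ of this distinguished subword, Lemma~\ref{lem: pieri rule for F} (with $m=2a_j$, $k = n-2a_j$, and $A$ the position set of the distinguished subword) yields
\[
    \sum_{\substack{w\in\mathfrak{S}_n\\ \std(w|_A)=v}} F_{\iDes(w)}[X] = (h_1)^{\,n-2a_j}\,F_{\iDes(v)}[X].
\]
Actually I should be slightly careful: Lemma~\ref{lem: pieri rule for F} is stated with $\iDes(v)$ on the right, but $\iDes(w)$ depends on the full permutation $w$, not just $v$ — the lemma asserts precisely that the sum over all $w$ with fixed $\std(w|_A)=v$ collapses to $(h_1)^k F_{\iDes(v)}$, which is exactly what shuffle-compatibility of the descent set gives. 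Summing over $v\in\mathfrak{B}_{2a_j}$ and using Proposition~\ref{Prop: h21111} with $n$ replaced by $2a_j$,
\[
    \I_{\lambda,\mu}[X;1,1] = (h_1)^{\,n-2a_j}\sum_{v\in\mathfrak{B}_{2a_j}}F_{\iDes(v)}[X] = h_1^{\,n-2a_j}\, \I_{2a_j}[X] = h_1^{\,n-2a_j}\, h_{(2,1^{2a_j-2})}[X] = h_{(2,1^{n-2})}[X],
\]
which is manifestly independent of $\lambda,\mu$ beyond the common size $n$.

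The main obstacle is the bookkeeping in the reduction step: verifying that the position set $A = \{N_{D_\mu}(u) : u\in D', u \text{ not a bottom cell, plus the appropriate bottom cells}\}$ used in Definition~\ref{def: butler permutation for two partitions} really does isolate exactly a $2a_j$-letter subword whose standardization ranges over all of $\mathfrak{B}_{2a_j}$ as $w$ ranges over $\mathfrak{B}_{\lambda,\mu}$, with the complementary letters entirely free. This is essentially unwinding the notation $w\downarrow^{D_\mu}_{D'}\big|_{[a_i-a_j,a_i+a_j+1]}$ and matching it against the hypotheses of Lemma~\ref{lem: pieri rule for F}; once that identification is in place, the rest is the formal computation above. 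An alternative, perhaps cleaner route avoiding some of this bookkeeping would be to specialize $q=t=1$ directly in \eqref{eq: concatenation2} of Proposition~\ref{lem: main LLT lemma} applied to $(D,f)=\mathfrak{D}_{\lambda,\mu}(\mu,f^{\st}_\mu)$ and then invoke Lemma~\ref{lem: pieri rule for F} and Proposition~\ref{Prop: h21111} as above; I would present whichever bookkeeping turns out shorter.
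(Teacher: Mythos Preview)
Your proposal is correct and follows essentially the same approach as the paper's proof: specialize $q=t=1$ in Theorem~\ref{thm: first main, F-expansion} so that $\stat_{\lambda,\mu}(w)$ collapses to $1$, observe that $\mathfrak{B}_{\lambda,\mu}$ is exactly $\{w\in\mathfrak{S}_n:\std(w|_{A'})\in\mathfrak{B}_{|A'|}\}$ for a suitable subset $A'\subseteq[n]$ of size $2a_j$, apply Lemma~\ref{lem: pieri rule for F} to factor out $(h_1)^{n-|A'|}$, and conclude with Proposition~\ref{Prop: h21111}. The paper's proof is essentially a two-line version of your argument, so the bookkeeping you worry about (identifying the position set $A$) is treated there just as lightly as you suspected it could be.
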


\begin{proof}
    Note that the set $\mathfrak{B}_{\lambda,\mu}$ can be described as $\{w \in \mathfrak{S}_n : \std(w \vert_{A'}) \in \mathfrak{B}_{|A'|}\}$ for some subset $A' \subseteq [n]$. We conclude
    \begin{align*}
         \I_{\lambda,\mu}[X;1,1] &= \sum_{w \in \mathfrak{B}_{\lambda,\mu}} F_{\iDes(w)} \\
         &= \sum_{\{w \in \mathfrak{S}_n : \std(w \vert_{A'}) \in \mathfrak{B}_{|A'|}\}} F_{\iDes(w)} \\
         &= (h_1)^{n - |A'|} \left( \sum_{v \in \mathfrak{B}_{|A'|}} F_{\iDes(v)} \right).
    \end{align*}
    Here, the last equality follows from Lemma~\ref{lem: pieri rule for F}. The proof is then completed by Proposition~\ref{Prop: h21111}.
\end{proof}

\begin{rmk}
    In the companion paper \cite{KLO22+}, we give an explicit expansion (in terms of complete homogeneous symmetric functions) formula for the Macdonald intersection polynomial $\I_{\mu^{(1)},\dots,\mu^{(k)}}[X;q,t]$ at $q=t=1$. Corollary~\ref{Cor: h21111 at q=t=1} corresponds to the case $k=2$ of \cite[Theorem 1.3]{KLO22+}.
\end{rmk}

\section{Proof of Theorem~\ref{thm: second main, s-positivities}}
\label{Sec: Proof of Schur positivities}
\subsection{LLT polynomials}
This subsection covers the background for LLT polynomials. The original definition of LLT polynomials involves ribbon tableaux and spin statistics \cite{LLT97}. Later, an alternative model was discovered by Haiman and Bylund \cite{HHLRU05}, and the two models are directly related via the Littlewood quotient map (sometimes called the Stanton--White correspondence \cite{SW85}). Subsequently, a fundamental quasisymmetric function expansion of LLT polynomials was given in \cite{HHL05}, which we adopt as the definition.

Let $\bmnu = (\nu^{(1)}, \nu^{(2)}, \dots)$ be a tuple of skew partitions. For a standard tableau $\bmT = (T^{(1)}, T^{(2)}, \dots)$ of shape $\bmnu$, an \emph{inversion} of $\bmT$ is a pair of cells $u \in \nu^{(i)}$ and $v \in \nu^{(j)}$ such that $T^{(i)}(u) > T^{(j)}(v)$ and either
\begin{itemize}
    \item $i < j$ and $c(u) = c(v)$, or
    \item $i > j$ and $c(u) = c(v) + 1$,
\end{itemize}
where $c(u)$ denotes the \emph{content} of the cell $u = (a, b)$, which is $a - b$.
Denote by $\inv(\bmT)$ the number of inversions in $\bmT$. For a tuple of skew shapes, the \emph{reading order} is a total order on cells such that $u < v$ if $c(u) < c(v)$ or $c(u) = c(v)$ and $u$ is to the left of $v$. A \emph{reading word} of a semistandard tableau of a tuple of skew shapes is the word obtained by recording its entries in the reading order (from larger to smaller in that order). We denote the reading word of a tableau $\bmT$ as $\rw(\bmT)$. Finally, the \emph{LLT polynomial} $\llt_\bmnu[X;q]$ is defined by
\[
\llt_\bmnu[X;q] := \sum_{\bmT \in \syt(\bmnu)} q^{\inv(\bmT)} F_{\iDes(\rw(\bmT))}.
\]
For a tuple $\bmnu = (\nu^{(1)}, \dots, \nu^{(\ell)})$ of skew partitions, we have
\begin{equation}\label{eq: cycling11}
    \llt_{\bmnu}[X;q]
    = \llt_{(\nu^{(2)}, \dots, \nu^{(\ell)}, \mu)}[X;q],
\end{equation}
where $\mu$ is the skew partition obtained by replacing each cell $(a,b)\in \nu^{(1)}$ with the cell $(a+1,b)$.
 The proof of \eqref{eq: cycling11} is a routine exercise and parallels that of Lemma~\ref{lem: cycling}.

We summarize some key properties of LLT polynomials. By specializing $q = 1$, LLT polynomials become the product of the skew Schur functions:
\[
    \llt_\bmnu[X; 1] = \prod_{i \geq 1} s_{\nu^{(i)}}[X].
\]
Thus, LLT polynomials are $q$-analogues of the product of the skew Schur functions. Although it is not immediate from the definition, LLT polynomials are indeed symmetric functions \cite{LLT97, HHL05}. A prominent result of Haiman and Grojnowski \cite{GH07} shows that LLT polynomials are Schur-positive. Their proof relies on the Kazhdan-Lusztig theory of the Hecke algebra of affine type $A_n$. However, their proof does not provide a combinatorial formula for the Schur coefficients.

Now we introduce a notion of LLT equivalence, which will be an important tool for the remainder of this section. 
\begin{definition}
    Two $\mathbb{F}$-linear combinations, $\sum_{\bmnu} a_\bmnu(q,t) \llt_{\bmnu}[X;q]$ and $\sum_{\bmmu} b_\bmmu(q,t) \llt_{\bmmu}[X;q]$, of LLT polynomials are called \emph{LLT equivalent}, which we denote
\[
\sum_{\bmnu} a_\bmnu(q,t) \llt_{\bmnu}[X;q] \equiv \sum_{\bmmu} b_\bmmu(q,t) \llt_{\bmmu}[X;q]
\]
if for every tuple of skew partitions $\bmlambda$, we have
\[
  \sum_\bmnu a_\bmnu(q,t) \llt_{(\bmnu,\bmlambda)}[X;q] = \sum_\bmmu b_\bmmu(q,t) \llt_{(\bmmu,\bmlambda)}[X;q].
\]
Here, for a tuple $\bmnu$ of skew partitions, $(\bmnu,\bmlambda)$ indicates a tuple of skew partitions obtained by concatenating $\bmlambda$ after $\bmnu$. We will frequently abuse the notation and denote the LLT equivalence by:
\[
    \sum_\bmnu a_\bmnu(q,t) \bmnu \equiv \sum_\bmmu b_\bmmu(q,t) \bmmu.
\]
Due to \eqref{eq: cycling11} we also have 
\[
  \sum_\bmnu a_\bmnu(q,t) \llt_{(\bmlambda',\bmnu,\bmlambda)}[X;q] = \sum_\bmmu b_\bmmu(q,t) \llt_{(\bmlambda',\bmmu,\bmlambda)}[X;q].
\]
\end{definition}
for any tuples of skew partitions $\bmlambda$ and $\bmlambda'$.

\begin{lem}\label{lem: llt equivalence condition}
Assume there is a bijection $\Phi : \cup_{\bmnu} \syt(\bmnu) \to \cup_{\bmmu} \syt(\bmmu)$ satisfying the following:
\begin{align*}
    (\Phi1) \quad &a_{\bmnu}(q,t) q^{\inv(\bmT)} = b_{\bmmu}(q,t) q^{\inv(\Phi(\bmT))}, \text{ where $\bmT \in \syt(\bmnu)$ \text{ and } $\Phi(\bmT)\in \syt(\bmmu)$,}  \\
    (\Phi2) \quad &\iDes(\rw(\bmT)) = \iDes(\rw(\Phi(\bmT))), \text{ and } \\
    (\Phi3) \quad &\{\bmT(u): c(u) = m\} = \{\Phi(\bmT)(u): c(u) = m\} \text{ for all $m$}.
\end{align*}
Then we have \[
    \sum_\bmnu a_\bmnu(q,t) \bmnu \equiv \sum_\bmmu b_\bmmu(q,t) \bmmu.
\]
\end{lem}
\begin{proof}
For any tuple $\bmlambda$ of skew partitions, we construct a map
$\overline{\Phi}: \bigcup_{\bmnu} \syt((\bmnu,\bmlambda)) \to \bigcup_{\bmmu} \syt((\bmmu,\bmlambda))$
as follows. For $\bmT \in \syt((\bmnu,\bmlambda))$, let $\bmT'$ (respectively $Q$) be the tableau obtained by restricting $\bmT$ to the shape $\bmnu$ (respectively $\bmlambda$). Entries in $\bmT'$ are distinct; denote them by $p_1 < p_2 < \cdots < p_{\ell}$. Next, consider $\bmT'' \in \syt(\bmnu)$ obtained by replacing each $p_i$ with $i$ in $\bmT'$. Then $\Phi(\bmT'') \in \syt(\bmmu)$ for some $\bmmu$. Let $P$ be the tableau obtained from $\Phi(\bmT'')$ by replacing each $i$ with $p_i$. Finally, we define $\overline{\Phi}(\bmT) \in \syt((\bmmu,\bmlambda))$ such that, restricted to the shape $\bmmu$ (respectively $\bmlambda$), it coincides with $P$ (respectively $Q$).

Now it suffices to prove that:
\begin{align*}
    (1) \quad &a_{\bmnu}(q,t) q^{\inv(\bmT)} = b_{\bmmu}(q,t) q^{\inv(\overline{\Phi}(\bmT))}, \text{where $\bmT \in \syt((\bmnu,\bmlambda))$ and $\overline{\Phi}(\bmT)\in \syt((\bmmu,\bmlambda))$,}  \\
    (2) \quad &\iDes(\rw(\bmT)) = \iDes(\rw(\overline{\Phi}(\bmT))).
\end{align*}
We first prove (2). Assume it is not true, and without loss of generality, take $r$ such that $r \notin \iDes(\rw(\bmT))$ and $r \in \iDes(\rw(\overline{\Phi}(\bmT)))$. Let $u,v \in (\bmnu,\bmlambda)$ and $u',v' \in (\bmmu,\bmlambda)$ be such that $\bmT(u) = r$, $\bmT(v) = r+1$, $\overline{\Phi}(\bmT)(u') = r$, and $\overline{\Phi}(\bmT)(v') = r+1$. Then $u < v$ in reading order while $u' > v'$. It is not possible to have $u,v \in \bmlambda$, and it is also not possible to have $u,v \in \bmnu$, since in that case property $(\Phi2)$ would be violated. Now assume $u \in \bmnu$ and $v \in \bmlambda$. By property $(\Phi3)$, we have $c(u) = c(u')$ and  $c(v) = c(v')$, so it is impossible to have both $u < v$ and $u' > v'$. A similar argument applies to the case where $u \in \bmlambda$ and $v \in \bmmu$, leading to a contradiction.

To prove (1), define
\begin{align*}
    &A=\{(u,v)\in \Inv(\bmT):u,v\in \bmnu \},\quad \hspace{26mm} A'=\{(u,v)\in \Inv(\overline{\Phi}(\bmT)):u,v\in \bmmu \}\\
    &B=\{(u,v)\in \Inv(\bmT): \text{only one of $u,v$ lies in $\bmnu$}\},\quad B'=\{(u,v)\in\Inv(\overline{\Phi}(\bmT)): \text{only one of $u,v$ lies in $\bmmu$}\}\\
    &C=\{(u,v)\in \Inv(\bmT):u,v\in \bmlambda \},\quad \hspace{26mm} C'=\{(u,v)\in \Inv(\overline{\Phi}(\bmT)):u,v\in \bmlambda \}
\end{align*}
where the notation $\Inv(\bmT)$ denotes the set of inversions in $\bmT$. Then we have \begin{equation*}
    a_{\bmnu}(q,t) q^{|A|} = b_{\bmmu}(q,t) q^{|A'|}
\end{equation*} by property $(\Phi2)$, and $|C| = |C'|$, which follows directly from the construction of the map $\overline{\Phi}$. Now take $(u,v) \in B$ with $u \in \bmnu$ and $v \in \bmlambda$. There exists $u' \in \bmmu$ such that $\bmT(u) = \overline{\Phi}(\bmT)(u')$. By property $(\Phi3)$, we have $c(u) = c(u')$, so we deduce that $(u',v) \in B'$. Similarly, one can associate an element in $B'$ to each $(u,v) \in B$ with $v \in \bmnu$ and $u \in \bmlambda$, and this correspondence is a bijection. Therefore, we conclude that $|B| = |B'|$, and (1) is proved.
\end{proof}

\subsection{LLT expansion of the (generalized) modified Macdonald polynomials}\label{subsec: LLT expansion of the modified Macdonald}
A \emph{ribbon} is a connected skew partition that contains no $2 \times 2$ block of cells. Note that the contents of the cells in a ribbon are consecutive integers. The \emph{descent set} of a ribbon $R$ is the set of contents $c(u)$ of those cells $u = (i,j) \in R$ such that the cell $v = (i-1,j)$, which lies directly below $u$, also belongs to $R$. For an interval $I = [r, r+s]$, there is a natural one-to-one correspondence between ribbons of content $I$ and subsets $S \subseteq I \setminus \{r\}$, determined by the descent set of each ribbon. We denote the ribbon with content set $I$ and descent set $S$ by $R_I(S)$. As a special case, we denote $C_a := R_{\{a\}}(\emptyset)$ which is a ribbon of size 1 whose content is $a$.

For a diagram $D=[D^{(1)},D^{(2)},\dots,]$ and a subset of cells $S\subseteq D^{+}$, let
\[
    S^{(j)}:=\{i:(i,j)\in S\}.
\]
We define $\bmR_D(S)$ to be a tuple of ribbons defined by
\[
\bmR_{D}(S):=(R_{D^{(1)}}(S^{(1)}),R_{D^{(2)}}(S^{(2)}),\dots).
\]
This allows us to give an expansion of the modified Macdonald polynomials for a filled diagram into LLT polynomials indexed by tuples of ribbons: For a filled diagram $(D,f)$, we have 
\begin{equation}\label{eq: modified macdonald to llt}
\widetilde{H}_{(D,f)}[X;q,t] = \sum_{S\subseteq D^{+}} f(S)\llt_{\bmR_{D}(S)}[X;q],
\end{equation}
where $f(S) := \prod_{u\in S}f(u)$. 
The following example illustrates the ribbon LLT expansion of the modified Macdonald polynomial for a filled diagram.
\begin{example}
    Let $(D,f)$ be the filled diagram from Example~\ref{ex: filled diagram and hdf}. Let $C_2$ be the cell with content $2$, and let $V$ and $H$ denote the vertical and horizontal strips with contents $\{1,2\}$, respectively. Then,
    \[
        \widetilde{H}_{(D,f)} = \alpha \llt_{(C_2,V)} + \llt_{(C_2, H)} = \alpha (F_{\{1\}} + F_{\{2\}} + q F_{\{1,2\}}) + (F_{\emptyset}+ qF_{\{1\}} + qF_{\{2\}}),
    \]
    which agrees with the computation in Example~\ref{ex: filled diagram and hdf}.
\end{example}

    With the notations in Proposition~\ref{lem: column exchange}, the identity 
    \begin{equation*}
        \widetilde{H}_{(\mu,f_{\mu})}[X;q,t]=\widetilde{H}_{(\lambda,f_{\lambda})}[X;q,t]
    \end{equation*}
    can be written in terms of LLT polynomials using \eqref{eq: modified macdonald to llt}. Moreover, the map $\phi_{n,m}$ serves as a bijection satisfying  $(\Phi1)$, $(\Phi2)$, and $(\Phi3)$ (which corresponds to $(\phi1)$, $(\phi2)$, and $(\phi3)$, respectively). Thus we have 
    \begin{equation*}
        \widetilde{H}_{(\mu,f_{\mu})}[X;q,t]\equiv\widetilde{H}_{(\lambda,f_{\lambda})}[X;q,t],
    \end{equation*}
     which implies \eqref{eq: column exchange final}.
\subsection{Proof of Theorem~\ref{thm: second main, s-positivities}}
Let $\mu$ be a partition obtained from $\lambda$ by moving a single cell in the first or the second row to the upper row. Then the last two columns of $\mathfrak{D}_{\lambda,\mu}(\mu,f_{\mu}^{\st})$ are in $\bar{\mathcal{V}}(n,m;\frac{T_{\mu}}{T_{\lambda}})$ for $m=1$ or $2$. It suffices to prove the following.

\begin{lem}\label{lem: s positivity reduction}
    Let $(D,f)\in \bar{\mathcal{V}}(n,m;\alpha)$ and $(D',f')=\bar{S}(D,f)$. Then \[\dfrac{\widetilde{H}_{(D,f)}[X;q,t] - \alpha \widetilde{H}_{(D',f')}[X;q,t]}{1 - \alpha}\] is LLT equivalent to a positive linear combination of LLT polynomials, for $m=1$ or $2$.
\end{lem}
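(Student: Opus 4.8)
The plan is to combine Proposition~\ref{lem: main LLT lemma} with the LLT expansion \eqref{eq: modified macdonald to llt} and then reduce matters to a finite verification for $m=1$ and $m=2$. Since $\bar{\mu}=[[n],[m]\setminus\{1\}]$ has exactly $n+m-1$ cells and the window $[n-m,n+m-1]$ has length $2m$, the ``in particular'' identity of Proposition~\ref{lem: main LLT lemma} already writes the divided difference as $\sum_{w}\stat_{(D,f)}(w)\,F_{\iDes(w)}$, summed over those $w\in\mathfrak{S}_{n+m-1}$ with $\std(w\vert_{[n-m,n+m-1]})\in\mathfrak{B}_{2m}$. Writing $\stat_{(D,f)}(w)=\inv_D(w)\prod_u f(u)$ (product over the descent cells of $w$ in $D$) and grouping the sum by the ribbon tuple $\bmR_D$ attached to those descent cells, one reduces to the following claim: for every admissible ribbon tuple on the two columns of $D$, the leftover sum of $\inv_D(w)F_{\iDes(w)}$ — now cut down by the Butler condition on the $2m$ bottom cells — is LLT equivalent to a nonnegative combination of LLT polynomials. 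Because the Butler condition and all statistic contributions not already recorded by $\bmR_D$ are confined to the last $2m$ positions of $w$, it suffices to prove this claim for $m=1,2$; the embedded statement \eqref{eq: concatenation2} then follows from the same locality (condition $(\Phi3)$) used in Section~\ref{subsec: LLT expansion of the modified Macdonald}.

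For $m=1$ the diagram $D=[[n]]$ is a single column, $\mathfrak{B}_2=\{12\}$, and the condition $\std(w_{n-1}w_n)=12$ says exactly that the cell $(2,1)$ is not a descent of $w$. Hence the divided difference equals $\sum_{A\subseteq\{3,\dots,n\}}f(A)\,\llt_{\bmR_D(A)}[X;q]$, where $f(A)=\prod_{u\in A}f(u)$ and each $\bmR_D(A)$ is a single ribbon; this is already a nonnegative combination of LLT polynomials (and, if straight shapes are wanted, one further application of the standard LLT straightening, cf.~\cite{Lee21}, finishes the job), so the case $m=1$ is essentially immediate.

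The case $m=2$ is the crux, and I expect it to be the main obstacle. Here $D=[[n],[2,2]]$ with $|D|=n+1$; the window is the four cells $(3,1),(2,1),(2,2),(1,1)$ carrying $w_{n-2},w_{n-1},w_n,w_{n+1}$, the Butler condition reads $\std(w_{n-2}w_{n-1}w_nw_{n+1})\in\mathfrak{B}_4$ for the explicit $12$-element set $\mathfrak{B}_4$, the column-$1$ ribbon is determined by the descent cells of $w$ in column $1$, the cell $(2,2)$ contributes a single-cell component, and $\inv_D(w)=q^{\chi(w_{n-1}>w_n)+\chi(w_n>w_{n+1})}$. Because membership in $\mathfrak{B}_4$ is not determined by the descent set of $w$ in $D$ alone, the Butler condition genuinely dissects each LLT polynomial into pieces, and these pieces must be reassembled — using the column exchange rule (Proposition~\ref{lem: column exchange}) together with the LLT equivalences of \cite{Mil19, Lee21, HNY20, AS22, Tom21} — into honest LLT polynomials, possibly of different shapes, with nonnegative coefficients. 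Concretely I would enumerate the twelve patterns in $\mathfrak{B}_4$, group the permutations within each pattern by their column-$1$ ribbon, and for each group exhibit a $\Phi$-bijection (in the sense of $(\Phi1)$--$(\Phi3)$) realizing the desired LLT equivalence; the delicate point is matching the $q$-exponent of $\inv_D$ on the window against the $\mathfrak{B}_4$-pattern termwise, and it is this finite-but-intricate bookkeeping that constitutes the bulk of the argument.
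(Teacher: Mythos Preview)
Your approach is genuinely different from the paper's, and for $m=2$ it remains a plan rather than a proof. The paper does not go through Proposition~\ref{lem: main LLT lemma} or Butler permutations at all; it works directly with the LLT expansions \eqref{eq: modified macdonald to llt} of $\widetilde{H}_{(D,f)}$ and $\widetilde{H}_{(D',f')}$, reduces to the minimal case $n=m+1$, and then applies the ready-made LLT equivalences of Proposition~\ref{prop: llt equivalence bandwidth 2} (for $m=1$) and Proposition~\ref{Prop: LLT equivalences bandwidth 3} (Miller's four identities, for $m=2$). Concretely, for $m=2$ and $n=3$ one rewrites the four terms $\llt_{(R_{[3]}(S),C_2)}$ appearing in $\widetilde{H}_{(D,f)}$ and the four terms $\llt_{(H_1,H_2)},\llt_{(H_1,V_2)},\llt_{(V_1,H_2)},\llt_{(V_1,V_2)}$ appearing in $\widetilde{H}_{(D',f')}$ via Proposition~\ref{Prop: LLT equivalences bandwidth 3} and then simply reads off
\[
\dfrac{\widetilde{H}_{(D,f)}-\alpha\,\widetilde{H}_{(D',f')}}{1-\alpha}\ \equiv\ \llt_{(R_{[3]}(\emptyset),C_2)}+\alpha\,\llt_{(H_1,V_2)}+q\alpha\beta\,\llt_{(S)},
\]
a three-term positive combination. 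No enumeration over $\mathfrak{B}_4$ is needed.

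Your $m=1$ argument is essentially fine: the condition $\std(w_{n-1}w_n)=12$ is exactly ``no descent at $(2,1)$'', so the Butler-restricted sum is already a positive sum of single-ribbon LLT polynomials, and locality of $\zeta$ upgrades this to an LLT equivalence. But for $m=2$ you correctly observe that membership in $\mathfrak{B}_4$ is strictly finer than the descent data, so the Butler-restricted sum is \emph{not} a sum of LLT polynomials, and you then propose to reassemble the twelve pattern classes via ad hoc $\Phi$-bijections---yet you do not construct a single one. That reassembly \emph{is} the entire content of the lemma for $m=2$; announcing that you would do it is not a proof. The paper's route through Proposition~\ref{Prop: LLT equivalences bandwidth 3} short-circuits this completely: the needed bijections are already packaged inside Miller's equivalences, and the whole computation occupies a handful of lines rather than a twelve-case analysis.
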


\begin{proof}[Proof of Theorem~\ref{thm: second main, s-positivities})] 
Denote $(D,f) = \mathfrak{D}_{\lambda,\mu}(\mu, f_{\mu}^{\st})$ and $(D',f') = \mathfrak{D}_{\lambda,\mu}(\lambda, f_{\lambda}^{\st})$, and let $(\bar{D}, \bar{f})$ be the filled diagram obtained by restricting to all columns except the last two columns of $(D, f)$ (or $(D', f')$, since they differ only in the last two columns). Let $(E, g)$ (respectively $(E', g')$) be the filled diagram obtained by restricting to the last two columns of $(D, f)$ (respectively $(D', f')$). Recall that for any diagram $D$, we denote by $D^+$ the collection of cells in $D$ that are not bottom cells.

We have
\begin{align*}
    \widetilde{H}_{(D,f)}[X;q,t] &= \sum_{S \subseteq \bar{D}^+} \bar{f}(S) \sum_{S' \subseteq E^+} g(S') \llt_{(\bmR_{\bar{D}}(S), \bmR_{E}(S'))}[X; q], \\
    \widetilde{H}_{(D',f')}[X;q,t] &= \sum_{S \subseteq \bar{D}^+} \bar{f}(S) \sum_{S' \subseteq (E')^+} g'(S') \llt_{(\bmR_{\bar{D}}(S), \bmR_{E'}(S'))}[X; q].
\end{align*}

By Lemma~\ref{lem: s positivity reduction}, we have
\[
    \frac{\widetilde{H}_{(E, g)}[X; q, t] - \alpha\, \widetilde{H}_{(E', g')}[X; q, t]}{1 - \alpha} \equiv \sum_{\bmnu} a_{\bmnu}(q, t) \llt_{\bmnu}[X; q],
\]
for some positive polynomials $a_{\bmnu}(q, t)$ and $\alpha = \frac{T_{\mu}}{T_{\lambda}}$. Therefore, for any $S \subseteq \bar{D}^+$, we conclude
\begin{align*}
    &\frac{\bar{f}(S) \sum_{S' \subseteq E^+} g(S')\llt_{(\bmR_{\bar{D}}(S), \bmR_{E}(S'))}[X; q] - \alpha \bar{f}(S) \sum_{S' \subseteq (E')^+} g'(S') \llt_{(\bmR_{\bar{D}}(S), \bmR_{E'}(S'))}[X; q]}{1 - \alpha} \\
    &= \bar{f}(S) \sum_{\bmnu} a_{\bmnu}(q, t) \llt_{(\bmR_{\bar{D}}(S), \bmnu)}[X; q].
\end{align*}

Finally, summing over $S \subseteq \bar{D}^+$, we obtain
\begin{equation*}
    \I_{\lambda, \mu}[X; q, t] = \sum_{S \subseteq \bar{D}^+} \bar{f}(S) \sum_{\bmnu} a_{\bmnu}(q, t) \llt_{(\bmR_{\bar{D}}(S), \bmnu)}[X; q],
\end{equation*}
which proves Schur positivity.

\end{proof}

To prove Lemma~\ref{lem: s positivity reduction}, we need various LLT equivalences. We use Proposition~\ref{prop: llt equivalence bandwidth 2} to show Lemma~\ref{lem: s positivity reduction} for $m=1$. Then we use Proposition~\ref{Prop: LLT equivalences bandwidth 3} to show Lemma~\ref{lem: s positivity reduction} for $m=2$. See Figure~\ref{Fig: figure for Prop 3.4} for the pictorial description of Proposition~\ref{prop: llt equivalence bandwidth 2} and Proposition~\ref{Prop: LLT equivalences bandwidth 3}.

\begin{figure}[ht]
    \centering
    \ytableausetup{nosmalltableaux}
    \ytableausetup{boxsize=2em}
    \ytableausetup{nobaseline}
    \begin{equation*}
                \begin{ytableau}
                \none & \none[\diag] & & \none[\diag]\\
                \none[\diag] & \none[\diag] &\none[\diag] & \none[\diag]\\
                \none[\diag] & &\none[\diag] & \none 
                \end{ytableau}  
                \qquad = q \begin{ytableau}
                \none[\diag] & & \none[\diag]&\none\\
                \none[\diag] & & \none[\diag] 
                \end{ytableau}  + \begin{ytableau}
                \none[\diag] &\none[\diag] & \none[\diag]\\ &  & \none[\diag]
                \end{ytableau}  
            \end{equation*} 

    \begin{equation*}
                \begin{ytableau}
                \none & \none & \none[\diag] & \none[\diag]\\
                \none &  \none[\diag] &  & \\
                \none[\diag] & \none[\diag] &\none[\diag] & \none[\diag]\\
                \none[\diag] & & & \none 
                \end{ytableau}  
                \qquad =
                q^2
                \begin{ytableau}
                \none & \none  & \none[\diag]  & \none[\diag]  \\
                \none & \none[\diag]  & \none[\diag]  & \none[\diag]  \\
                \none[\diag] &   &   &  \none[\diag] \\
                \none[\diag] &   &   & \none
                \end{ytableau} 
                \qquad +
                \begin{ytableau}
                \none & \none  & \none[\diag]  & \none[\diag]  \\
                \none & \none[\diag]  & \none[\diag]  & \\
                \none[\diag] & \none[\diag]  & \none[\diag]  &  \none[\diag] \\
                 &   &   & \none
                \end{ytableau} 
            \end{equation*} 
     \begin{equation*}
                \begin{ytableau}
                \none & \none & \none & \\
                \none &  \none & \none[\diag] & \\
                \none & \none[\diag]&\none[\diag]&\none[\diag]\\
                \none[\diag] & & \none[\diag]&\none\\
                \none[\diag] & & \none & \none
                \end{ytableau}  
                \qquad =
                q
                \begin{ytableau}
                \none & \none  & \none[\diag]  & \none[\diag]  \\
                \none & \none[\diag]  & \none[\diag]  & \none[\diag]  \\
                \none[\diag] &   &   &  \none[\diag] \\
                \none[\diag] &   &   & \none
                \end{ytableau} 
                \qquad + q
                \begin{ytableau}
                \none & \none[\diag]  &  & \none[\diag]  \\
                 & \none[\diag]  & \none[\diag]  & \none  \\
                 & \none[\diag]  & \none  &  \none \\
                 & \none  & \none  & \none
                \end{ytableau} 
        \end{equation*}
     \begin{equation*}
                        \begin{ytableau}
                \none & \none & \none[\diag] & \none[\diag]\\
                \none &  \none[\diag] &  &\none[\diag] \\
                 & \none[\diag] &\none[\diag] & \none[\diag]\\
                 & & \none[\diag]& \none 
                \end{ytableau}  
                \qquad =
                q
                \begin{ytableau}
                \none & \none  & \none[\diag]  & \none[\diag]  \\
                \none & \none[\diag]  & \none[\diag]  & \none[\diag]  \\
                \none[\diag] &   &   &  \none[\diag] \\
                \none[\diag] &   &   & \none
                \end{ytableau} 
                \qquad + q^{-1}
                \begin{ytableau}
                \none & \none  &  & \none[\diag]  \\
                \none & \none[\diag]  & & \none[\diag]  \\
                \none[\diag] & \none[\diag]  & \none[\diag]  & \none \\
                 &   & \none  & \none
                \end{ytableau} 
        \end{equation*}
     \begin{equation*}
                \begin{ytableau}
                \none & \none & \none[\diag] & \\
                \none &  \none[\diag] & \none[\diag] & \none[\diag]\\
                 &  &\none[\diag] & \none \\
                \none[\diag] & & \none & \none 
                \end{ytableau}  
                \qquad =
                q
                \begin{ytableau}
                \none & \none  & \none[\diag]  & \none[\diag]  \\
                \none & \none[\diag]  & \none[\diag]  & \none[\diag]  \\
                \none[\diag] &   &   &  \none[\diag] \\
                \none[\diag] &   &   & \none
                \end{ytableau} 
                \qquad +                 \begin{ytableau}
                \none &  & & \none[\diag]  \\
                \none[\diag] & \none[\diag]  & \none[\diag]  & \none\\
                 & \none[\diag]  & \none  &  \none \\
                &  \none & \none  & \none
                \end{ytableau} 
        \end{equation*}
    \caption{Demonstration of LLT equivalences in Proposition~\ref{prop: llt equivalence bandwidth 2} and \ref{Prop: LLT equivalences bandwidth 3}.}
    \label{Fig: figure for Prop 3.4}
\end{figure}

\begin{proposition}\label{prop: llt equivalence bandwidth 2}
Let $V$ be a vertical strip, and $H$ be a horizontal strip of content $1,2$. Then the following LLT equivalence holds:
\[
    (C_1,C_2) \equiv q (V) + (H).
\]
\end{proposition}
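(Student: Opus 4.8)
The plan is to prove the LLT equivalence $(C_1,C_2) \equiv q(V) + (H)$ by constructing an explicit bijection realizing the conditions $(\Phi1)$, $(\Phi2)$, $(\Phi3)$ from Section~\ref{subsec: LLT expansion of the modified Macdonald}. First I would spell out the combinatorial objects on both sides. On the left, a standard tableau of shape $(C_1, C_2, \bmlambda)$ for an auxiliary tuple $\bmlambda$ amounts to a standard filling of the disjoint union of a cell of content $1$, a cell of content $2$, and the cells of $\bmlambda$; write $a$ for the entry in $C_1$ and $b$ for the entry in $C_2$. On the right, $V$ is the vertical strip with cells of content $1$ and $2$ stacked (the content-$1$ cell below), so a standard filling has the smaller of its two entries in the content-$1$ cell; and $H$ is the horizontal strip with the content-$1$ cell to the left of the content-$2$ cell, again with the smaller entry on the left. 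In all three cases the remaining cells of $\bmlambda$ carry the complementary entries, and the reading order on the content-$1$ and content-$2$ cells is: first the content-$1$ cell, then the content-$2$ cell (since within a content class we read left to right, and across classes by increasing content). So in every case the two distinguished cells contribute, in reading order, either $(a,b)$ or $(b,a)$, and the reading word restricted to $\bmlambda$ is unchanged.

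Next I would define the bijection $\Phi$ on $\bigcup_{\bmlambda}\syt((C_1,C_2,\bmlambda))$ as follows: given a standard tableau $\bmT$ with entries $a$ in $C_1$ and $b$ in $C_2$, if $a < b$ send it to the standard tableau of shape $(H,\bmlambda)$ obtained by placing $a$ in the left cell and $b$ in the right cell (and keeping the filling of $\bmlambda$); if $a > b$ send it to the standard tableau of shape $(V,\bmlambda)$ obtained by placing $b$ in the bottom (content-$1$) cell and $a$ in the top (content-$2$) cell. This is evidently a bijection with the stated codomain, and $(\Phi3)$ is immediate: the multiset of entries of content $m$ is preserved for every $m$, since the content-$1$ cell keeps entry $a$ in the horizontal case and gets entry $b$ in the vertical case while the content-$2$ cell does the reverse, and content $1$ (resp.\ $2$) is the union of one distinguished cell with the content-$1$ (resp.\ $2$) cells of $\bmlambda$. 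For $(\Phi2)$ I would check that the reading word is literally unchanged in the $a<b$ (horizontal) case — the distinguished cells still contribute $(a,b)$ in reading order — and that in the $a>b$ (vertical) case the distinguished cells contribute $(b,a)$ in reading order, which is again exactly the reading-word contribution of $C_1,C_2$ when $b$ sat in $C_1$; so $\rw(\bmT) = \rw(\Phi(\bmT))$ in both cases, giving $\iDes$ equality.

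The only point requiring care — and the main (very mild) obstacle — is the inversion bookkeeping $(\Phi1)$, i.e.\ verifying $\inv(\bmT) = \inv(\Phi(\bmT))$ when $a<b$ and $\inv(\bmT) = \inv(\Phi(\bmT)) + 1$ when $a>b$, where the $+1$ matches the coefficient $q$ in front of $(V)$. I would argue this by decomposing $\inv$ into the inversions internal to $\{C_1,C_2\}$ (resp.\ $\{$the two cells of $V$ or $H\}$), those between a distinguished cell and a cell of $\bmlambda$, and those internal to $\bmlambda$. The last type is unaffected. For the cross inversions one uses that inversions depend only on contents and relative entry order: a cell of content $c$ in $\bmlambda$ forms an inversion with a content-$1$ or content-$2$ distinguished cell according to exactly the same content conditions ($i<j, c(u)=c(v)$ or $i>j, c(u)=c(v)+1$ from the LLT definition), and since $(\Phi3)$ fixes which entry sits at each content the count of these is preserved — one must just check the two sub-tuples $(C_1,C_2)$ and $V$ have the same relative position ($i<j$ versus $i>j$ in the tuple ordering) to the $\bmlambda$-components so the content-based rules apply identically, which they do since $V$, $H$, and the pair $(C_1,C_2)$ all occupy the first two slots. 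Finally, the internal term: $(C_1,C_2)$ with $i<j$ (two separate tuple-components, $C_1$ before $C_2$, same content offset convention) has an internal inversion iff the content-$1$ entry exceeds the content-$2$ entry in the appropriate sense; $H$ being a single connected ribbon with the two cells in the same component has no internal inversion in the $a<b$ case; and $V$, with its descent between content $1$ and $2$, contributes exactly one extra inversion relative to what $(C_1,C_2)$ contributed — which is where the factor $q$ comes from. Assembling these three contributions yields $(\Phi1)$, completing the construction of $\Phi$ and hence the proposition.
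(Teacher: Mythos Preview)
Your overall strategy---construct a bijection and verify $(\Phi1)$, $(\Phi2)$, $(\Phi3)$---is exactly what the paper intends by ``straightforward,'' but you have the two cases reversed, and the error is not merely notational.

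With the paper's convention $c(i,j)=i-j$ (French notation), moving right along a row \emph{decreases} content, so the horizontal ribbon $H=R_{[2]}(\emptyset)$ has its content-$2$ cell on the left and its content-$1$ cell on the right; a standard filling of $H$ therefore has content-$2$ entry $<$ content-$1$ entry. The vertical ribbon $V$ has content-$1$ below content-$2$, so its standard fillings have content-$1$ entry $<$ content-$2$ entry. Since $(\Phi3)$ demands that the entry at \emph{each fixed content} be preserved (not just the unordered pair $\{a,b\}$), the case $a<b$ must map to $V$ and the case $a>b$ to $H$, keeping $a$ at content $1$ and $b$ at content $2$ throughout. Your map instead swaps the entries between content classes, which breaks $(\Phi3)$ outright and with it the reading-word preservation you claim for $(\Phi2)$.

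The inversion bookkeeping is likewise inverted. A single ribbon contributes no internal LLT inversions (inversions occur only between cells lying in distinct tuple components), so the descent of $V$ does not produce one. The sole internal inversion of $(C_1,C_2)$ comes from taking $u=C_2$, $v=C_1$: here $i>j$ and $c(u)=c(v)+1$, so it is an inversion iff $T(C_2)>T(C_1)$, i.e.\ iff $b>a$. Thus it is the $a<b$ case that carries one extra inversion on the $(C_1,C_2)$ side; mapping that case to $V$ (with entries fixed at each content) loses exactly this inversion, which is the source of the coefficient $q$. With these two corrections your argument goes through.
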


\begin{proof}
There are two standard Young tableaux of shape $(C_1, C_2)$:
$T^{(1)}$, obtained by filling $C_1$ with $1$ and $C_2$ with $2$, and
$T^{(2)}$, obtained by filling $C_1$ with $2$ and $C_2$ with $1$.
Then we have $\inv(T^{(1)}) = 1$, $\rw(T^{(1)}) = 21$, 
$\inv(T^{(2)}) = 0$, and $\rw(T^{(2)}) = 12$.
The unique standard Young tableau of shape $V$ corresponds to $T^{(1)}$, 
and the unique standard Young tableau of shape $H$ corresponds to $T^{(2)}$.
It is straightforward to verify that the conditions in 
Lemma~\ref{lem: llt equivalence condition} are satisfied.
\end{proof}

\begin{proposition}\cite{Mil19}\label{Prop: LLT equivalences bandwidth 3}
Let $V_1 = R_{[2]}(\{2\})$ and $V_2 = R_{\{2,3\}}(\{3\})$ denote the vertical strips with contents $\{1,2\}$ and $\{2,3\}$, respectively. Let $H_1 = R_{[2]}(\emptyset)$ and $H_2 = R_{\{2,3\}}(\emptyset)$ be the horizontal strips with contents $\{1,2\}$ and $\{2,3\}$. Finally, let $S$ be the $2\times 2$ square with cell contents $\{1, 2, 2, 3\}$. Then, the following LLT equivalences hold:
\begin{enumerate}
    \item $(H_1,H_2) \equiv q^2(S) + (R_{[3]}(\emptyset), C_2)$.
    \item $(V_1,V_2) \equiv q(S) + q(R_{[3]}(\{2,3\}), C_2)$.
    \item $(R_{[3]}(\{3\}),C_2) \equiv q(S) + q^{-1}(H_1,V_2)$.
    \item $(R_{[3]}(\{2\}),C_2) \equiv q(S) + (V_1,H_2)$.
\end{enumerate}
\end{proposition}

\begin{proof}[Proof of Lemma~\ref{lem: s positivity reduction} for $m=1$]
We aim to construct a bijection between $\mathrm{SYT}(D)$ and $\mathrm{SYT}(D')$ that preserves $\mathrm{inv}$, $\mathrm{iDes}$, and content, as required by Lemma~\ref{lem: llt equivalence condition}. Observe that the proposed equivalence involves a local transformation on cells containing the entries $\{1, 2\}$. For any $n > 2$, we extend the bijection by fixing the position of all entries $k > 2$. Thus, it suffices to verify the claim for the case $n=2$.

Consider the diagrams $(D,f)$ and $(D',f')$ defined by:
\[
    (D,f) = \begin{ytableau}
    q \alpha \\
    & \none
    \end{ytableau},
    \qquad \qquad \text{and} \qquad \qquad
    (D',f') = \begin{ytableau}
    \none & \\
    & \none
    \end{ytableau}.
\]
Let $V$ denote the vertical strip and $H$ denote the horizontal strip. Using the expansion in \eqref{eq: modified macdonald to llt}, we have:
\[
    \widetilde{H}_{(D,f)}[X;q,t] = q\alpha \mathrm{LLT}_{(V)}[X;q] + \mathrm{LLT}_{(H)}[X;q].
\]
Similarly, applying \eqref{eq: modified macdonald to llt} followed by the equivalence from Proposition~\ref{prop: llt equivalence bandwidth 2}, we obtain:
\[
    \widetilde{H}_{(D',f')}[X;q,t] = \mathrm{LLT}_{(C_1,C_2)}[X;q] \equiv q \mathrm{LLT}_{(V)}[X;q] + \mathrm{LLT}_{(H)}[X;q].
\]
Combining these results, we compute:
\begin{align*}
    \widetilde{H}_{(D,f)} - \alpha \widetilde{H}_{(D',f')} &\equiv (q\alpha \mathrm{LLT}_{(V)} + \mathrm{LLT}_{(H)}) - \alpha(q \mathrm{LLT}_{(V)} + \mathrm{LLT}_{(H)}) \\
    &= \mathrm{LLT}_{(H)} - \alpha \mathrm{LLT}_{(H)} \\
    &= (1 - \alpha) \mathrm{LLT}_{(H)}.
\end{align*}
Dividing by $(1-\alpha)$, we conclude:
\[
    \frac{\widetilde{H}_{(D,f)}[X;q,t] - \alpha \widetilde{H}_{(D',f')}[X;q,t]}{1 - \alpha} \equiv \mathrm{LLT}_{(H)}[X;q]. \qedhere
\]
\end{proof}

\begin{proof}[Proof of Lemma~\ref{lem: s positivity reduction} for $m=2$]
As discussed in the proof for the $m=1$ case, it suffices to consider the restriction where $n=m+1=3$. Consider the filled diagrams $(D,f)$ and $(D',f')$ defined by:
\[
    (D,f) = \begin{ytableau}
    q\alpha \\
    \alpha\beta & \\
    & \none
    \end{ytableau},
    \qquad \qquad \text{and} \qquad \qquad
    (D',f') = \begin{ytableau}
    \none & \alpha\\
    \beta & \\
    &\none
    \end{ytableau}.
\]
Let $R_1 = R_{[3]}(\emptyset)$, $R_2 = R_{[3]}(\{3\})$, $R_3 = R_{[3]}(\{2\})$, and $R_4 = R_{[3]}(\{2,3\})$ denote the ribbon shapes. Using the expansion in \eqref{eq: modified macdonald to llt}, we obtain:
\begin{align*}
    \widetilde{H}_{(D,f)}[X;q,t] &= \mathrm{LLT}_{(R_1,C_2)} + q\alpha\mathrm{LLT}_{(R_2,C_2)} + \alpha\beta\mathrm{LLT}_{(R_3,C_2)} + q\alpha^2\beta\mathrm{LLT}_{(R_4,C_2)}, \\
    \widetilde{H}_{(D',f')}[X;q,t] &= \mathrm{LLT}_{(H_1,H_2)} + \alpha\mathrm{LLT}_{(H_1,V_2)} + \beta \mathrm{LLT}_{(V_1,H_2)} + \alpha\beta\mathrm{LLT}_{(V_1,V_2)}.
\end{align*}
We apply Proposition~\ref{Prop: LLT equivalences bandwidth 3} (1) and (2) to the terms $\mathrm{LLT}_{(R_2,C_2)}$ and $\mathrm{LLT}_{(R_3,C_2)}$ in the expansion of $\widetilde{H}_{(D,f)}$:
\begin{align*}
    \widetilde{H}_{(D,f)} &=\mathrm{LLT}_{(R_1,C_2)}
    + q\alpha \left( q\mathrm{LLT}_{(S)} + q^{-1}\mathrm{LLT}_{(H_1,V_2)} \right)
     + \alpha\beta \left( q\mathrm{LLT}_{(S)} + \mathrm{LLT}_{(V_1,H_2)} \right)
    + q\alpha^2\beta\mathrm{LLT}_{(R_4,C_2)} \\
    &\equiv \mathrm{LLT}_{(R_1,C_2)} + \alpha \mathrm{LLT}_{(H_1,V_2)} + \alpha\beta \mathrm{LLT}_{(V_1,H_2)}
    + (q^2\alpha + q\alpha\beta)\mathrm{LLT}_{(S)} + q\alpha^2\beta\mathrm{LLT}_{(R_4,C_2)}.
\end{align*}
Next, we apply Proposition~\ref{Prop: LLT equivalences bandwidth 3} (3) and (4) to the terms $\mathrm{LLT}_{(H_1,H_2)}$ and $\mathrm{LLT}_{(V_1,V_2)}$ in the expansion of $\widetilde{H}_{(D',f')}$:
\begin{align*}
    \widetilde{H}_{(D',f')} &\equiv
    \left( q^2\mathrm{LLT}_{(S)} + \mathrm{LLT}_{(R_1,C_2)} \right)
    + \alpha\mathrm{LLT}_{(H_1,V_2)} + \beta\mathrm{LLT}_{(V_1,H_2)}
    + \alpha\beta \left( q\mathrm{LLT}_{(S)} + q\mathrm{LLT}_{(R_4,C_2)} \right).
\end{align*}
Finally, we compute the difference quotient. Observing the cancellations, we have:
\[
    \widetilde{H}_{(D,f)} - \alpha \widetilde{H}_{(D',f')} \equiv (1-\alpha)\mathrm{LLT}_{(R_1,C_2)} + \alpha(1-\alpha)\mathrm{LLT}_{(H_1,V_2)} + q\alpha\beta(1-\alpha)\mathrm{LLT}_{(S)}.
\]
Dividing by $(1 - \alpha)$, we conclude that
\[
    \frac{\widetilde{H}_{(D,f)}[X;q,t] - \alpha \widetilde{H}_{(D',f')}[X;q,t]}{1 - \alpha} \equiv \mathrm{LLT}_{(R_1,C_2)} + \alpha \mathrm{LLT}_{(H_1,V_2)} + q\alpha\beta\mathrm{LLT}_{(S)}. \qedhere
\]
\end{proof}

\begin{rmk} One might hope to show Theorem~\ref{thm: second main, s-positivities} for the general case, i.e. $\nu/\lambda$ is in the $m$-th row, where $m\geq 3$ by showing Lemma~\ref{lem: s positivity reduction} for $m\geq 3$. However,  Lemma~\ref{lem: s positivity reduction} is no more true when $m\geq 3$. For example, let $\mu=(2,2,2)$ and $\lambda=(2,2,1,1)$ then $\lambda$ is obtained from $\mu$ moving a cell on the third row to the upper row. Then the coefficient of $t^5$ of $\I_{\lambda,\mu}[X;q,t]$ is $s_{(3,2,1)}[X] + qs_{(2,2,1,1)}[X]$ which cannot be written as a positive linear combination of LLT polynomials of a tuple of skew partitions where there is one, two, two, and one cells of content 1, 2, 3, and 4.
\end{rmk}

We conclude this section with a proof of Butler's conjecture for $t = 1$ (or $q = 1$).

\begin{corollary}\label{Cor: Schur positivity at t=1}
Let $\nu$ be a partition. Let $\lambda, \mu \subseteq \nu$ be two partitions such that $|\nu/\lambda| = |\nu/\mu| = 1$. Then $\I_{\lambda, \mu}[X; q, 1]$ (or $\I_{\lambda, \mu}[X; 1, t]$) is Schur-positive.
\end{corollary}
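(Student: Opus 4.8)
The plan is to reduce the statement entirely to Theorem~\ref{thm: second main, s-positivities} by exploiting how $\widetilde{H}_\mu$ degenerates at $t=1$ together with conjugation symmetry. First, since $\widetilde{H}_\mu[X;q,t]=\widetilde{H}_{\mu'}[X;t,q]$ and $T_\mu(q,t)=T_{\mu'}(t,q)$, a direct substitution gives $\I_{\lambda,\mu}[X;q,t]=\I_{\lambda',\mu'}[X;t,q]$; hence $\I_{\lambda,\mu}[X;1,t]=\I_{\lambda',\mu'}[X;t,1]$, and it suffices to prove that $\I_{\lambda,\mu}[X;q,1]$ is Schur positive for \emph{every} admissible pair $(\lambda,\mu)$.

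For the $t=1$ case I would use the classical specialization
\[
\widetilde{H}_\mu[X;q,1]=\prod_{i\ge 1}\widetilde{H}_{(\mu_i)}[X;q]
\]
(with the convention $\widetilde{H}_{(0)}=1$), which can be read off from the Haglund--Haiman--Loehr monomial formula: at $t=1$ the inter-row inversions between two consecutive rows cancel against the arm-weighted descents, so the sum factors row by row. Each $\widetilde{H}_{(k)}[X;q]$ is a single LLT polynomial of a tuple of $k$ cells with consecutive contents, hence Schur positive. Now write $\lambda=\nu\setminus\{c_\lambda\}$ and $\mu=\nu\setminus\{c_\mu\}$ for the two removable corners, which lie in distinct rows $r_\lambda\neq r_\mu$; after possibly swapping $\lambda$ and $\mu$ (using $\I_{\lambda,\mu}=\I_{\mu,\lambda}$) we may assume $a:=\nu_{r_\lambda}>b:=\nu_{r_\mu}$. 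Then with $P:=\prod_{i\neq r_\lambda,r_\mu}\widetilde{H}_{(\nu_i)}[X;q]$ (Schur positive, being a product of Schur positive symmetric functions) we get $\widetilde{H}_\mu[X;q,1]=P\,\widetilde{H}_{(a)}\widetilde{H}_{(b-1)}$ and $\widetilde{H}_\lambda[X;q,1]=P\,\widetilde{H}_{(a-1)}\widetilde{H}_{(b)}$ (all arguments $[X;q]$). Since $n(\lambda')-n(\mu')=\nu_{r_\mu}-\nu_{r_\lambda}=b-a$ and the quotient $\frac{q^{X}A-q^{Y}B}{q^{X}-q^{Y}}$ depends only on $X-Y$,
\[
\I_{\lambda,\mu}[X;q,1]=P\cdot\frac{q^{n(\lambda')}\widetilde{H}_{(a)}\widetilde{H}_{(b-1)}-q^{n(\mu')}\widetilde{H}_{(a-1)}\widetilde{H}_{(b)}}{q^{n(\lambda')}-q^{n(\mu')}}=P\cdot\I_{\lambda_0,\mu_0}[X;q,1],
\]
where $\nu_0=(a,b)$, $\lambda_0=(a-1,b)$, $\mu_0=(a,b-1)$ (one checks $n(\lambda_0')-n(\mu_0')=b-a$ as well, so the two quotients agree). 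As $\nu_0/\lambda_0$ is the cell $(1,a)$, which lies in the first row, Theorem~\ref{thm: second main, s-positivities} gives that $\I_{\lambda_0,\mu_0}[X;q,t]$ is Schur positive, hence so is $\I_{\lambda_0,\mu_0}[X;q,1]$. Therefore $\I_{\lambda,\mu}[X;q,1]=P\cdot\I_{\lambda_0,\mu_0}[X;q,1]$ is Schur positive, and the $q=1$ case follows from the conjugation identity of the first paragraph.

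The only ingredient that is not already packaged in the paper is the product formula $\widetilde{H}_\mu[X;q,1]=\prod_i\widetilde{H}_{(\mu_i)}[X;q]$; although this is classical, carrying out its proof (the telescoping of inter-row inversions and arm-weighted descents) is where I expect the real work to sit, everything else being bookkeeping of the $q$-powers $T_\lambda,T_\mu$ at $t=1$ plus an appeal to Theorem~\ref{thm: second main, s-positivities}. A self-contained alternative would be to establish the factorization of $\widetilde{H}_{(\mu,f^{\st}_\mu)}[X;q,1]$ within the filled-diagram framework of Section~\ref{Sec: generalization of Macdonald}, by applying the cycling and column exchange rules at $t=1$, where the filling $q^{-\arm}t^{\leg+1}$ collapses to $q^{-\arm}$.
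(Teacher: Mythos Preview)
Your proposal is correct and follows essentially the same route as the paper: factor $\widetilde{H}_\mu[X;q,1]$ row by row (the paper simply cites [DM08] for this), pull out the common factor over rows other than $r_\lambda,r_\mu$, observe that the remaining divided difference only depends on the ratio $T_\lambda/T_\mu$ at $t=1$ and hence reduces to the two-row case, and then invoke Theorem~\ref{thm: second main, s-positivities}; the $q=1$ case is handled by conjugation symmetry in both. Your write-up is in fact a bit more explicit than the paper's about why the two-row reduction is legitimate (the computation $n(\lambda')-n(\mu')=b-a=n(\lambda_0')-n(\mu_0')$), which the paper leaves implicit.
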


\begin{proof}
It is well known that the modified Macdonald polynomials factor at \( t = 1 \): for a partition \( \rho = (\rho_1, \rho_2, \dots) \), we have
\[
    \widetilde{H}_\rho[X; q, 1] = \prod_{i \ge 1} \widetilde{H}_{(\rho_i)}[X; q, 1]
\]
(see \cite{DM08}, for example). Suppose $\mu$ and $\lambda$ differ only at the $i$-th and $j$-th rows. Then we have
\[
    \I_{\lambda, \mu}[X; q, 1] = \prod_{k \neq i, j} \widetilde{H}_{(\mu_k)} \left(\dfrac{T_\lambda \widetilde{H}_{(\mu_i, \mu_j)}[X; q, 1] - T_\mu \widetilde{H}_{(\lambda_i, \lambda_j)}[X; q, 1]}{T_\lambda - T_\mu}\right),
\]
where we abuse the notation for $T_\lambda$ to denote its value at $t = 1$. 

By the above identity, to show Schur positivity of $\I_{\lambda, \mu}[X; q, 1]$, we only need to prove the claim for two-row partitions $\mu$ and $\lambda$, which follows from Theorem~\ref{thm: second main, s-positivities} (even for general $t$). 

By the $(q, t)$-symmetry of modified Macdonald polynomials:
\[
    \widetilde{H}_\mu[X; q, t] = \widetilde{H}_{\mu'}[X; t, q],
\]
the case for $q = 1$ follows.
\end{proof}

\section{Toward Combinatorial formulas for $(q,t)$-Kostka polynomials}\label{Sec: Schur expansions of Macdonald polynomials}

Butler's conjecture (or its partial result, Theorem~\ref{thm: second main, s-positivities}) suggests a method for predicting the Schur expansion of a modified Macdonald polynomial by relating it to another modified Macdonald polynomial whose Schur expansion is better understood. To explain this, let $\nu$ be a partition, and let $\mu, \lambda \subseteq \nu$ be two distinct partitions such that $\nu/\mu = (i-k,j+\ell)$ and $\nu/\lambda = (i,j)$. Suppose we are given a combinatorial formula for the $(q,t)$-Kostka polynomial $\widetilde{K}_{\mu,\rho}(q,t)$ for some partition $\rho$. Then, Butler's conjecture suggests that we may obtain $\widetilde{K}_{\lambda,\rho}(q,t)$ by modifying certain monomials of the form $t^a q^b$ appearing in $\widetilde{K}_{\mu,\rho}(q,t)$, specifically by replacing them with $t^{a+k} q^{b-\ell}$. 


Any hook shape partitions can be obtained from a single-column partition by successively moving the top cell in the first column to the first row, as illustrated in Figure~\ref{fig:two column and hook}. We review a Schur expansion formula for the modified Macdonald polynomials indexed by hooks, discovered by Assaf~\cite{Ass18}, and describe a Schur expansion of $\I_{\mu,\nu}$ for $\nu=(n-k+1,1^{k-1})$ and $\mu=(n-k,1^k)$ (Corollary~\ref{cor: hook}).

\begin{figure}[ht]
    \begin{displaymath}
    \tableau{\\ \ \\ \ \\ \ \\ \ \\ \ }
    \hspace{\cellsize} \raisebox{-4\cellsize}{$\rightarrow$} \hspace{\cellsize}
    \tableau{\\ \\ \ \\ \ \\ \ \\ \ & \ }
    \hspace{\cellsize} \raisebox{-4\cellsize}{$\rightarrow$} \hspace{\cellsize}
    \tableau{\\ \\ \\ \ \\ \ \\ \ & \ & \ }
    \hspace{\cellsize} \raisebox{-4\cellsize}{$\rightarrow$} \hspace{\cellsize}
    \tableau{\\ \\ \\ \\ \ \\ \ & \ & \ & \ }
    \hspace{\cellsize} \raisebox{-4\cellsize}{$\rightarrow$} \hspace{\cellsize}
    \tableau{\\ \\ \\ \\ \\ \ & \ & \ & \ & \ }
  \end{displaymath}
  \caption{Obtaining hook shape partitions from $(1^5)$.}\label{fig:two column and hook}
\end{figure}

For a permutation $w$ with $\iDes(w)=\{i_1,i_2,\dots\}$, the \emph{de-standardization} of $w$ is defined to be the word obtained by changing $1,2,\dots,i_1$ to $1$, $i_1+1,i_1+2,\dots,i_2$ to $2$, and so on. We denote $\dst(w)$ for the de-standardization of $w$. A permutation $w$
is called \emph{super-standard} if its de-standardization $\dst(w)$ is Yamanouchi, i.e. for every $k\in [n]$, the $\dst(w)_k \cdots \dst(w)_n$ has at least as many $i-1$'s as $i$'s for every $i$. For a super-standard word $w$, we say that $w$ has weight $\lambda$ if $\dst(w)$ is of weight $\lambda$. For example, $41523$ is super-standard of weight $(3,2)$ since its de-standardization
\[
\dst(41523) = 21211
\]
is Yamanouchi word of weight $(3,2)$. The set of super-standard words of weight $\lambda$ is denoted by $\SSS(\lambda)$.

For a word $w$ and a letter $x$ not in $w$, define a partitioning $\Gamma_x(w)$ as follows. Break before each index $i$ such that
\[
\begin{cases}
w_i < x &\text{ if } w_1 < x;\\
w_i > x &\text{ if } w_1 > x.
\end{cases}
\]
The map $\gamma_x$ is defined by sending the first letter in each block of $\Gamma_x$ to the end of the block \cite{DF68}. For example, \[
\Gamma_4(5613782) = |5|613|7|82, \text{ and } \gamma_4(5613782)=5136728.
\]
For $k\in[n]$, Assaf defined the bijections $\mathcal{A}_k$ on $\mathfrak{S}_n$ by 
\[
\mathcal{A}_k(w_1 \cdots w_n) = w_1 \cdots w_k \gamma_{w_k}(w_{k+1}\cdots w_n).
\]
Using this map, Assaf gave a combinatorial formula for the Schur expansion of the modified Macdonald polynomials indexed by hook partitions: 

\begin{thm}\cite[Corollary 4.5]{Ass18}\label{thm: Assaf}
For $\mu=(n-k,1^k)$ a hook partition, set $\mathcal{A}_\mu:=\mathcal{A}_{k+1}\cdots\mathcal{A}_{n-1}$. Then we have
\[
\widetilde{H}_\mu[X;q,t] = \sum_{\lambda\vdash n}\left(\sum_{w\in \SSS(\lambda)}q^{\inv_{(\mu,{f^{\st}_{\mu}})}(\mathcal{A}_\mu(w))}t^{\maj_{(\mu,{f^{\st}_{\mu}})}(\mathcal{A}_\mu(w))}\right)s_\lambda[X].
\]
\end{thm}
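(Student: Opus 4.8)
The plan is to derive Assaf's formula from the Haglund--Haiman--Loehr formula (Theorem~\ref{thm: HHL formula}) by exhibiting, for each partition $\lambda\vdash n$, a bijection between $\SSS(\lambda)$ and the set of permutations $w\in\mathfrak{S}_n$ with a fixed descent composition refining $\lambda$, in such a way that $\inv$ and $\maj$ on the standard filling of the hook are controlled. First I would rewrite the HHL expansion as a sum over $\mathfrak{S}_n$ of $\stat_{(\mu,f^{\st}_\mu)}(v)F_{\iDes(v)}$, and then collect the Schur coefficient of $s_\lambda$ by the standard fact that $s_\lambda=\sum_{\substack{v\in\mathfrak{S}_n,\ \iDes(v)\subseteq\Set(\lambda)}}(\text{sign data})\cdots$; more cleanly, one uses that the coefficient of $s_\lambda$ in a symmetric function written in the fundamental basis $\sum_v c_v F_{\iDes(v)}$ equals $\sum_v c_v \langle F_{\iDes(v)}, s_\lambda\rangle$, and $\langle F_S,s_\lambda\rangle$ counts standard Young tableaux of shape $\lambda$ with descent set $S$. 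So the target identity becomes: for each $\lambda$,
\[
\sum_{\substack{v\in\mathfrak{S}_n \\ \iDes(v)=\des(Q)\text{ for some }Q\in\syt(\lambda)}} \hspace{-1em} \stat_{(\mu,f^{\st}_\mu)}(v) \;=\; \sum_{w\in\SSS(\lambda)} q^{\inv(\mathcal{A}_\mu(w))}t^{\maj(\mathcal{A}_\mu(w))}.
\]
The key step is therefore to show that $w\mapsto\mathcal{A}_\mu(w)$ restricted to super-standard words of weight $\lambda$ is a bijection onto a set of permutations realizing exactly the SYT-descent-sets of shape $\lambda$ with the right multiplicity, and that it is compatible with the statistics.

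Next I would analyze the map $\mathcal{A}_\mu=\mathcal{A}_{k+1}\cdots\mathcal{A}_{n-1}$ and the partitioning operators $\gamma_x$. The crucial structural observation is that for the hook $\mu=(n-k,1^k)$, the standard filling $f^{\st}_\mu$ has a very simple form: the cells in the first column above the bottom contribute pure powers of $t$ (since their arm is $0$), while the single nontrivial row contributes powers of $q^{-1}t$; consequently $\maj_{(\mu,f^{\st}_\mu)}$ and $\inv_{(\mu,f^{\st}_\mu)}$ decouple into a ``first column part'' governed by the descents among the first $k+1$ letters read in column order and an ``arm part'' governed by inversions in the long row. I would show that each $\mathcal{A}_j$ for $j$ in the relevant range preserves $\iDes$ (this is implicit in Assaf's machinery via the $\gamma_x$ construction: breaking the word before the appropriate comparisons with $x$ and cycling each block fixes the descent set of the whole word), and that the cumulative effect of $\mathcal{A}_\mu$ converts the ``super-standard'' condition, which is a Yamanouchi/lattice-word condition on $\dst(w)$, into the statement that $v=\mathcal{A}_\mu(w)$ has inverse descent set equal to that of a standard tableau of shape $\lambda$, with each such descent set hit exactly $f^\lambda_{\text{(column-reading)}}$-many times — matching $\langle F_{\iDes(v)},s_\lambda\rangle$.

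I would carry this out in the following order: (1) record the decoupled formula for $\stat_{(\mu,f^{\st}_\mu)}$ on a hook and note $\iDes$-invariance of each $\mathcal{A}_j$; (2) prove that $\mathcal{A}_\mu$ is a bijection from $\SSS(\lambda)$ onto the permutations $v$ with $\iDes(v)$ an SYT-shape-$\lambda$ descent set, counted with the correct multiplicity, by induction on the number of $\mathcal{A}_j$'s applied, tracking how $\gamma_{w_j}$ rearranges the tail; (3) check that under this bijection the HHL monomial $\stat_{(\mu,f^{\st}_\mu)}(v)$ equals $q^{\inv(\mathcal{A}_\mu(w))}t^{\maj(\mathcal{A}_\mu(w))}$ trivially (both sides are the same expression, since $v=\mathcal{A}_\mu(w)$), so the only real content is the bijective/counting claim in (2); (4) assemble via the $\langle F_S,s_\lambda\rangle$ pairing. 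The main obstacle I anticipate is step (2): proving that iterating the $\gamma_x$-based operators exactly transports the lattice-word condition on $\dst(w)$ to the union-of-SYT-descent-sets condition, with correct multiplicities, requires a careful inductive bookkeeping of which letters can begin or end each $\Gamma_x$-block and how this interacts with the recursive structure of standard tableaux (e.g. via the Robinson--Schensted correspondence or a direct insertion argument). I expect that $\mathcal{A}_\mu$ is essentially performing a ``Yamanouchi straightening'' analogous to jeu-de-taquin, and the cleanest route may be to identify $\dst\circ\mathcal{A}_\mu$ with a known crystal-theoretic or RSK operation and then invoke the classical description of $\syt(\lambda)$ descent sets; absent that, a hands-on induction on $n-k$ (the number of operators) will work but is the laborious heart of the argument.
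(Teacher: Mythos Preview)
This theorem is not proved in the paper at all; it is quoted verbatim from Assaf \cite{Ass18} as an input, and the paper instead proves the subsequent theorem about consistency with Butler's conjecture. So there is no ``paper's proof'' to compare against.

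Your proposed argument has a genuine gap at the Schur-extraction step. The pairing $\langle F_S,s_\lambda\rangle$ is not defined (fundamental quasisymmetric functions are not symmetric), and your displayed ``target identity''
\[
\sum_{\substack{v\in\mathfrak{S}_n\\ \iDes(v)=\des(Q)\text{ for some }Q\in\syt(\lambda)}}\stat_{(\mu,f^{\st}_\mu)}(v)\;=\;\sum_{w\in\SSS(\lambda)}\stat_{(\mu,f^{\st}_\mu)}(\mathcal A_\mu(w))
\]
is not the Schur coefficient of $\widetilde H_\mu$: a given subset $S$ is the descent set of standard tableaux of many different shapes, so the left side double-counts and the reduction is wrong. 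The correct mechanism is not ``which descent sets appear'' but \emph{constancy on dual equivalence classes}: one has $\SSS(\lambda)=\{w:P(w)=P_\lambda^{\mathrm{row}}\}$ under RSK, and for any statistic $c:\mathfrak S_n\to\mathbb F$ the identity $\sum_w c(w)F_{\iDes(w)}=\sum_\lambda\bigl(\sum_{w\in\SSS(\lambda)}c(w)\bigr)s_\lambda$ holds \emph{if and only if} $c(w)$ depends only on the recording tableau $Q(w)$. So the real content of the theorem is that $w\mapsto\stat_{(\mu,f^{\st}_\mu)}(\mathcal A_\mu(w))$ is constant on Knuth classes, and that is what Assaf proves via her $D$-graph machinery.

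Your step (2), ``show $\mathcal A_\mu$ bijects $\SSS(\lambda)$ onto permutations with the right $\iDes$ and multiplicity,'' is therefore aiming at the wrong target. Since $\mathcal A_\mu$ preserves $\iDes$, it automatically permutes each $\iDes$-fiber; the question is not where $\SSS(\lambda)$ lands but whether the \emph{pulled-back statistic} respects the finer RSK structure on all of $\mathfrak S_n$. Your observations in (1) about the hook statistics decoupling into a column $\maj$-part and a row $\inv$-part are correct and are indeed used, but they feed into the $Q$-constancy argument, not into a direct counting of descent sets.
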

We reformulate Theorem~\ref{thm: Assaf} in terms of the usual major statistic for words, 
i.e.\ $\maj(w):=\sum_{i\in \Des(w)} i$ (Lemma~\ref{lem: reformulate assaf}). 
Recall also the usual inversion statistic for words, 
i.e.\ $\inv(w):=|\{(i,j): w_i > w_j\}|$. 
Note that both $\maj$ and $\inv$ should not be confused with 
the statistics $\maj_{(D,f)}$ and $\inv_{(D,f)}$ in Definition~\ref{def: stat definition}. 
For a hook partition $\mu = (n-k, 1^k)$, it is straightforward to see that
\begin{equation}\label{eq:xx}
    \maj_{(\mu,f_{\mu}^{\st})}(w) = t^{\maj(w_{[k+1]})},
    \qquad
    \inv_{(\mu,f_{\mu}^{\st})}(w) = q^{\inv(w_{[k+1,n]})}.
\end{equation}

\begin{lem}\label{lem: reformulate assaf}
For $\mu=(n-k,1^k)$, we have
\begin{equation*}
    q^{\inv_{(\mu,{f^{\st}_{\mu}})}(\mathcal{A}_\mu(w))}t^{\maj_{(\mu,{f^{\st}_{\mu}})}(\mathcal{A}_\mu(w))}=t^{\maj(w)}\prod_{i\in \Des(w)\cap[k+1,n-1]}\frac{q^{n-i}}{t^i}
\end{equation*}
for any word $w$.
\end{lem}
\begin{proof}

We proceed by descending induction on $k$. The base case $k = n-1$ follows immediately from \eqref{eq:xx}. Now assume the claim holds for $\mu = (n-k,1^{k})$, and consider the case $\nu = (n-k+1,1^{\,k-1})$. We will establish the following property of $\mathcal{A}_k$: For any word $w$ and denoting $u=\mathcal{A}_k(w)$,
\begin{enumerate}
    \item if $w_k > w_{k+1}$ then we have $\begin{cases}
    \maj(w_{[k+1]}) = \maj(u_{[k]}) + k \\ 
    \inv(w_{[k+1,n]}) = \inv(u_{[k,n]}) - (n-k),
    \end{cases}$ \quad and
    \item if $w_k < w_{k+1}$ then we have $\begin{cases}
    \maj(w_{[k+1]}) = \maj(u_{[k]}) \\ 
    \inv(w_{[k+1,n]}) = \inv(u_{[k,n]}).
    \end{cases}$
\end{enumerate}

Suppose $w_k>w_{k+1}$ then $\maj(w_{[k+1]}) = \maj(u_{[k]}) + k$ follows as $w_{[k]}=u_{[k]}$.  Let $I$ be the set of indices of the first letter of each block of $\Gamma_{w_k}(w_{k+1}\cdots w_n)$, i.e. $I=\{k+1\le i\le n: w_k>w_i\}$. Say $I=\{k+1=i_1<\cdots<i_\ell\}$. Then \[
\Gamma_{w_k}(w_{k+1}\cdots w_n) = |w_{k+1}\cdots w_{i_2-1}|w_{i_2}\cdots w_{i_3 -1}| \cdots |w_{i_\ell}\cdots w_n,
\]
and 
\begin{align*}
    u=\mathcal{A}_k(w)&= w_1\cdots w_k \gamma_{w_k}(w_{k+1}\cdots w_n)\\
    &=w_1\cdots w_k w_{k+2} \cdots w_{i_2-1}w_{k+1} w_{i_2+1} \cdots w_{i_3-1}w_{i_2} \cdots w_{i_\ell+1} \cdots w_n w_{i_\ell}.
\end{align*}
For each $i\in I$, $(w_k,w_i)$ forms an inversion pair in $u_{[k,n]}$ These give $\ell$ inversion pairs. In addition, there are $n-k-\ell$ inversion pairs of the form $(x,w_i)$ in $u_{[k,n]}$, where $i\in I$ and $x$ and $w_i$ are in the same block of $\Gamma_{w_k}(w_{k+1}\cdots w_n)$. Except for these $n-k$ inversion pairs, the others are the same in $w_{[k+1,n]}$ and $u_{[k,n]}$. This proves $\inv(w_{[k+1,n]}) = \inv(u_{[k,n]}) - (n-k)$.

Now suppose $w_k < w_{k+1}$, again $\maj(w_{[k+1]}) = \maj(u_{[k]})$ follows. Let $I=\{i_1<\cdots<i_\ell\}$ be the set of indices of the first letter of each block of $\Gamma_{w_k}(w_{k+1}\cdots w_n)$, i.e. $I=\{k+1\le i\le n: w_k < w_i\}$.  Then again we have 
\begin{align*}
    u=\mathcal{A}_k(w)&= w_1\cdots w_k \gamma_{w_k}(w_{k+1}\cdots w_n)\\
    &=w_1\cdots w_k w_{k+2} \cdots w_{i_2-1}w_{k+1} w_{i_2+1} \cdots w_{i_3-1}w_{i_2} \cdots w_{i_\ell+1} \cdots w_n w_{i_\ell}.
\end{align*}
For each $j\notin I$, $(w_k,w_j)$ forms an inversion pair in $u_{[k,n]}$. These contribute $n-k-\ell$ inversion pairs. On the other hand, there are also $n-k-\ell$ inversion pairs in $w_{[k+1,n]}$ of the form $(x,w_i)$, where $i\in I$ and $x$ and $w_i$ are in the same block of $\Gamma_{w_k}(w_{k+1}\cdots w_n)$. These pairs are no longer inversions in $u_{[k,n]}$. This proves $\inv(w_{[k+1,n]}) = \inv(u_{[k,n]})$.

For any word $w$ and denoting $u=\mathcal{A}_{\mu}(w)$, we have $w_k<w_{k+1}$ if and only if $u_k<u_{k+1}$ as $w_{[k+1]}=u_{[k+1]}$. This shows the desired claim for $\nu=(n-k+1,1^{k-1})$.
\end{proof}
\begin{corollary}\label{cor: hook}
    For hook partitions $\mu=(n-k,1^k)$ and $\nu=(n-k+1,1^{k-1})$  we have
\[
\I_{\mu,\nu}[X;q,t] = \sum_{\lambda\vdash n}\sum_{\substack{w\in \SSS(\lambda)\\ w_k<w_{k+1}}}\left(t^{\maj(w)}\prod_{i\in \Des(w)\cap[k+1,n-1]}\frac{q^{n-i}}{t^i}\right)s_\lambda[X].
\]
\end{corollary}
\begin{proof}
By Lemma~\ref{lem: reformulate assaf} we have
\begin{align*}
    \widetilde{H}_\mu[X;q,t] &= \sum_{\lambda\vdash n}\sum_{w\in \SSS(\lambda)}\left(t^{\maj(w)}\prod_{i\in \Des(w)\cap[k+1,n-1]}\frac{q^{n-i}}{t^i}\right)s_\lambda[X]\\
  \widetilde{H}_\nu[X;q,t] &= \sum_{\lambda\vdash n}\sum_{w\in \SSS(\lambda)}\left(t^{\maj(w)}\prod_{i\in \Des(w)\cap[k,n-1]}\frac{q^{n-i}}{t^i}\right)s_\lambda[X].
\end{align*}
Now the proof follows as $\frac{T_{\nu}}{T_{\mu}}=\frac{q^{n-k}}{t^k}$.
\end{proof}

As an illustration, Table~\ref{table: hooks of length 4} summarizes how the $(q,t)$-statistics change step by step as we move cells from $(1^4)$ to obtain all hook shapes of size $4$.
Dots on the letters of each word $w \in \SSS(\lambda)$ indicate the descent positions. 
\begin{table}[ht]
    \begin{tabular}{|cc|cc|cc|cc|c|}\hline
    $\lambda$& $w\in \SSS(\lambda)$ & $(1,1,1,1)$ && $(2,1,1)$ && $(3,1)$ && $(4)$ \\\hline
    
    (4)&$1234$ & 1 && 1 && 1 && 1 \\ \hline

    $(3,1)$&$12\dot{4}3$ & $t^3$ & $\rightarrow$ & $q$ && $q$ && $q$ \\
    &$1\dot{4}23$ & $t^2$ && $t^2$ & $\rightarrow$ & $q^2$ && $q^2$ \\
    &$\dot{4}123$ & $t$ && $t$ && $t$ & $\rightarrow$ & $q^3$\\ \hline

    $(2,2)$ &$\dot{3}1\dot{4}2$ & $t^4$ & $\rightarrow$ & $qt$ && $qt$ & $\rightarrow$ & $q^4$ \\ 
    &$3\dot{4}12$ & $t^2$ && $t^2$ & $\rightarrow$ & $q^2$ &&   $q^2$ \\ \hline
    
    $(2,1,1)$ &$1\dot{4}\dot{3}2$ & $t^5$ &$\rightarrow$& $qt^2$ &$\rightarrow$& $q^3$ && $q^3$ \\ 
    &$\dot{4}1\dot{3}2$ & $t^4$ &$\rightarrow$& $qt$ &&$qt$   &$\rightarrow$& $q^4$ \\ 
    &$\dot{4}\dot{3}12$ & $t^3$ && $t^3$ &$\rightarrow$& $q^2t$ &$\rightarrow$& $q^5$ \\ \hline
    
    $(1,1,1,1)$&$\dot{4}\dot{3}\dot{2}1$ & $t^6$ &$\rightarrow$& $qt^3$ &$\rightarrow$& $q^3t$ &$\rightarrow$& $q^6$ \\ \hline
    \end{tabular}
    \caption{Table for $\widetilde{K}_{\lambda,\mu}(q,t)$ for hook partitions $\mu$.}
    \label{table: hooks of length 4}
\end{table} 

Substituting $q = t = 1$ into Corollary~\ref{cor: hook}, and using 
Corollary~\ref{Cor: h21111 at q=t=1}, we obtain
\begin{equation}\label{eq: dfdfss}
   h_{(2,1^{\,n-2})}
   = \sum_{\lambda \vdash n}
     \bigl|\{\, w \in \SSS(\lambda) : w_k < w_{k+1} \,\}\bigr|
     \, s_\lambda[X].
\end{equation}
We now give a direct explanation of \eqref{eq: dfdfss}, independent of 
Corollary~\ref{Cor: h21111 at q=t=1}.  
For each $w \in \SSS(\lambda)$, associate a standard Young tableau of shape 
$\lambda$ as follows.  
Consider $\dst(w)$, and fill the shape $\lambda$ so that the entries in the 
$i$-th row are precisely those $j$ for which $\dst(w)_{\,n+1-j} = i$.

Under this correspondence, the condition $w_k < w_{k+1}$ holds if and only if 
$n-k+1$ appears to the right of $n-k$ in the associated standard Young tableau.  
Therefore, the right-hand side of \eqref{eq: dfdfss} becomes
\begin{equation}\label{10}
\sum_{\lambda \vdash n}
    \bigl|\{\, T \in \syt(\lambda) : \text{$n-k+1$ is to the right of $n-k$} \,\}\bigr|
    \, s_{\lambda}[X].
\end{equation}
For each $T \in \syt(\lambda)$ such that $n-k+1$ is to the right of $n-k$, we may identify it with a sequence
\[
\emptyset=\nu^{(0)}\subset \nu^{(1)}\subset \cdots \subset \nu^{(n-1)}=\lambda
\]
of partitions, where the skew shape $\nu^{(i+1)}/\nu^{(i)}$ is a horizontal strip of length $2$ if $i=n-k-1$, and a single cell otherwise.
The correspondence is defined as follows: for $i< n-k$, let $\nu^{(i)}$ be the subshape of $T$ occupied by the entries $1,\dots,i$, while for $i\ge n-k$, let $\nu^{(i)}$ be the subshape occupied by the entries $1,\dots,i+1$.
Thus, \eqref{10} equals $h_{(2,1^{n-2})}$ by Pieri’s rule \cite[Theorem 7.15.7]{StanleyEC2}.

\section{Future questions}

Various questions naturally arise from this work.

\begin{question}
    Construct a dual equivalence graph on Butler permutations to prove Butler's conjecture.
\end{question}

Assaf introduced \emph{dual equivalence graphs} as a universal method for establishing Schur positivity of a quasisymmetric function \cite{Ass15}. It is our hope that by examining $\frac{n!}{2}$ permutations, rather than the whole $n!$ permutations, we can find a path to construct the desired dual equivalence graph. Constructing dual equivalence graphs on Butler permutations would then provide a proof of Schur positivity for $\I_{\lambda, \mu}[X; q, t]$, as stated in Theorem~\ref{thm: first main, F-expansion}.

\begin{question}
    Let $\nu$ be a partition and let $\mu^{(1)}, \dots, \mu^{(k)} \subseteq \nu$ be $k$ distinct partitions such that $|\nu/\mu^{(1)}| = \cdots = |\nu/\mu^{(k)}| = 1$. Find a positive $F$-expansion formula for the Macdonald intersection polynomial $\I_{\mu^{(1)}, \dots, \mu^{(k)}}[X; q, t]$, for $k \geq 3$.
\end{question}

In the companion paper \cite{KLO22+}, we show
\begin{equation}\label{eq: sf to sf}
    \dfrac{1}{T_{\bigcap_{i=1}^k \mu^{(i)}}}e_{n-k}^\perp\left(\I_{\mu^{(1)}, \dots, \mu^{(k)}}[X; q, t]\right) = D_{k-1}[X; q, t],
\end{equation}
where
\[
   D_n[X; q, t] := \sum_{\lambda \in \delta_n} \sum_{T \in \syt(\lambda + (1^n)/\lambda)} t^{|\delta_n/\lambda|} q^{\dinv(T)} F_{\dd(T)}[X].
\]
Here $\delta_n=(n-1,\dots,1)$ is a staircase partition and $D_n[X;q,t]$
is the $F$-expansion formula for $\nabla e_n$ by the Shuffle theorem \cite{CM18}. Equation \eqref{eq: sf to sf} shows that the Shuffle formula $D_{k-1}[X; q, t]$ arises in certain $F$-coefficients of the Macdonald intersection polynomial $\I_{\mu^{(1)}, \dots, \mu^{(k)}}[X; q, t]$, suggesting that finding a full $F$-expansion is a challenging problem.

\section*{acknowledgement}
We are grateful to François Bergeron for bringing the Science Fiction conjecture to our attention. We also thank James Haglund and Brendon Rhoades for helpful conversations, and Foster Tom for sharing his SAGE code for computing LLT polynomials. We thank the anonymous reviewer for their careful reading and valuable suggestions, which significantly improved the paper.  D. Kim was supported by the National Research Foundation of Korea (NRF) grant funded by the Korean government (MEST) (No. 2019R1A6A1A10073437). S. J. Lee was supported by the National Research Foundation of Korea (NRF) grant funded by the Korean government (MSIT) (No.0450-20240021). J. Oh was supported by NRF grant RS-2025-16067413 and a KIAS Individual Grant (CG083401, HP083401) at the Korea Institute for Advanced Study.

\printbibliography

\end{document}